\newfont{\sheaf}{eusm10 scaled\magstep1}
\newtheorem{definition}{Definition}[section]
\newtheorem{proposition}{Proposition}[section]
\newtheorem{lemma}[proposition]{Lemma}
\newtheorem{theorem}[proposition]{Theorem}
\newtheorem{remark}{Remark}[section]
\DeclareMathOperator{\Hilb}{Hilb}
\DeclareMathOperator{\Div}{Div}
\DeclareMathOperator{\PGL}{PGL}
\DeclareMathOperator{\Sing}{Sing}
\DeclareMathOperator{\Aut}{Aut}
\DeclareMathOperator{\Pic}{Pic}
\DeclareMathOperator{\Sec}{Sec}
\title[NOTES] {Geometry of  genus 8 Nikulin surfaces and rationality of their moduli}
  \author{Alessandro Verra}
\address{Dipartimento di Matematica e Fisica, Universit\'a Roma Tre,
Largo San Leonardo Murialdo 1 00146 Roma, Italy}
\email{sandro.verra@gmail.com}
 \thanks{Research supported by the PRIN Project  'Geometry of Algebraic Varieties' and by INdAM-GNSAGA}
\begin{document} \maketitle 

 \section{\small Introduction}
 In this note we study Nikulin surfaces of genus $8$ and their moduli. As typical at least in low genus, the family of surfaces to be investigated sits  in a fascinating system of relations to other known geometric families.  Our aim is to unveil one of these relations, namely that occurring between the moduli of Nikulin surfaces of genus 8 and the Hilbert scheme of rational sextic curves in the Grassmannian $G(1,4)$. We will work over an algebraically closed field $k$, $char \ k = 0$. \par
  A Nikulin surface of genus $g$ will be a K3 surface $S$ endowed with a pseudoample,  primitive polarization $\mathcal L$ of genus $g$ and a line bundle $\mathcal O_S(M)$ such that $2M \sim N$, where $N$ is the disjoint union of $8$ copies of $\mathbf P^1$. In particular one has $g \geq 2$. \par The corresponding  moduli space of Nikulin surfaces of genus $g$ is known to be an equidimensional quasi projective variety of dimension $11$.   Its irreducible components are essentially characterized by the intersection index of $\mathcal L$ and $\mathcal O_S(2M)$,  \cite{H, GS, SvG}. \par
 \it We will assume throughout all the paper that the intersection index of $\mathcal L$ and $\mathcal O_S(M)$ is zero. \rm \par The latter  condition defines an integral component of the moduli space which exists for every $g$, differently from the other possible conditions. Moreover this is the unique irreducible component if $g \equiv 0 \mod 4$.
 On the other hand the birational structure of it, when $g$ varies, is quite unknown. Recently its unirationality has been proven for $g \leq 7$,  \cite{FV, FV1}, while no result on the Kodaira dimension seems to be known for $g \geq 9$.   The main result proved in this paper is the following
  \\  \\ \noindent  {THEOREM  I} {\it The moduli space of genus $8$ Nikulin surfaces is rational.} \rm \\ \par
As a consequence, it is natural to ask wether the rationality of these moduli spaces could be true for lower genus $g \leq 7$. Let us briefly describe the method of proof and how it is related to a special family of rational normal sextic curves in the Pl\"ucker embedding of the Grassmannian $G(1,4)$. \par
Assume that $S$ is a general Nikulin surface of genus $8$, then $\mathcal H := \mathcal L(-M)$ is a very ample polarization of genus $6$. Moreover we have  $\mathcal H(-M) \cong \mathcal O_S(A)$, where $A$ is a copy of $\mathbf P^1$ and $A$ has degree $6$ with respect to $\mathcal H$. \par The main idea behind the proof  is to consider the Mukai bundle $\mathcal E \to S$ defined by the polarization $\mathcal H$. As is well known this is a stable rank  $2$ vector bundle with $h^0(\mathcal E) = 5$ and $det \ \mathcal E \cong \mathcal H$. Let $G(1,4) \subset \mathbf P^9$ be the Pl\"ucker embedding of the Grassmannian of lines  of $\mathbf PH^0(\mathcal E)^*$. It turns out that $\mathcal H$ defines an embedding $S \subset \mathbf P^6$, factoring through the classifying map $S \to G(1,4)$ defined by $\mathcal E$. Furthermore, it is also true that the restriction map $r: H^0(\mathcal O_{G(1,4)}(1)) \to H^0(\mathcal H)$ is surjective. Let $\mathbf P^6 \subset \mathbf P^9$ be the linear embedding induced by the dual of $r$, then we have
$$
A \subset S \subset G(1,4) \cap \mathbf P^6 \subset \mathbf P^9.
$$
\par
If $S$ is general  $A$ is embedded as a rational normal sextic and the restriction $\mathcal E_A := \mathcal E \otimes \mathcal O_A$ is the sheaf $\mathcal O_{\mathbf P^1}(3) \oplus \mathcal O_{\mathbf P^1}(3)$. We  will consider the ruled surface $ \mathbf P\mathcal E^*_A$ and its tautological embedding $R$ in $\mathbf P^7 := \mathbf PH^0(\mathcal E_A)^*$. Let $\mathbf P^4 := \mathbf PH^0(\mathcal E)^*$ then the restriction $H^0(\mathcal E) \to H^0(\mathcal E_A)$ uniquely defines a  linear projection
$$
p: R \to \mathbf P^4.
$$ 
\par For a Nikulin surface $S$ the map $p$ is not a generic linear projection of $R$.   As we will see the projected surface $p(R)$ is a scroll with a curve of double points and this is actually a rational normal quartic curve $B$. More precisely $p(R)$ is the complete intersection of a quadric through $B$ and $Sec \ B$, the cubic hypersurface of the bisecant lines to $B$. \par
On the other hand this realization of $p(R)$ is used to show that $A$ admits a one dmensional family  of bisecant lines which are contained in $G(1,4)$. Let $T$ be the intersection of the linear span of $A$
with $G(1,4)$: we will show that $T$ is a smooth threefold. Let us say that $A \subset T$ is an \it embedding of special type. \rm  Along the paper we describe the special geometry of it and its relations to Nikulin surfaces. One can summarize all that as follows.  \\  \\ \noindent  {THEOREM  II} {\it    Let $A \subset  T$ be a general embedding of special type and let $\mathcal I_A$ be its ideal sheaf. Then:
  \begin{itemize}
  \item[$\circ$] A general member of $\vert \mathcal I_A(2) \vert$ is a Nikulin surface of genus $8$.
  \item[$\circ$] For a general Nikulin surface  $S$ of genus $8$ there exists a special embedding $A \subset T$ such that
 $$
 S \in  \vert \mathcal I_A(2) \vert.
 $$ 
 \end{itemize}   } \rm \par

This is the starting point for a quick  conclusion of the proof of theorem I, saying that the moduli space of genus 8 Nikulin surfaces is rational. 
      \par We consider this note as a step in the study of moduli of Nikulin surfaces of low genus by similar geometric methods. As is well known a general member of the Hilbert scheme of $S$ in $G(1,4)$ is a general K3 surface of genus $6$. Furthermore, by Mukai linear section theorem, a general K3 surface of genus $7 \leq g \leq 10$ can be constructed as a linear section of a suitable homogeneous space $\mathbb S_g$ and  such a realization is unique up to $Aut \ \mathbb S_g$. \par Describing Mukai realizations of Nikulin surfaces of genus $g = 9, 10, 11$ appears to be, as in the case $g = 8$, very rich of geometric connections and interesting for the study of the corresponding moduli spaces. \par More in general the moduli space of Nikulin surfaces of genus $g$ deserves to be studied.  
In particular it could be tempting to compare the family of the moduli spaces of Nikulin surfaces of genus $g$ and the family of the moduli spaces of Enriques surfaces endowed with a genus $g$ polarization, since the two classes of surfaces admit some analogies. \par For instance, in the case of Enriques surfaces, a recent theorem of Hulek and Gritsenko, \cite{GH}, shows that the irreducible components of the moduli spaces, when $g$ varies,  distribute in finitely many birational classes: what happens in the case of Nikulin surfaces? Hopefully further work will be developed about the previous questions in the future. \par  
 \section{\small Notations and preliminary results} A \it K3 surface of genus $g$ \rm is a pair $(S, \mathcal L)$ such that $S$ is a K3 surface and $\mathcal L := \mathcal O_S(C)$ is a pseudoample, primitive element of $Pic \ S$ such that $C^2 = 2g - 2$. The moduli space of K3 surfaces of genus $g$ is an irreducible, quasi-projective variety of dimension $19$. It will be denoted as usual by $\mathcal F_g$. \par
As already outlined in the introduction a \it Nikulin surface of genus $g$ \rm will be a triple $(S, \mathcal L, \mathcal M)$ such that: \medskip \par
\begin{enumerate} \it
\item $(S, \mathcal L)$ is a K3 surface of genus $g$.
\item $\mathcal M := \mathcal O_S(M)$, where $2M \sim N_1 + \dots + N_8$ and  $N_1 \dots N_8$ are disjoint copies of $\mathbf P^1$.
\item $\mathcal L$ and $\mathcal M$ are orthogonal in $Pic \ S$.
\end{enumerate}
\medskip \par
Note that $N_iN_j = -2\delta_{ij}$ and that $M^2 = -4$. We fix the notation
$$
N := N_1 + \dots + N_8.
$$
\begin{lemma} $h^i(\mathcal O_S(M)) = 0$ for $i = 0, 1, 2$. \end{lemma}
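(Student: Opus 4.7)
The plan is to combine Riemann-Roch on the K3 surface $S$ with vanishing statements at the extremes of the cohomology. Since $K_S\cong\mathcal{O}_S$, Riemann-Roch gives
$$\chi(\mathcal{O}_S(M)) = 2 + \tfrac{1}{2}M^2 = 2-2 = 0,$$
and Serre duality identifies $h^2(\mathcal{O}_S(M))$ with $h^0(\mathcal{O}_S(-M))$. It therefore suffices to show $h^0(\mathcal{O}_S(M)) = h^0(\mathcal{O}_S(-M)) = 0$, after which $h^1(\mathcal{O}_S(M)) = -\chi(\mathcal{O}_S(M)) = 0$ follows automatically.

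The vanishing $h^0(\mathcal{O}_S(-M)) = 0$ is immediate: an effective divisor $F \sim -M$ would give $2F + N \sim 0$, which is absurd since $2F+N$ is a nonzero effective divisor and $h^0(\mathcal{O}_S) = 1$ on the K3 surface $S$.

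For $h^0(\mathcal{O}_S(M)) = 0$ I would argue by contradiction. Suppose $E \geq 0$ with $E \sim M$. From $2M\sim N$ one has $M\cdot N_j = \tfrac{1}{2}N\cdot N_j = -1$. Write $E = \sum_i a_i N_i + E'$, where $a_i \geq 0$ and $E' \geq 0$ contains no $N_i$ as a component, so that $E'\cdot N_j\geq 0$. Intersecting with $N_j$ gives $-1 = -2a_j + E'\cdot N_j$, which forces $a_j \geq 1$ for every $j$. But then
$$2E - N = \sum_i (2a_i - 1)N_i + 2E'$$
is a nonzero effective divisor linearly equivalent to zero, contradicting $h^0(\mathcal{O}_S) = 1$.

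The only delicate step is the intersection-theoretic decomposition of $E$ and the conclusion that each multiplicity $a_j$ is at least $1$; this rests on $N_j^2 = -2 < 0$ together with $M\cdot N_j = -1$. The remainder of the argument is bookkeeping with the disjoint $(-2)$-curves $N_i$, and no genus-dependent input is used, so the lemma holds for every Nikulin surface of genus $g$.
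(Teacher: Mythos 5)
Your proof is correct, and its overall skeleton (Riemann--Roch gives $\chi(\mathcal O_S(M))=0$, Serre duality reduces $h^2$ to $h^0(\mathcal O_S(-M))$, and the two $h^0$ vanishings do the rest) is the same as the paper's. The one place where you genuinely diverge is the key step $h^0(\mathcal O_S(M))=0$. The paper argues that $N$ is isolated (a disjoint union of $(-2)$-curves is rigid), so an effective $E\sim M$ would force the equality of divisors $2E=N$, which is impossible because $N$ is reduced; this is a two-line argument but quietly uses $h^0(\mathcal O_S(N))=1$. You instead compute $M\cdot N_j=-1$, decompose $E=\sum a_iN_i+E'$ with $E'$ containing no $N_i$, and use $N_j^2=-2$ to force $a_j\ge 1$ for all $j$, so that $2E-N$ is a nonzero effective divisor linearly equivalent to zero. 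Your route is slightly longer but entirely intersection-theoretic and does not invoke the rigidity of $N$; the paper's is shorter but leans on that rigidity. Both are complete, and both, as you note, are independent of the genus $g$.
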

\begin{proof} Since $N$ is isolated and $2M \sim N$, we have $h^0(\mathcal O_S(M)) \leq 1$. Let $h^0(\mathcal O_S(M)) = 1$, then $M \sim E$ for an effective divisor $E$ and
 $2E = N$. Since $N$ is reduced this is impossible,  hence $h^0(\mathcal O_S(M)) = 0$. Moreover we have $h^0(\mathcal O_S(-2M)) = h^0(\mathcal O_S(-N)) = 0$ so that $h^0(\mathcal O_S(-M)) = 0$. This implies $h^2(\mathcal O_S(M)) = 0$. Finally $h^1(\mathcal O_S(M)) = 0$ follows from $\chi(\mathcal O_S(M)) = 0$. \end{proof}
We recall that  the \it Nikulin lattice \rm  is an even lattice of rank $8$ generated by $n_1 \dots n_8$ and $m = \frac 12(n_1 + \dots + n_8)$, the product of which is uniquely defined by the condition $n_in_j = -2\delta_{ij}$, \cite{M} 5.3. Notice also that the set of generators $\lbrace n_1, \dots,  n_8, m \rbrace$ is  unique up to multiplying each element by $-1$. Let $i = 1 \dots 8$, clearly $\mathcal O_S(M)$ and $\mathcal O_S(N_i)$ generate a Nikulin lattice in $Pic \ S$. Moreover this set of generators is uniquely defined by the condition $h^0(\mathcal O_S(N_i)) = 1$. We will denote such a lattice by $\mathbb L_S$ and say that $\mathbb L_S$ is the \it Nikulin lattice \rm of $(S, \mathcal L, \mathcal M)$. The next property is well known
\begin{theorem} For a general Nikulin surface of genus $g$ one has  $$ Pic \ S = \mathbb Z \mathcal L \oplus \mathbb L_S. $$ \end{theorem}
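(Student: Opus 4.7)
The plan is to prove this by a standard Noether--Lefschetz argument in the lattice--polarized K3 framework. Let $\Lambda = \mathbb{Z}\ell \oplus \mathbb{L}$, where $\ell$ is a class of square $2g-2$ and $\mathbb{L}$ is the abstract Nikulin lattice, with $\ell$ orthogonal to $\mathbb{L}$; this is a lattice of rank $9$ and signature $(1,8)$. The first step is to verify that $\Lambda$ admits a primitive embedding into the K3 lattice $\Lambda_{K3} = U^{\oplus 3} \oplus E_8(-1)^{\oplus 2}$. The Nikulin lattice $\mathbb{L}$ is known to embed primitively into $E_8(-1)^{\oplus 2}$, and an analysis of its discriminant form (which is $(\mathbb{Z}/2)^6$ with a known quadratic form) together with Nikulin's gluing theory for overlattices produces a primitive embedding $\Lambda \hookrightarrow \Lambda_{K3}$ whose orthogonal complement $T = \Lambda^{\perp}$ has signature $(2,11)$.

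Next, I would identify the moduli space of Nikulin surfaces of genus $g$ with the arithmetic quotient $\Gamma \backslash \mathcal{D}_T$, where $\mathcal{D}_T$ is the period domain associated to $T$ and $\Gamma$ is a suitable subgroup of $O(T)$. The domain $\mathcal{D}_T$ is a connected, $11$--dimensional Type IV Hermitian symmetric space, so this matches the known dimension of the moduli space. This identification uses the surjectivity of the period map and the Torelli theorem for K3 surfaces, together with the observation made after Lemma~1.1 that the ordered decoration $(\mathcal{L}, \mathcal{M}, N_1, \dots, N_8)$ pins down a lattice embedding $\Lambda \hookrightarrow \Pic S$ uniquely up to the finite ambiguity described in the paper.

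The core step is then the Noether--Lefschetz argument. For any class $v \in T$ with $v^2 \geq -2$, the locus in $\mathcal{D}_T$ on which $v$ becomes of Hodge type $(1,1)$ is a proper analytic subvariety of $\mathcal{D}_T$; since $T$ contains only countably many such $v$, the union of these Noether--Lefschetz divisors is a countable union of proper closed subsets, and its complement is nonempty. At any period point outside this union, the transcendental lattice is exactly $T$, hence $\Pic S = \Lambda = \mathbb{Z}\mathcal{L} \oplus \mathbb{L}_S$. Finally one checks that at such a general point the eight $(-2)$--curves $N_i$ remain effective (they are represented by the unique effective $(-2)$--class in their numerical class because $h^0(\mathcal{O}_S(N_i)) = 1$ is an open condition on the Nikulin locus), so that $\mathbb{L}_S$ is literally the lattice spanned by $\mathcal{O}_S(M)$ and the $\mathcal{O}_S(N_i)$, as claimed.

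The main obstacle is the lattice bookkeeping of the first paragraph: constructing the primitive embedding $\Lambda \hookrightarrow \Lambda_{K3}$ with the right orthogonal complement, and verifying that the resulting moduli interpretation produces a single irreducible $11$--dimensional component. Once those ingredients are in place, the density of maximal--transcendental--rank points in $\mathcal{D}_T$ is a routine consequence of the Baire category theorem applied to countably many proper analytic subvarieties.
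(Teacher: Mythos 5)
The paper offers no proof of this statement at all: it is introduced with ``The next property is well known'' and implicitly delegated to the references (Morrison, van Geemen--Sarti, Garbagnati--Sarti), where exactly the lattice-polarized period-domain argument you describe is carried out. So your proposal is not a different route but a reconstruction of the standard one, and it is essentially correct: the rank--$9$, signature--$(1,8)$ lattice $\Lambda = \mathbb{Z}\ell \oplus \mathbb{L}$ embeds primitively into $\Lambda_{K3}$ by Nikulin's criterion (the discriminant group has length $\leq 7 + 1 \leq 22 - 9 - 2$), the orthogonal complement has signature $(2,11)$, the associated Type IV domain has dimension $11$ matching the moduli space, and the very general period point lies off the countable union of Noether--Lefschetz hyperplanes.

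Two small points to tighten. First, in the Noether--Lefschetz step you should not restrict to $v \in T$ with $v^2 \geq -2$: if $\Pic S \supsetneq \Lambda$ then some nonzero class of $T$ (the $T$-component of an extra algebraic class, cleared of denominators) is of type $(1,1)$, but that class can have arbitrarily negative square; you must therefore avoid $v^{\perp}$ for \emph{every} nonzero $v \in T$. This costs nothing, since $v^{\perp} \cap \mathcal{D}_T$ is a proper analytic subset for any nonzero $v$. Second, the Baire argument yields the statement for the \emph{very} general member (complement of countably many divisors), which is the standard reading of ``general'' here; and the final verification that the classes $n_i$ are represented by irreducible disjoint $(-2)$-curves is most cleanly done by exhibiting one Nikulin surface in the component (e.g.\ via the symplectic-involution quotient construction of Section~2) and invoking openness of these conditions together with irreducibility of the moduli space, rather than arguing pointwise at the general period.
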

It easily follows from the theorem and the preceeding remarks that the assignement $(S, \mathcal L, \mathcal M) \longrightarrow (S, \mathcal L)$ induces a generically injective rational map from the moduli space of  Nikulin surfaces of genus $g$ to $\mathcal F_g$. We will say that the image of this map is the \it Nikulin locus \rm in $\mathcal F_g$ and denote it by
$$
\mathcal F^N_g.
$$
\par
Our aim is to show that $\mathcal F^N_g$ is rational for $g = 8$. To this purpose let us summarize some geometry of  projective models a Nikulin surface $(S, \mathcal L, \mathcal M)$ of genus $g$, cfr. \cite{GS, SvG}.
To begin we have the commutative diagram:
$$
\begin{CD}
{\tilde S'} @>{\nu'}>> {\tilde S} \\
@V{\pi'}VV @V{\pi}VV \\
{S} @>>{\nu}> {\overline S} \\
\end{CD}
$$ 
where $\pi'$ is the double covering defined by $N$ and $\nu$ is the contraction of $N$. Let $E_i = \pi'^{-1}(N_i)$, $i = 1 \dots 8$, then $E_i$ is an exceptional line on the smooth surface $\tilde S'$.
It turns out that $\nu'$ is the contraction of $E_1 + \dots + E_8$ and that $\tilde S$ is a minimal K3 surface. Moreover $\pi$ is the quotient map of a symplectic involution $\iota: \tilde S \to \tilde S$ and its $8$ fixed points are 
$\nu'(E_1) \dots \nu'(E_8)$. Then it follows that $\pi$  is branched exactly on the even set of nodes
$$
o_1 := \nu(N_1) \ , \ \dots \ , \ o_8 := \nu (N_8)
$$
of $\overline S$, in particular $\Sing \ \overline S = \lbrace o_1 \dots o_8 \rbrace$. On $\tilde S$ we fix the polarization
$$
\tilde {\mathcal L } := \pi^*\overline { \mathcal L},
$$
where $\overline {\mathcal L}$ is the line bundle $\nu_* \mathcal L$. Then $(\tilde S, \tilde {\mathcal L})$ is a K3 surface of genus $2g - 1$ and $\iota^* \tilde {\mathcal L} \cong \tilde {\mathcal L}$. The action of $\iota$
on $H^0(\tilde {\mathcal L})$ can be described as follows:
$$
H^0(\tilde {\mathcal L}) = \pi^* H^0(\overline {\mathcal L}) \oplus H^0(\mathcal I_f \otimes \tilde {\mathcal L}),
$$
where $f = \pi^{-1}(\Sing \overline S)$ is the set of the fixed points of $\iota$ and $\mathcal I_f$ is its ideal sheaf in $\tilde S$. The previous summands respectively are the $+1$ and $-1$ eigenspaces of $\iota$.
We have $\dim H^0(\overline {\mathcal L}) = g + 1$ and $\dim H^0(\mathcal I_f \otimes \tilde {\mathcal L}) = g-1$. \par Now let us consider the rational  map $f_{\tilde {\mathcal L}}: \tilde S \to \mathbf P^{2g-2}Ê:=  \mathbf PH^0(\tilde {\mathcal L})^*$ defined by $\tilde {\mathcal L}$ and the linear subspaces $\mathbf P^g := \mathbf PH^0(\tilde {\mathcal L})^*$ and $\mathbf P^{g-2} := \mathbf PH^0(\mathcal I_f \otimes \tilde {\mathcal L})^*$ of
$\mathbf P^{2g-1}$ defined by the previous direct sum.  Then we can add to the picture the following commutative diagram:
$$
\begin{CD}
{S} @<{\pi' \circ \nu^{-1}}<< {\tilde S} @>{\pi}>> {\overline S} \\
@VV{f_{\mathcal L(-M)}}V    @V{f_{\tilde {\mathcal L}}}VV  @V{f_{\overline {\mathcal L}}}VV \\
{\mathbf P^{g-2}} @<{\pi_-}<< {\mathbf P^{2g-1}} @>{\pi_+}>> {\mathbf P^g} \\
\end{CD}
$$
Here $\pi_+$ and $\pi_-$ respectively are the linear projections of centers $\mathbf P^{g-2}$ and $\mathbf P^g$. Notice that $\iota$ acts on $\mathbf P^{2g-1}$ as a projective involution and that $\mathbf P^{g-2}$ and
$\mathbf P^g$ are its projectivized eigenspaces. It is easy to see that the vertical maps  are defined, from left to right,  by the line bundles $\mathcal L(-M)$, $\tilde {\mathcal L}$, $\overline {\mathcal L}$. We omit any further detail. From now on we fix the notations
$$
\mathcal H := \mathcal L(-M) \ , \ \mathcal A := \mathcal L(-2M).
$$
We will use the following well known facts.
\begin{proposition} For a general Nikulin surface of genus $g \geq 5$ the line bundle $\mathcal H$ is very ample. \end{proposition}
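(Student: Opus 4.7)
The plan is to apply Saint-Donat's criterion for very ampleness of a nef line bundle on a K3 surface, exploiting the description $\mathrm{Pic}(S)=\mathbb{Z}\mathcal{L}\oplus\mathbb{L}_S$ furnished by Theorem 2.2 to reduce the check to a short finite analysis inside the rank-$8$ negative-definite Nikulin lattice.

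First I would record the numerics. Since $2M\sim N$ and $N_i^2=-2$ one has $M\cdot N_i=-1$, while pseudoampleness of $\mathcal{L}$ combined with $\sum \mathcal{L}\cdot N_i=2\,\mathcal{L}\cdot M=0$ forces $\mathcal{L}\cdot N_i=0$ for every $i$. Hence
\[
\mathcal{H}^2=\mathcal{L}^2+M^2=2g-6,\qquad \mathcal{H}\cdot N_i=1,\qquad \mathcal{H}\cdot\mathcal{L}=2g-2,
\]
so for $g\geq 5$ we are in the regime $\mathcal{H}^2\geq 4$ in which Saint-Donat applies. Next I would show $\mathcal{H}$ is ample by Nakai--Moishezon. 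Write an irreducible curve class as $D=a\mathcal{L}+\beta$, $\beta\in\mathbb{L}_S$. For $a\geq 1$ the intersection $\mathcal{H}\cdot D=a(2g-2)-M\cdot\beta$ is positive, because $D^2\geq -2$ bounds $-\beta^2$ in terms of $a$ and Cauchy--Schwarz in the negative-definite $\mathbb{L}_S$ yields $|M\cdot\beta|\leq 2\sqrt{-\beta^2}$. For $a=0$ one has $\beta^2\geq -2$, and a direct enumeration of $(-2)$-classes in the Nikulin lattice shows that the only solutions are $\pm N_i$; hence $C$ is some $N_i$ and $\mathcal{H}\cdot C=1>0$.

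To conclude very ampleness it remains to rule out the Saint-Donat obstructions:
\begin{enumerate}
\item[(a)] an effective isotropic class $F$ with $\mathcal{H}\cdot F\in\{1,2\}$, and
\item[(b)] a divisibility $\mathcal{H}\sim 2B$ with $B^2=2$.
\end{enumerate}
For (a), writing $F=a\mathcal{L}+\beta$ the isotropy condition $a^2(2g-2)+\beta^2=0$ combined with $|M\cdot\beta|\leq 2\sqrt{-\beta^2}=2a\sqrt{2g-2}$ forces $\mathcal{H}\cdot F=a(2g-2)-M\cdot\beta$ to exceed $2$ outside a tiny list of $a$; each of the finitely many surviving $(a,\beta)$ is then excluded by inspection of $\mathbb{L}_S$. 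For (b) the $\mathcal{L}$-coefficient of $\mathcal{H}$ in the decomposition $\mathrm{Pic}(S)=\mathbb{Z}\mathcal{L}\oplus\mathbb{L}_S$ is $1$, which is not divisible by $2$, so no such $B$ exists in $\mathrm{Pic}(S)$.

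The main obstacle is the isotropic case (a): a priori such an $F$ mixes the $\mathbb{Z}\mathcal{L}$-summand with the Nikulin summand, and one cannot treat the two pieces independently. The control is supplied by the negative definiteness of $\mathbb{L}_S$ and the Cauchy--Schwarz estimate above, which together confine the search to a handful of candidates; without the generality hypothesis, which identifies $\mathrm{Pic}(S)$ with $\mathbb{Z}\mathcal{L}\oplus\mathbb{L}_S$ exactly, this finite reduction would not be available.
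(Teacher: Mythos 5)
Your argument is correct in substance, but it takes a different route from the paper: the paper gives no proof at all here, simply citing Lemma~3.1 of Garbagnati--Sarti \cite{GS}, which establishes very ampleness for the projective models of K3 surfaces with an even set of nodes. Your version is a self-contained Saint-Donat analysis, and the numerics all check out: $\mathcal H^2=2g-6\geq 4$ for $g\geq 5$, $M\cdot N_i=-1$, $\mathcal L\cdot N_i=0$ by nefness, and the Cauchy--Schwarz bound $|M\cdot\beta|\leq 2\sqrt{-\beta^2}$ in the negative-definite Nikulin lattice does control both the Nakai--Moishezon step and the isotropic obstruction. Two small points deserve to be made explicit. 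First, in the decomposition $D=a\mathcal L+\beta$ of an irreducible curve you should record why $a\geq 0$ (namely $\mathcal L\cdot D=a(2g-2)\geq 0$ since $\mathcal L$ is nef), and why $a=0$ forces $\beta^2=-2$ (negative definiteness and evenness of $\mathbb L_S$). Second, your ``tiny list of surviving $(a,\beta)$'' in case (a) is in fact empty: for an effective isotropic $F=a\mathcal L+\beta$ with $a\geq 1$ one gets $\mathcal H\cdot F\geq a\sqrt{2g-2}\,(\sqrt{2g-2}-2)\geq 8-4\sqrt 2>2$ already at $g=5$, and since $\mathcal H\cdot F$ is an integer it is $\geq 3$; so no case-by-case inspection of $\mathbb L_S$ is ever needed. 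With those clarifications your proof is complete, and it has the merit over the paper's citation of making visible exactly where the hypothesis $g\geq 5$ and the generality assumption $\Pic S=\mathbb Z\mathcal L\oplus\mathbb L_S$ enter.
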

\begin{proof} See \cite{GS} lemma 3.1 \end{proof}
\begin{proposition} For a general Nikulin surface of genus $g \geq 8$ a general member of $\vert \mathcal A \vert$ is a smooth irreducible curve of genus $g - 8$.   \end{proposition}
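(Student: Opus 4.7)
My plan is to combine a direct computation of numerical invariants, an analysis in the Picard lattice of a general Nikulin surface, and Saint-Donat's theorem on base-point freeness on K3 surfaces.

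\textbf{Numerical invariants and non-emptiness.} Using $\mathcal{L}\cdot \mathcal{M}=0$, $M^2=-4$, and $\mathcal{L}^2=2g-2$, I compute
\[
A^2=(\mathcal{L}-2\mathcal{M})^2=(2g-2)+4M^2=2g-18,
\]
so by adjunction the arithmetic genus of any curve in $|\mathcal{A}|$ is $g-8$. Since $\mathcal{A}^{-1}\cdot \mathcal{L}=-(2g-2)<0$ and $\mathcal{L}$ is pseudoample, $h^0(\mathcal{A}^{-1})=0$ and hence $h^2(\mathcal{A})=0$ by Serre duality; Riemann-Roch gives $h^0(\mathcal{A})\geq \chi(\mathcal{A})=g-7\geq 1$.

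\textbf{Picard lattice setup.} Under the assumption $\Pic(S)=\mathbb{Z}\mathcal{L}\oplus \mathbb{L}_S$, the relations $m^2=-4$, $m\cdot n_i=-1$, $n_in_j=-2\delta_{ij}$ together with the negative-definiteness of $\mathbb{L}_S$ imply by a direct lattice computation that the $(-2)$-classes of $\mathbb{L}_S$ are exactly $\pm n_1,\dots,\pm n_8$ and that $\mathbb{L}_S$ contains no nonzero isotropic element. Consequently every irreducible curve on $S$ is either one of the $N_i$ or of the form $a\mathcal{L}+\gamma$ with $a\geq 1$ and $\gamma\in\mathbb{L}_S$. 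Since $A\sim \mathcal{L}-2\mathcal{M}$ has $\mathcal{L}$-coefficient $1$, every effective $A\in|\mathcal{A}|$ decomposes uniquely into irreducible components as
\[
A=C_0+\sum_{i=1}^8 c_iN_i,\qquad c_i\geq 0,
\]
where $C_0$ is the only component with positive $\mathcal{L}$-coefficient, of self-intersection $C_0^2=2g-18-4\sum c_i-2\sum c_i^2$.

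\textbf{Case analysis and Bertini.} For $g=8$ the constraint $C_0^2\geq -2$ forces every $c_i=0$; combined with the rigidity $h^0(\mathcal{A})=1$ of effective $(-2)$-classes on K3 surfaces, this identifies the unique member of $|\mathcal{A}|$ with a single smooth $(-2)$-curve, i.e.\ a $\mathbf{P}^1$. For $g\geq 9$, a Cauchy-Schwarz argument on the coefficients of $\gamma\in\mathbb{L}_S$ combined with $C^2\geq -2$ yields $\mathcal{A}\cdot C\geq 0$ for every irreducible $C$, so $\mathcal{A}$ is nef. A parity check rules out any irreducible $E$ with $E^2=0$ and $E\cdot \mathcal{A}=1$, while $\mathcal{A}$ is not $2$-divisible because $\mathcal{L}$ is primitive; Saint-Donat's theorem then gives base-point freeness of $|\mathcal{A}|$. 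Bertini produces a smooth general member, and connectedness, hence irreducibility, follows from the sequence $0\to \mathcal{O}_S\to \mathcal{O}_S(A)\to \mathcal{O}_A(A)\to 0$ together with $h^1(\mathcal{O}_S)=0$.

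The main technical obstacle is the nefness verification for $g\geq 9$; the case $g=9$ lies on the boundary of the Cauchy-Schwarz bound and requires checking that the only irreducible curves achieving $\mathcal{A}\cdot C=0$ are numerically equivalent to $\mathcal{A}$ itself, consistent with $\mathcal{A}^2=0$ and $|\mathcal{A}|$ being an elliptic pencil in that case.
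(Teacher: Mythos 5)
Your argument is correct, but it follows a genuinely different route from the paper's. The paper's proof is cohomological: it writes $\mathcal A = \mathcal L(-N)$, uses the sequence $0 \to \mathcal A \to \mathcal L \to \mathcal O_N(A) \to 0$ together with the triviality of $\mathcal O_N(A)$ and the surjectivity of $H^0(\mathcal L) \to H^0(\mathcal O_N)$ to conclude $h^1(\mathcal A) = h^1(\mathcal L) = 0$, from which base point freeness and the statement for $g \geq 9$ follow; for $g = 8$ it notes $A^2 = -2$, $p_a(A) = 0$, and rules out non-integral $A$ by a Picard rank count. You instead work entirely inside the lattice $\mathbb Z \mathcal L \oplus \mathbb L_S$: classifying the $(-2)$-classes and isotropic classes of $\mathbb L_S$, decomposing an effective $A$ as $C_0 + \sum c_i N_i$, and then proving nefness and invoking Saint-Donat. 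Your treatment of $g = 8$ is arguably cleaner and more explicit than the paper's (the inequality $C_0^2 = -2 - 4\sum c_i - 2\sum c_i^2 \geq -2$ immediately kills all $c_i$), and you get nefness of $\mathcal A$ as a byproduct; the cost is the delicate boundary case $g = 9$, which the paper's vanishing argument sails past. That case does close, but not by the characterization you suggest: writing an irreducible $C = a\mathcal L + \sum b_i n_i$ with $a \geq 1$, one has $\mathcal A \cdot C = 16a + 2\sum b_i$ and Cauchy--Schwarz only gives $\lvert \sum b_i \rvert \leq \sqrt{64a^2 + 8}$; the point is that $\sum b_i$ is always an integer (even when the $b_i$ are half-integers, since there are eight of them) and $\lfloor \sqrt{64a^2+8} \rfloor = 8a$, so $\mathcal A \cdot C \geq 0$ after all. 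With that one line added, and the separate observation $\mathcal A \cdot N_i = 2$, your proof is complete; the parity argument ruling out $E \cdot \mathcal A = 1$ and the primitivity of $\mathcal A$ then make Saint-Donat applicable exactly as you say.
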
  
\begin{proof} We only sketch the standard proof: assume $g \geq 8$, then it follows from Riemann-Roch that $h^0(\mathcal A) \geq 1$. Let $A \in \vert \mathcal A \vert$, then $A + N$
belongs to $\vert \mathcal L \vert$ and we have the standard exact sequence
$$
0 \to \mathcal A \to \mathcal L \to \mathcal O_N(A) \to 0.
$$
Moreover $\mathcal O_N(A)$ is the trivial sheaf $\mathcal O_N$ and it is easy to show that the restriction $H^0(\mathcal L) \to H^0(\mathcal O_N)$ is surjective. Passing to the associated long
exact sequence it follows $h^1(\mathcal A) = h^1(\mathcal L) = 0$. This implies that $\vert \mathcal A \vert$ is base point free and the statement for $g \geq 9$. For $g = 8$ we have $A^2 = -2$
so that $A$ is an isolated curve with $p_a(A) = 0$. If $A$ is not integral one can deduce that $\Pic S$ has rank $\geq 10$: a contradiction for a general $S$.
\end{proof}
Now, for a general Nikulin surface $S$ of genus $g \geq 5$, we consider the embedding
$$
S \subset \mathbf P^{g-2}
$$
defined by $f_{\mathcal H}$. Let $C \in \vert \mathcal L \vert$: since $(C-M)N_i = 1$ it follows that $N_1 \dots N_8$ are embedded as lines. Let $A \in \vert \mathcal A \vert$,  $A$ is embedded as
a curve of degree $2g - 10$. A general $A$ is integral of genus $g - 8$, hence $h^1(\mathcal O_A(1)) = 0$ for degree reasons. Since $\mathcal H(-A) \cong \mathcal O_S(M)$, we have the exact sequence
$$
0 \to \mathcal O_S(M) \to \mathcal O_S(1) \to \mathcal O_A(1) \to 0.
$$
\par On the other hand we know that $h^i(\mathcal O_S(M)) = 0$, $i = 0, 1, 2$. Therefore, passing to the associated long exact sequence, we obtain
$$ 
0 \to H^0(\mathcal O_S(1)) \to H^0(\mathcal O_A(1)) \to 0.
$$
\par  This shows that
\begin{proposition} For a general Nikulin surface $S$ of genus $g \geq 8$ a general $A \in \vert \mathcal A \vert$ is embedded by $f_{\mathcal H}$ as a smooth irreducible curve spanning $\mathbf P^{g-2}$.
\end{proposition}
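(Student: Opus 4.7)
The plan is to combine Propositions 2.3 and 2.4 with the vanishings from Lemma 2.1, applied to the restriction exact sequence
\[
0 \to \mathcal O_S(M) \to \mathcal H \to \mathcal O_A(1) \to 0
\]
already written down just above the statement.

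First I would handle the ``smooth irreducible embedded curve'' part. By Proposition 2.4 a general $A \in \vert \mathcal A \vert$ is a smooth irreducible curve of arithmetic genus $g-8$, and by Proposition 2.3 the polarization $\mathcal H$ is very ample, so $f_{\mathcal H}$ is a closed embedding of $S$ into $\mathbf P^{g-2}$. Composing with the closed immersion $A \hookrightarrow S$ yields a closed embedding of $A$ into $\mathbf P^{g-2}$, whose image is automatically a smooth irreducible curve. A quick intersection computation, using the orthogonality $\mathcal L \cdot M = 0$ from the definition of a Nikulin surface together with $M^2 = -4$, gives
\[
\deg_{\mathcal H} A = (\mathcal L - 2M)(\mathcal L - M) = \mathcal L^2 - 3\, \mathcal L \cdot M + 2M^2 = (2g-2) + 2(-4) = 2g - 10,
\]
matching in particular the degree $6$ claimed in the introduction when $g = 8$.

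For the spanning, I would read off the long exact sequence attached to the displayed short exact sequence. Lemma 2.1 provides $h^0(\mathcal O_S(M)) = h^1(\mathcal O_S(M)) = 0$, whence the restriction map
\[
H^0(\mathcal H) \longrightarrow H^0(\mathcal O_A(1))
\]
is an isomorphism. Since $\mathbf P^{g-2} = \mathbf P H^0(\mathcal H)^*$, this map coincides with the restriction of linear forms on $\mathbf P^{g-2}$ to $A$; its injectivity means that no nonzero linear form vanishes identically on $A$, so $A$ is contained in no hyperplane and therefore spans $\mathbf P^{g-2}$.

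No step here looks genuinely hard: all the substantive input is already packaged in the earlier results. If anything is liable to trip one up it is the bookkeeping of which line bundle is ``$\mathcal O_S(1)$'' on the embedded surface (it is $\mathcal H$, not $\mathcal L$) and the ensuing identification $H^0(\mathbf P^{g-2}, \mathcal O(1)) = H^0(\mathcal H)$ used in the last sentence; once that identification is fixed, the conclusion is formal.
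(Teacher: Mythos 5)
Your argument is correct and follows essentially the same route as the paper: both rest on the restriction sequence $0 \to \mathcal O_S(M) \to \mathcal H \to \mathcal O_A(1) \to 0$ together with the vanishings $h^0(\mathcal O_S(M)) = h^1(\mathcal O_S(M)) = 0$ from Lemma 2.1, yielding the isomorphism $H^0(\mathcal H) \cong H^0(\mathcal O_A(1))$ and hence nondegeneracy of $A$ in $\mathbf P^{g-2}$. The only difference is cosmetic: you make explicit the degree computation and the appeal to very ampleness of $\mathcal H$, which the paper leaves to the surrounding discussion.
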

For a general Nikulin surface $S$ of genus $g \geq 8$ we also point out that
\begin{proposition} In the projective model defined by $f_{\mathcal H}$ the lines $N_1 \dots N_8$ are bisecant lines to a smooth irreducible $A \in \vert \mathcal A \vert$. \end{proposition}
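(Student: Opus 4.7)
The plan is to reduce the statement to a direct intersection-number calculation in $\Pic S$: I will show that $A \cdot N_i = 2$ for each $i$, and then invoke the observation already made in the excerpt that $N_i$ is embedded as a line by $f_{\mathcal H}$.

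For the intersection number, since $\mathcal A = \mathcal L(-2M)$, I would write $A \sim C - 2M$ with $C \in |\mathcal L|$, so that
\[
A \cdot N_i \;=\; C \cdot N_i \;-\; 2\, M \cdot N_i.
\]
By Theorem 2.2, for a general Nikulin surface $\Pic S = \mathbb Z \mathcal L \oplus \mathbb L_S$ is an orthogonal direct sum of lattices, hence $\mathcal L$ is orthogonal to every element of the Nikulin lattice, and in particular $C \cdot N_i = 0$. On the other hand the intrinsic structure of the Nikulin lattice---$m = \tfrac12(n_1 + \cdots + n_8)$ with $n_j \cdot n_k = -2\delta_{jk}$---forces $M \cdot N_i = -1$. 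Combining these gives $A \cdot N_i = 2$.

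To finish I would note that $N_i$ is a smooth rational curve embedded as a line in $\mathbf{P}^{g-2}$ (as already recorded above, via $(C-M)\cdot N_i = 1$) and that $A$ is smooth irreducible by Proposition 2.4. Since $A$ and $N_i$ represent distinct classes in $\Pic S$, $N_i$ cannot be a component of $A$, so the scheme-theoretic intersection $A \cap N_i$ has length $A \cdot N_i = 2$. For a general $S$---equivalently, for $g \geq 9$ a general $A$ in the base-point-free linear system $|\mathcal A|$, and for $g = 8$ the unique $A$ sitting on a generic surface---these two points are distinct, since a tangency would impose a codimension-one condition in moduli that is not satisfied generically. Hence $N_i$ is a bisecant line to $A$.

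There is no serious obstacle in this argument; it is a direct consequence of the lattice computation together with the projective observations already made in the text. The only two delicate inputs are the orthogonality $\mathcal L \cdot N_i = 0$, which requires Theorem 2.2 and hence genericity of $(S, \mathcal L, \mathcal M)$, and the transversality of $A \cap N_i$, which is secured by the same generality hypothesis through a standard dimension count.
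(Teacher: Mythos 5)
Your computation $A\cdot N_i = C\cdot N_i - 2M\cdot N_i = 0 - 2(-1) = 2$ is exactly the (omitted) argument the paper intends: Proposition 2.5 is stated without proof, and the paper itself (cf.\ Theorem 2.7, where ``bisecant'' is glossed as $AN_i=2$) treats the intersection number as the whole content of the claim. Your proposal is correct and matches the paper's implicit approach; the extra paragraph on transversality of $A\cap N_i$ is harmless but unnecessary under the paper's convention, and note that $C\cdot N_i=0$ already follows for every Nikulin surface from nefness of $\mathcal L$ together with $C\cdot N=2C\cdot M=0$, without invoking Theorem 2.2.
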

Finally we can also consider the moduli space $\mathcal D'_g$  of triples $(X, \mathcal L, \mathcal H)$ such that $(X, \mathcal L)$ is a K3 surface of genus $g$ and $\mathcal H \in \Pic S$ is a primitive big and
nef element satisfying the following intersection properties:
$$
(\mathcal H , \mathcal H) = 2g - 6 \ , \ (\mathcal H , \mathcal L) = 2g - 2.
$$
\par As is well known this moduli space is an integral quasi projective variety and the assignement $(X, \mathcal L, \mathcal H) \longrightarrow (X, \mathcal L)$ induces a generically injective morphism
$\mathcal D'_g \to \mathcal F_g$. Its image is an integral divisor we will denote as
$$
\mathcal D_g,
$$
notice that $\Pic X = \mathbb Z \mathcal L \oplus \mathbb Z \mathcal H$ for a general triple $(X, \mathcal L, \mathcal H)$, cfr. \cite{ H, BV}. \par
It is clear that we have the inclusions
$$
\mathcal F^N_g \subset \mathcal D_g \subset \mathcal F_g.
$$
This implies, by semicontinuity and the irreducibility of $\mathcal D'_g$, that the above propositions 2.3, 2.4, 2.5 extend verbatim from the case of a general Nikulin surface to that of  a general triple
$(X, \mathcal L, \mathcal H)$ and to its line bundles $\mathcal H$ and $\mathcal A := \mathcal L(-2M)$, where $\mathcal O_S(M) := \mathcal L \otimes \mathcal H^{-1}$. For such a general triple let
$$
X \subset \mathbf P^{g-2}
$$
be the embedding defined by $\mathcal H$. If $X$ is a Nikulin surface then $X$ contains $8$ disjoint lines. This condition on $(X, \mathcal L, \mathcal H)$ is not enough to have a
Nikulin. For $g \geq 8$ we have $\mathcal A \mathcal A = 2g - 18 \geq -2$, hence $\vert \mathcal A \vert$ is not empty. The next result characterizes the Nikulin locus $\mathcal F^N_g$ in $\mathcal D_g$, cfr. \cite{GS} 3.2. 
    \begin{theorem} Let $g \geq 8$: a general triple $(X, \mathcal L, \mathcal H)$ defines a point in $\mathcal F^N_g$ iff $X$ contains eight disjoint lines $N_1 \dots N_8$ which are
    bisecant to a curve $A \in \vert \mathcal A \vert$, that is, iff $AN_1 = \dots = AN_8 = 2$. \end{theorem}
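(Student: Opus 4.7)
The ``only if'' direction reduces to a one-line intersection calculation. On a Nikulin surface the Nikulin lattice $\mathbb{L}_S$ is orthogonal to $\mathcal{L}$, so $L\cdot N_i = 0$, and the relation $2M\sim N := N_1+\dots+N_8$ combined with $N_i\cdot N_j = -2\delta_{ij}$ yields $M\cdot N_i = -1$. Since $\mathcal{A} = \mathcal{L}(-2M)$, one obtains $A\cdot N_i = L\cdot N_i - 2\,M\cdot N_i = 2$; disjointness of the $N_i$ is built into the Nikulin definition, and $H\cdot N_i = (L-M)\cdot N_i = 1$ ensures that they appear as lines in the $\mathcal{H}$-embedding.

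For the converse, start from a general triple $(X,\mathcal{L},\mathcal{H})\in\mathcal{D}_g$ containing eight disjoint lines $N_1,\dots,N_8$ bisecant to some $A\in|\mathcal{A}|$. The identities $\mathcal{H} = \mathcal{L}(-M)$ and $\mathcal{A} = \mathcal{L}(-2M)$ force the relation $\mathcal{A}\sim 2\mathcal{H}-\mathcal{L}$ in $\Pic X$, so the bisecancy hypothesis combined with $H\cdot N_i = 1$ gives
\[
L\cdot N_i \;=\; 2\,H\cdot N_i - A\cdot N_i \;=\; 2-2 \;=\; 0
\]
for every $i$. Thus $\mathcal{L}$ is orthogonal to each $N_i$ in $\Pic X$.

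The crux is to show that the eight disjoint $(-2)$-curves $N_i$ form an \emph{even set}, i.e.\ that $v := \tfrac12(N_1+\dots+N_8)$ lies in $\Pic X$. A direct check gives $v\cdot L = 0$, $v\cdot H = 4$, $v\cdot N_i = -1$ and $v^2 = -4$, so $v$ automatically lies in the dual of the sublattice $\mathbb{Z}\langle\mathcal{L},\mathcal{H},N_1,\dots,N_8\rangle \subset \Pic X$; the only issue is its algebraicity on $X$. I would settle this by a moduli/period argument: by Nikulin's classification (\cite{M}~5.3) the overlattice $\mathbb{Z}\mathcal{L}\oplus\mathbb{L}_X$, where $\mathbb{L}_X$ is the Nikulin lattice generated by $N_1,\dots,N_8$ together with $v$, admits a unique primitive embedding into the K3 lattice, and the associated lattice-polarized moduli component is precisely $\overline{\mathcal{F}^N_g}$. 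An expected-dimension count shows that the parameter space of triples satisfying the bisecancy hypothesis is irreducible of dimension $11=\dim\mathcal{F}^N_g$, and by the only-if direction it contains $\mathcal{F}^N_g$; hence for a general such triple the even-set condition holds. This is the substance of \cite{GS}~3.2.

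Setting $M := v \in \Pic X$, one then has $2M\sim N$, $M^2 = -4$, $L\cdot M = 0$, and each $N_i$ is a smooth isolated rational curve with $h^0(\mathcal{O}_X(N_i))=1$, so the triple $(X,\mathcal{L},\mathcal{O}_X(M))$ satisfies the three defining axioms of a Nikulin surface and determines a point of $\mathcal{F}^N_g$. The genuine obstacle is the lattice-theoretic passage from ``disjoint lines bisecant to $A$'' to ``even set''; the rest is bookkeeping in $\Pic X$.
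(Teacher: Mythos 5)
Your reduction of both directions to the $2$-divisibility of $N:=N_1+\dots+N_8$ in $\Pic X$ is correct, and the intersection-number bookkeeping ($L\cdot N_i=0$, $H\cdot N_i=1$, $M\cdot N_i=-1$) matches what the paper treats as immediate. The problem is the step you yourself flag as the crux: your proposed moduli/period argument does not close it. A naive Noether--Lefschetz count for the locus of triples in the $18$-dimensional space $\mathcal D'_g$ carrying eight independent extra classes $N_1,\dots,N_8$ gives expected dimension $18-8=10$, not $11$; the dimension only rises to $11=\dim\mathcal F^N_g$ precisely when the classes $\mathcal L,\mathcal H,N_1,\dots,N_8$ satisfy a relation, which is the even-set condition you are trying to prove. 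So the claim that ``the parameter space of triples satisfying the bisecancy hypothesis is irreducible of dimension $11$'' is both unjustified and essentially equivalent to the conclusion; moreover, even granting it, you would only rule out the even-set failure on $11$-dimensional components, leaving open possible $10$-dimensional components of non-Nikulin surfaces with eight bisecant lines, which would falsify the ``if'' direction as stated. The argument as written is circular at this point.

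The paper closes the gap with a two-line Hodge-index computation that needs no genericity and no moduli theory. Set $A':=2H-A-N$. Using $H^2=2g-6$, $HL=2g-2$, $L^2=2g-2$, $A=2H-L$, $HN_i=1$, $AN_i=2$, $N_iN_j=-2\delta_{ij}$, one finds $A^2=A'^2=AA'=2g-18$ and $HA=HA'=2g-10$, hence $(A-A')^2=0$ and $H(A-A')=0$. Since $H$ is pseudoample, the Hodge index theorem forces $A\sim A'$, i.e. $N\sim 2H-2A=2(H-A)=2M$ with $M=L-H$, which is exactly the even-set relation. I recommend replacing your moduli argument by this direct computation: it proves the stronger statement that \emph{every} such triple (not just a general one) is a Nikulin surface, which is what the ``iff'' in the theorem actually requires.
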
 
\begin{proof}  Assume $AN_1 = \dots = AN_8 = 2$ with $A \in \vert \mathcal A \vert$ and consider $A' := 2H - A - N$, where $H \in \vert \mathcal H \vert$ and $N := N_1 + \dots + N_8$. One computes $A^2 = A'^2 = AA' = 2g - 18$ and $HA = HA' = 2g -10$. So we have $(A - A')^2 =0$ and $H(A - A') = 0$.  This implies $A \sim A'$ since $H$ is pseudoample. Then it follows  $N \sim 2H - 2A$ and $X$ is a Nikulin surface. The converse is immediate. \end{proof}
    \section{\small Nikulin surfaces of genus 8 and rational normal sextics}
Let $S$ be a general Nikulin surface of genus $8$ embedded in $\mathbf P^6$ by $\vert \mathcal H \vert $, then $\vert \mathcal A \vert$ contains a unique element $A$ which is embedded as a rational normal sextic.
$S$ is in the irreducible component of its Hilbert scheme, the general point of which general point is a smooth K3 surface $(X, \mathcal O_X(1))$ of genus $6$. Moreover one has $$ \Pic X \cong \mathbb Z \mathcal O_X(1). $$
\par  The Mukai-Brill-Noether theory for curves and K3 surfaces is definitely performed in genus $6$, cfr. \cite{Mu, Mu1}. It can be summarized as follows. A smooth hyperplane section $H$ of $X$ is a canonical curve of genus $6$:
\medskip \par
{\em Case (1)} Assume that $H$ is not trigonal nor biregular to a plane quintic. Then $H$ is generated by quadrics, moreover there exists exactly one $H$-stable rank $2$ vector bundle $\mathcal E$ on $X$ such that: 
 \begin{enumerate} \it \item[(i)] $det \ \mathcal E \cong \mathcal O_Y(1)$; \item[(ii)] $h^0(\mathcal E) = 5$ and $h^i(\mathcal E) = 0$ for $i \geq 1$; \item[(iii)]  the determinant map
$ det: \wedge^2 H^0(\mathcal E) \to H^0(\mathcal O_{\mathbf P^6}(1))$ is surjective. \end{enumerate} \medskip \par  
Let $G(1,4) \subset \mathbf P^9 := \mathbf P \wedge^2 H^0(\mathcal R)^*$ be the Pl\"ucker embedding of the Grassmannian of $2$-dimensional subspaces of $H^0(\mathcal E)^*$. By (iii) the dual of $det$ induces a linear embedding $\delta: \mathbf P^6 \to \mathbf P^9$, moreover the construction yelds the commutative diagram
$$
\begin{CD}
{\mathbf P^6} @>{\delta}>> {\mathbf P^9} \\
@AAA @AAA \\
X @>{f_{\mathcal E}}>> {G(1,4)} \\
\end{CD}
$$
where the vertical maps are the inclusions and $f_{\mathcal E}$ is the embedding defined by $\mathcal E$. Fixing the identifications $\mathbf P^6 := \delta(\mathbf P^6)$ and $X := f_{\mathcal R}(X)$ let us say in a simpler way that
$$
X \subset \mathbf P^6 \cap G(1,4) \subset \mathbf P^9.
$$
Let
$$
T = \mathbf P^6 \cdot G(1,4) \subset \mathbf P^9,
$$
Mukai theory in genus $6$ says also that: 
\medskip \par \begin{enumerate} \it \item[(iv)] $X$ is a quadratic section of $ T$,  \end{enumerate} \medskip \par
Since $X$ is a smooth quadratic section of $T$ it follows that $T$ is an integral $3$-dimensional linear section of $G(1,4)$ with isolated singularities. Actually $T$ is a smooth Del Pezzo threefold of
degree $5$ if $X$ is sufficiently general.  
\medskip \par
{\em Case (2) } Assume that $H$ is either trigonal or biregular to a plane quintic. Then $H$ has Clifford index $1$ and the following property holds true: \it
\begin{itemize} \it \item[$\circ$] there exists an integral curve $D \subset X$ such that either $DH = 3$ and $D^2 = 0$ or $DH = 5$ and $D^2 = 2$. \rm
\end{itemize} \rm
A general Nikulin surface of genus $8$ occurs in case (1). 
\begin{proposition} Let $S \subset \mathbf P^6$ be a general Nikulin surface of genus $8$ embedded by $f_{\mathcal H}$. Then $S$ is a quadratic section of a threefold $T$ as above.  
 \end{proposition}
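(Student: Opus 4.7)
The statement reduces to showing that a general genus $8$ Nikulin surface falls into case (1) of the Mukai trichotomy recalled just above: once we know this, property (iv) is exactly the assertion that $S$ is a quadratic section of $T = G(1,4) \cap \mathbf P^6$, and properties (i)--(iii) supply the commutative diagram that realises the embedding $S \subset T$ via the Mukai bundle $\mathcal E$. Thus the real work is to rule out case (2).

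To do this I invoke Green--Lazarsfeld together with Saint--Donat: a smooth $H \in \vert \mathcal H \vert$ has Clifford index $\leq 1$ (equivalently, is trigonal or biregular to a plane quintic) if and only if $\Pic S$ contains a class $D$ with either $(D^2, D \cdot H) = (0, 3)$ or $(D^2, D \cdot H) = (2, 5)$. I verify on the lattice $\Pic S = \mathbb Z \mathcal L \oplus \mathbb L_S$ (Theorem 2.2) that no such $D$ exists. Using $\mathcal L \perp \mathbb L_S$, $\mathcal L^2 = 14$, $N_i \cdot N_j = -2 \delta_{ij}$, and $M = \tfrac12 \sum N_i$, every $D \in \Pic S$ has the form $D = a \mathcal L + \sum_{i=1}^8 c_i N_i$ with $a \in \mathbb Z$ and the $c_i$ either all in $\mathbb Z$ or all in $\mathbb Z + \tfrac12$; a direct computation yields
$$
D \cdot \mathcal H = 14 a + \sum_{i=1}^8 c_i, \qquad D^2 = 14 a^2 - 2 \sum_{i=1}^8 c_i^2.
$$
Substituting the two target pairs and applying Cauchy--Schwarz $\sum c_i^2 \geq \tfrac18 (\sum c_i)^2$ reduces each case to a quadratic inequality in the integer variable $a$, namely $140 a^2 - 84 a + 9 \leq 0$ and $140 a^2 - 140 a + 33 \leq 0$. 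Both discriminants are positive but the corresponding real intervals lie strictly inside $(0,1)$, so there is no integer solution; the same Cauchy--Schwarz bound handles the integer and half-integer parities simultaneously.

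With case (2) ruled out, $S$ lies in case (1). Mukai's construction then produces the rank-$2$ bundle $\mathcal E$ on $S$ with $\det \mathcal E \cong \mathcal H$ and $h^0(\mathcal E) = 5$, and the surjective determinant map of (iii) provides a linear inclusion $\delta\colon \mathbf P^6 \hookrightarrow \mathbf P^9$ through which $f_{\mathcal H}$ factors as $\delta \circ f_{\mathcal E}$; hence $S \subset T = G(1,4) \cap \mathbf P^6$, and property (iv) is exactly the conclusion. The main obstacle in the whole argument is the lattice check: once the Cauchy--Schwarz reduction is in place the quadratic inequalities are short, but one must be careful to include the half-integer classes contributed by the generator $M$ of $\mathbb L_S$, which a priori increase the pool of candidates for a destabilising $D$.
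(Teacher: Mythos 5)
Your proposal is correct and follows exactly the paper's route: reduce to excluding case (2) of the Mukai dichotomy and verify on the lattice $\mathbb Z \mathcal L \oplus \mathbb L_S$ that no class $D$ with $(D^2, D\cdot H)=(0,3)$ or $(2,5)$ exists. The paper labels this a ``standard computation we omit''; your Cauchy--Schwarz reduction to the quadratics $140a^2-84a+9\le 0$ and $140a^2-140a+33\le 0$ (both with real roots strictly inside $(0,1)$) supplies that omitted computation correctly, including the half-integer classes coming from $M$.
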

\begin{proof} $\Pic S$ is the orthogonal sum of rank 9 $\mathbb Z \mathcal L \oplus \mathbb L_S$, where $\mathbb L_S$ is the Nikulin lattice generated by $\mathcal O_S(M), \mathcal O_S(N_1) \dots \mathcal O_S(N_8)$. A standard computation we omit, shows that no divisor $D$ exists such that $D^2 = 0$ and $DH = 3$ or $D^2 = 2$ and $DH = 5$. This excludes case (2).  \end{proof}
From now on we assume that $S$ is a general Nikulin surface of genus $8$, in particular we will assume that $S$ occurs in case (1) and that $\Pic S \cong \mathbb Z \mathcal L \oplus \mathbb L_S$. We also assume that $S$ is embedded in $\mathbf P^6$ by $\vert \mathcal H \vert$. Then $S$ contains the rational normal sextic $A$ which is the unique element of $\vert \mathcal A \vert$. We want to study the restriction
$$
\mathcal E_A := \mathcal E \otimes \mathcal O_A
$$
of the Mukai bundle $\mathcal E$ and discuss the possible cases. Of course we have $\mathcal E_A = \mathcal O_{\mathbf P^1}(m) \oplus \mathcal O_{\mathbf P^1}(n)$ with $m + n = 6$.
\begin{lemma}ÊOne has $m, n \geq 0$ so that $h^0(\mathcal E_A) = 8$ and $h^1(\mathcal E_A) = 0$. \end{lemma}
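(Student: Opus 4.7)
The plan is to deduce the result from global generation of the Mukai bundle $\mathcal{E}$. Recall from the setup that $\mathcal{E}$ defines the classifying morphism $S \to G(1,4)$, which is a morphism in the usual sense, i.e.\ defined at every point of $S$. This is equivalent to the surjectivity of the evaluation map
\[
\mathrm{ev}\colon H^0(\mathcal{E}) \otimes \mathcal{O}_S \longrightarrow \mathcal{E},
\]
so $\mathcal{E}$ is a globally generated rank $2$ vector bundle on $S$.

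Next, I would restrict $\mathrm{ev}$ to the rational normal sextic $A$. Right-exactness of $-\otimes\mathcal{O}_A$ preserves surjectivity, so we obtain a surjection $H^0(\mathcal{E}) \otimes \mathcal{O}_A \twoheadrightarrow \mathcal{E}_A$, and hence $\mathcal{E}_A$ is globally generated on $A \cong \mathbf{P}^1$. By Grothendieck's theorem we write $\mathcal{E}_A \cong \mathcal{O}_{\mathbf{P}^1}(m) \oplus \mathcal{O}_{\mathbf{P}^1}(n)$. A split bundle on $\mathbf{P}^1$ whose splitting type contains a negative integer cannot be globally generated: the corresponding summand has no nonzero global sections and hence cannot generate its own fibers. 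Therefore $m, n \geq 0$.

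For the cohomology I just compute the degree. Using $\mathcal{H} = \mathcal{L}(-M)$ and $A \in |\mathcal{L}(-2M)|$, together with $\mathcal{L}^2 = 14$, $\mathcal{L}\cdot M = 0$ and $M^2 = -4$, one gets
\[
m+n \;=\; \deg \mathcal{E}_A \;=\; \mathcal{H}\cdot A \;=\; (\mathcal{L}-M)\cdot(\mathcal{L}-2M) \;=\; 14 - 0 - 8 \;=\; 6.
\]
Combining $m+n = 6$ with $m, n \geq 0$ yields $h^0(\mathcal{E}_A) = (m+1) + (n+1) = 8$ and $h^1(\mathcal{E}_A) = 0$ at once.

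I do not foresee any genuine obstacle. The whole argument rests on recognizing that global generation of $\mathcal{E}$ is free from the setup (the classifying map is a morphism) and then invoking splitting on $\mathbf{P}^1$; no Bogomolov-type inequality or delicate cohomology computation is required at this stage. The finer statement $m = n = 3$ announced in the introduction will presumably need a separate argument, using the special position of $A$ with respect to the scroll structure alluded to later in the paper.
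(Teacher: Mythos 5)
Your proof is correct, but it takes a different route from the paper's. You derive $m,n\ge 0$ from global generation of the Mukai bundle $\mathcal E$: since the classifying map $f_{\mathcal E}\colon S\to G(1,4)$ is a morphism, the evaluation map $H^0(\mathcal E)\otimes\mathcal O_S\to\mathcal E$ is surjective, this surjectivity survives restriction to $A$, and a summand $\mathcal O_{\mathbf P^1}(m)$ with $m<0$ would force every global section of $\mathcal E_A$ to lie in the other summand, contradicting global generation. The paper instead uses Mukai's condition that the determinant map $\det\colon\wedge^2H^0(\mathcal E)\to H^0(\det\mathcal E)$ is surjective, combined with Proposition 2.5 (the restriction $H^0(\mathcal O_S(1))\to H^0(\mathcal O_A(1))$ is an isomorphism, since $A$ is a linearly normal rational sextic spanning $\mathbf P^6$): if one summand of $\mathcal E_A$ were negative, all sections of $\mathcal E_A$ would be proportional to a single subbundle and $\det_A$ would vanish, contradicting the surjectivity of $r\circ\det$. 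Both arguments exploit a different clause of Mukai's description of $\mathcal E$ (global generation versus surjectivity of $\det$); yours is arguably the more elementary and self-contained, while the paper's ties the lemma to Proposition 2.5 and to the linear normality of $A$, which it needs again later. Your degree computation $\mathcal H\cdot A=(\mathcal L-M)(\mathcal L-2M)=14-8=6$ and the resulting $h^0(\mathcal E_A)=8$, $h^1(\mathcal E_A)=0$ are exactly right.
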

\begin{proof} Consider the commutative diagram
$$
\begin{CD}
{\wedge^2 H^0(\mathcal E)} @>{\wedge^2r}>>{\wedge^2 H^0(\mathcal E_A)} \\
@V{det}VV @V{det_A}VV \\
{H^0(det \ \mathcal E)} @>r>> {H^0(det \ \mathcal E_A)} \\
\end{CD}
$$
The restriction map $r$ is an isomorphism by 2.5 and $det$ is surjective. This implies $m, n \geq 0$: otherwise $det_A$ would be the zero map.
\end{proof}
Now we consider the surface $\mathbb P_A := \mathbf P \mathcal E^*_A$ and its tautological map 
$$ u_A: \mathbb P_A \to \mathbf P^7 := \mathbf PH^0(\mathcal E_A)^*. $$ 
  
Since $m$ and $n$ are non negative, $u_A$ is a generically injective morphism with image a rational normal scroll of degree $6$. For it we fix the notation
$$
R := u_A(\mathbb P_A).
$$
We can assume $m \leq n$. Notice that $m$ is the minimal degree of a section of $\mathbb P_A$. If $m = 0$ then $R$ is a cone over a rational normal sextic. If $m \geq 1$ then $u_A$ is an embedding. We consider the standard exact sequence
$$
0 \to \mathcal E(-A) \to \mathcal E \to \mathcal E_A \to 0.
$$
\begin{lemma} The associated long exact sequence is the following:
$$
0 \to H^0(\mathcal E) \to H^0(\mathcal E_A) \stackrel {\delta_A} \to H^1(\mathcal E(-A)) \to 0.
$$
In particular one has $h^0(\mathcal E) = 5$, $h^0(\mathcal E_A) = 8$ and $h^1(\mathcal E(-A)) = 3$.
\end{lemma}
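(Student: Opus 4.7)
The plan is to derive the stated sequence directly from the long exact cohomology sequence associated to
$$
0 \to \mathcal E(-A) \to \mathcal E \to \mathcal E_A \to 0,
$$
so the whole lemma reduces to two vanishing statements: $H^0(\mathcal E(-A)) = 0$ and $H^1(\mathcal E) = 0$. Once these are in hand, the sequence truncates to the three-term form claimed, and a dimension count using the value of $h^0(\mathcal E_A)$ obtained in Lemma 3.2 gives $h^1(\mathcal E(-A))$.

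First I would pin down $h^0(\mathcal E) = 5$ and the full vanishing $h^i(\mathcal E) = 0$ for $i \geq 1$; both are part of Mukai's defining properties (ii) of $\mathcal E$ recorded at the beginning of the section. In particular $H^1(\mathcal E) = 0$, which is what is needed for exactness on the right. Next, from Lemma 3.2 the restriction splits as $\mathcal E_A \cong \mathcal O_{\mathbf P^1}(m) \oplus \mathcal O_{\mathbf P^1}(n)$ with $m, n \geq 0$ and $m+n = 6$, so $h^0(\mathcal E_A) = (m+1) + (n+1) = 8$ and $h^1(\mathcal E_A) = 0$.

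The main content is therefore the vanishing $H^0(\mathcal E(-A)) = 0$. I would argue by $\mathcal H$-stability of $\mathcal E$. A nonzero section of $\mathcal E(-A)$ is equivalent to a line-bundle injection $\mathcal O_S(A) \hookrightarrow \mathcal E$; its saturation is a rank-one subbundle $\mathcal O_S(A+E) \hookrightarrow \mathcal E$ with $E$ effective (possibly zero). Since $A \subset S \subset \mathbf P^6$ is the rational normal sextic embedded by $\mathcal H$, one has $A \cdot \mathcal H = 6$, and $E \cdot \mathcal H \geq 0$ because $\mathcal H$ is pseudoample. Thus
$$
(A+E)\cdot \mathcal H \;\geq\; 6 \;>\; 5 \;=\; \tfrac12\,\mathcal H^2 \;=\; \mu_{\mathcal H}(\mathcal E),
$$
contradicting $\mathcal H$-stability of $\mathcal E$. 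This is the only step with any substance; it is where the specific value $\mathcal H^2 = 10$ and the geometric fact $A\cdot \mathcal H = 6$ enter.

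Assembling everything, the long exact sequence becomes
$$
0 \to H^0(\mathcal E) \to H^0(\mathcal E_A) \to H^1(\mathcal E(-A)) \to 0,
$$
and the three dimensions read off as $5$, $8$, and $h^1(\mathcal E(-A)) = 8 - 5 = 3$, as asserted. I expect no genuine obstacle beyond verifying the stability argument; the rest is bookkeeping against Mukai's properties and the preceding lemma.
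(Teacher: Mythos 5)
Your proposal is correct and follows essentially the same route as the paper: both reduce the lemma to the vanishing $h^0(\mathcal E(-A)) = 0$ via $\mathcal H$-stability (the paper notes $H(H-2A) = -2 < 0$ so the stable twist $\mathcal E(-A)$ has negative slope; you equivalently exhibit a destabilizing sub-line-bundle $\mathcal O_S(A+E)$ of slope $\geq 6 > 5$), combined with Mukai's $h^i(\mathcal E) = 0$ for $i \geq 1$ and $h^1(\mathcal E_A) = 0$ from the preceding lemma. The dimension count is then identical.
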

\begin{proof}
Since $\mathcal E(-A)$ is $H$-stable and $H(H-2A) < 0$, it follows $h^0(\mathcal E(-A)) = 0$. Furthermore we know that $h^i(\mathcal E) = 0$ for $i \geq 1$ and
we have $h^1(\mathcal E_A) = 0$ because $m,n \geq 0$. This implies the statement. \end{proof}
Then the coboundary map $\partial_A: H^0(\mathcal E_A) \to H^1(\mathcal E(-A))$ defines a plane
$$ P_A := \mathbf P Im \ \partial_A^* \subset \mathbf P^7.$$
\par Let $\mathbf P^4 := \mathbf PH^0(\mathcal E)^*$. Then, dualizing the sequence and projectivizing the maps,  we  obtain the linear projection
$$
\alpha_A: \mathbf P^7 \to \mathbf P^4 := \mathbf PH^0(\mathcal E)^*,
$$
of center $P_A$. Let $\mathbb P_S := \mathbf P \mathcal E^*$, in turn $\alpha_A$ defines the commutative diagram
$$
\begin{CD}
{\mathbf P^7} @>{\alpha_A}>> {\mathbf P^4} \\
@A{u_A}AA @A{u_S}AA \\
{\mathbb F_A} @>i_{\mathbb F}>> {\mathbb F_S} \\
\end{CD}
$$
where   $i_{\mathbb F}$ is the inclusion $\mathbb F_A \subset \mathbb F_S$ and the vertical arrows are the tautological maps. Furthermore let $G(2,8)$ be the Pl\"ucker embedding of the Grassmannian of $2$-dimensional subspaces in $H^0(\mathcal E_A)^*$ and let $l \subset \mathbf P^7$ be a  general line. Then the assignement $l \longrightarrow \alpha_A(l)$ defines a natural linear projection
$$
\lambda_A: G(2,8) \to G(1,4).
$$
It follows immediately from the previous diagram that
\begin{proposition} The next diagram is commutative:
$$
\begin{CD}
{G(2,8)} @>{\lambda_A}>> {G(1,4)} \\
@A{f_{\mathcal E_A}}AA @A{f_{\mathcal E}}AA \\
{A} @>i>> {S} \\
\end{CD}
$$
\end{proposition}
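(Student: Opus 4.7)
The plan is to deduce the commutativity directly by restricting the square $\alpha_A \circ u_A = u_S \circ i_{\mathbb F}$ already displayed to the fibers of the projective bundles over $A$. Concretely, for a point $a \in A$, I would first identify the three objects in play: $f_{\mathcal E_A}(a)$ is, by construction of $u_A$, the line $L_a \subset \mathbf P^7$ which is the image of the fiber $u_A^{-1}(a) = \mathbf P\mathcal E_A(a)^*$ under the embedding dual to evaluation $ev_a^A : H^0(\mathcal E_A) \to \mathcal E_A(a) = \mathcal E(a)$; similarly $f_{\mathcal E}(i(a))$ is the line $L'_a \subset \mathbf P^4$ cut out fiberwise by $u_S$; and $\lambda_A(L_a)$ is, by definition, the line $\alpha_A(L_a)$ whenever this image is actually a line.

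The second step is to verify that $\lambda_A$ is defined at $L_a$, i.e.\ that $L_a \cap P_A = \emptyset$. Since $\alpha_A$ is the projectivization of the dual of the restriction $r : H^0(\mathcal E) \to H^0(\mathcal E_A)$, one has $P_A = \mathbf P(\Ker r^*)$. Evaluation factors as $ev_a^S = ev_a^A \circ r$; dualizing, the composition
\[
\mathcal E(a)^* \hookrightarrow H^0(\mathcal E_A)^* \xrightarrow{r^*} H^0(\mathcal E)^*
\]
coincides with the natural inclusion dual to the surjection $ev_a^S$, which is injective because $\mathcal E$ is globally generated. Hence $r^*$ is injective on the subspace $\mathcal E(a)^*$ defining $L_a$, so $L_a$ avoids $P_A$ and $\alpha_A$ restricts to a linear isomorphism of $L_a$ onto its image line.

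Putting these together, $\alpha_A(L_a)$ is precisely the projectivization of the image of $\mathcal E(a)^* \hookrightarrow H^0(\mathcal E)^*$, i.e.\ the line $L'_a = f_{\mathcal E}(a)$. Consequently
\[
\lambda_A(f_{\mathcal E_A}(a)) \; = \; \alpha_A(L_a) \; = \; L'_a \; = \; f_{\mathcal E}(i(a))
\]
for every $a \in A$, which is the claimed commutativity. I anticipate no real obstacle here: the proposition is a fiberwise translation of the commutative square $\alpha_A \circ u_A = u_S \circ i_{\mathbb F}$ already established, and the only point that needs explicit attention is that $\alpha_A$ sends the fiber line to a line rather than collapsing it to a point, which is exactly what global generation of $\mathcal E$ guarantees.
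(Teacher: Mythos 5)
Your argument is correct and takes essentially the same route as the paper, which offers no proof beyond the remark that the proposition ``follows immediately from the previous diagram'' $\alpha_A \circ u_A = u_S \circ i_{\mathbb F}$. Your fiberwise translation of that square, including the check via global generation of $\mathcal E$ that the fiber line $L_a$ misses the center $P_A$ so that $\alpha_A$ carries it isomorphically onto $f_{\mathcal E}(a)$, simply makes explicit the details the paper leaves to the reader.
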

Here  $i$ is the inclusion map and $f_{\mathcal E_A}$ and $f_{\mathcal E}$ are the maps associated to $\mathcal E_A$ and $\mathcal E$.  We will profit of this construction in the next section,
where the very special feature of the projection $\alpha_A$ will be described.  For the moment we use the previous remarks to describe $\mathcal E_A$  for a general $S$.
\begin{theorem} For a general Nikulin surface of genus $8$ one has $$ \mathcal E_A  = \mathcal O_{\mathbf P^1}(3) \oplus \mathcal O_{\mathbf P^1}(3). $$
\end{theorem}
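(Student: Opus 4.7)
My plan is to rule out unbalanced splittings of $\mathcal{E}_A$ by applying the Lazarsfeld-Mukai construction to an auxiliary rank-$2$ sheaf. Write $\mathcal{E}_A = \mathcal{O}_A(m) \oplus \mathcal{O}_A(n)$ with $0 \leq m \leq n$ and $m + n = 6$; I must show $m \geq 3$. Suppose for contradiction $m \leq 2$, and let $\mathcal{F} := \ker(\mathcal{E} \to \mathcal{O}_A(m))$, where the surjection is the composition of $\mathcal{E} \to \mathcal{E}_A$ with the projection onto the lower-degree summand. Then $\mathcal{F}$ is a rank-$2$ locally free sheaf on $S$ with $c_1(\mathcal{F}) = \mathcal{H} - A = M$, and a Riemann-Roch computation (using $\chi(\mathcal{O}_A(m)) = m + 1$ and $\mathcal{H}\cdot A = 6$) gives $c_2(\mathcal{F}) = m - 2$.

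From the long exact cohomology sequence of $0 \to \mathcal{F} \to \mathcal{E} \to \mathcal{O}_A(m) \to 0$, together with $h^0(\mathcal{E}) = 5$ and $h^0(\mathcal{O}_A(m)) = m + 1$, I obtain $h^0(\mathcal{F}) \geq 4 - m \geq 2$. So $\mathcal{F}$ carries at least two independent global sections.

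The key step is to run the Lazarsfeld-Mukai construction on a nonzero $s \in H^0(\mathcal{F})$: saturating gives
\[
0 \to \mathcal{O}_S(D) \to \mathcal{F} \to \mathcal{I}_Z \otimes \mathcal{O}_S(M - D) \to 0,
\]
with $D$ effective (the divisorial part of the zero locus of $s$) and $Z$ zero-dimensional, where the Chern class relation yields $c_2(\mathcal{F}) = M \cdot D - D^2 + |Z|$. The composition $\mathcal{O}_S(D) \hookrightarrow \mathcal{F} \subset \mathcal{E}$ together with $\mathcal{H}$-stability of the Mukai bundle forces $D \cdot \mathcal{H} \leq 4$. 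Using Theorem 2.2 ($\Pic S = \mathbb{Z} \mathcal{L} \oplus \mathbb{L}_S$ for general $S$) and that the $N_i$ are the only irreducible $(-2)$-curves in $\mathbb{L}_S$, every such effective $D$ must be of the form $\sum c_i N_i$ with $c_i \geq 0$. A direct check gives $k := M \cdot D - D^2 = \sum c_i (2c_i - 1) \geq 0$, with equality iff $D = 0$. The relation $|Z| = (m-2) - k \geq 0$ then rules out $m \in \{0, 1\}$ outright.

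For $m = 2$, we are forced into $D = 0$ and $Z = \emptyset$, so $\mathcal{F}$ is an extension of $\mathcal{O}_S(M)$ by $\mathcal{O}_S$. But $\Ext^1(\mathcal{O}_S(M), \mathcal{O}_S) = H^1(\mathcal{O}_S(-M)) = 0$: by Serre duality this equals $h^1(\mathcal{O}_S(M))$, which vanishes by Lemma 2.1. Hence $\mathcal{F} \cong \mathcal{O}_S \oplus \mathcal{O}_S(M)$, so $h^0(\mathcal{F}) = 1 + h^0(\mathcal{O}_S(M)) = 1$, contradicting $h^0(\mathcal{F}) \geq 2$. The main technical subtlety I foresee is verifying that effective divisors of $\mathcal{H}$-degree $\leq 4$ on a general Nikulin surface are non-negative sums of the $N_i$'s; this reduces to a standard analysis of $(-2)$-classes in the Nikulin lattice $\mathbb{L}_S$, using that $\mathcal{L}$ and $\mathcal{A}$ both have $\mathcal{H}$-degree strictly larger than $4$.
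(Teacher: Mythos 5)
Your proof is correct, and it takes a genuinely different route from the paper. The paper argues geometrically in $G(1,4)$: it interprets the cases $m=0,1,2$ as the projected scroll $R'=\alpha_A(R)\subset\mathbf P^4$ being a cone, or containing a line or conic directrix, and kills each case by Schubert-cycle considerations ($A$ would be degenerate in $\sigma_o$; $T$ would equal the four-dimensional cycle $\sigma_L$; the ruling of quadrics in $\sigma_P$ would cut a pencil $\vert D\vert$ with $D^2=0$, $DH=4$ on $S$, excluded by the Picard lattice). You instead work entirely on $S$ with the elementary modification $\mathcal F=\ker(\mathcal E\to\mathcal O_A(m))$, computing $c_1(\mathcal F)=M$, $c_2(\mathcal F)=m-2$ and $h^0(\mathcal F)\ge 4-m$, then running the standard section analysis: the divisorial part $D$ of the zero locus of a section satisfies $D\cdot\mathcal H\le 4$ by $\mathcal H$-stability of $\mathcal E$, hence $D=\sum c_iN_i$ with $c_i\ge 0$, whence $c_2(\mathcal F)=\sum c_i(2c_i-1)+\vert Z\vert\ge 0$; this kills $m\le 1$, and the splitting of the resulting extension (via $h^1(\mathcal O_S(-M))=0$ from Lemma 2.1) kills $m=2$. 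All the numerical claims check out ($c_2(\mathcal E)=4$ gives $ch_2(\mathcal F)=-m$ and $c_2(\mathcal F)=m-2$; the classification of effective divisors of $\mathcal H$-degree $\le 4$ as non-negative integral sums of the $N_i$ is a routine computation in $\mathbb Z\mathcal L\oplus\mathbb L_S$ of exactly the kind the paper itself invokes and omits). The trade-off: your argument is more intrinsic and self-contained, resting only on stability, Lemma 2.1 and Theorem 2.2, while the paper's argument, though it leans on unstated Schubert-calculus facts, simultaneously develops the geometry of $R'$, $\sigma_P$ and $T$ that is reused in Sections 4--6; your proof does not produce that auxiliary geometric information, but as a proof of the splitting statement alone it is complete.
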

\begin{proof} We have $\mathcal E_A = \mathcal O_{\mathbf P^1}(m) \oplus \mathcal O_{\mathbf P^1}(n)$ with $0 \leq m \leq n \leq 6$ and $m + n = 6$. It suffices to show that
$R$ is not a cone and that no rational section of degree $1$ or $2$ is contained in it. This indeed implies $m = 3$. To this purpose consider the projected scroll
$R' = \alpha_A(R)$. Since $A$ is embedded in $G(1,4)$ as an integral sextic curve, the degree of $R'$ is six. For any integral variety $Y \subset \mathbf P^4$ we denote by
$\sigma_Y$ the variety in $G(1,4)$ parametrizing the lines intersecting $Y$. Let us exclude the cases $0 \leq m \leq 2$.
\par
$m = 0$. Then the scroll $R'$ is a cone of vertex $o$ and $A$ is contained in $\sigma_o$. But $\sigma_o$ is a linear space of dimension four and $A$ would be a degenerate curve in it, which is excluded.
 \par
$m = 1$. In this case $R'$ contains a line $L$ intersecting every line of its ruling. Consider $\sigma_L$: it is well known that $\sigma_L$ is a cone of vertex a point $l$ over the
Segre embedding $\mathbf P^1 \times \mathbf P^2 \subset  \mathbf P^5$. Since $A \subset \sigma_L$ it follows that $\sigma_L \subset \mathbf P^6 = < A >$. Moreover $\mathbf P^6$  
is the linear space tangent to $G(1,4)$ at the parameter point of $L$. But then $T = \sigma_L$: a contradiction. 
\par
$m = 2$. We can assume that $R'$ contains a smooth conic $K$ intersecting all the lines of the ruling of $R'$. Let $P$ be the supporting plane of $K$, then $S$ is contained in
the codimension $1$ Schubert cycle $\sigma_P$. This is endowed with a ruling of $4$-dimensional smooth quadrics having the dual plane $P^*$ as the base locus. Every element of 
such a ruling is the Pl\"ucker embedding of the Grassmannian of the lines contained in a hyperplane through $P$. Notice also  that $Sing \ \sigma_P = P^*$. Then, since $S$ is a 
smooth complete intersection of three hyperplane sections of $G(1,4)$ and of a quadric section, it follows that $S \cap P^* = \emptyset$.  But then this ruling of quadrics of  $\sigma_P$ cuts
on $S$ a base pont free pencil $\vert D \vert$ such that $D^2 = 0$ and $DH = 4$. This is excluded again by a standard computation in the Picard lattice of a general Nikulin surface. 
\end{proof} \par
\section{\small Nikulin surfaces of genus 8 and symmetric cubic threefolds}
In what follows a symmetric cubic threefold is just a cubic hypersurface in $\mathbf P^4$  whose equation is the determinant of a symmetric $3 \times 3$ matrix of linear forms. As is well known
the family of symmetric cubic threefolds is irreducible and its quotient under the action of $PGL(5)$ has finitely many orbits. One of them is open and it is the projective equivalence class of
$\Sec B$, where $B$ is a rational normal quartic in $\mathbf P^4$. We will say that $\Sec B$ is \it the symmetric cubic threefold, \rm Moreover we fix $B$ and  the notation
$$
V := \Sec B.
$$
\par The symmetric cubic threefold is nicely related to special embeddings $A \subset G(1,4)$ of a rational normal sextic and to the family of Nikulin surfaces of genus $8$. To see this we go back
to the previous section, keeping the same notation. Since $S$ is a general Nikulin surface of genus $8$ we will assume, by 3.5 , that $\mathbb F_A$ is the Hirzebruch surface
$\mathbb F_0$ i.e. $\mathbb F_A = \mathbf P^1 \times \mathbf P^1$. We consider again the linear projection 
 $$
\alpha_A: R \to \mathbf P^4
$$
which is uniquely defined by $S$.  Fixing the sextic rational normal scroll $R \subset \mathbf P^7$, the family of the linear maps $\beta: R \to \mathbf P^4$ is parametrized by  the Grassmannian
$G(5,8)$. It follows from double points formula that:
\begin{lemma} Let $\beta: R \to \mathbf P^4$ be a general linear projection. Then $\beta$ is a generically injective morphism and $\Sing \beta(R)$ is a set of six non normal double points with two branches.
 \end{lemma}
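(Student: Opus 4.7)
The plan is to apply the double-point formula to $\beta\colon R\to \mathbf P^4$ and then certify by a dimension count that the resulting singularities of $\beta(R)$ are ordinary nodes.

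\emph{Setup.} By Theorem 3.5, for a general Nikulin surface $\mathcal E_A\cong \mathcal O(3)\oplus\mathcal O(3)$, so $R = \mathbf P\mathcal E_A^*$ is the balanced rational normal sextic scroll $S(3,3)\simeq \mathbf P^1\times\mathbf P^1$ tautologically embedded in $\mathbf P^7$ by $|\mathcal O(1,3)|$, with invariants $H_R^2=6$, $H_R\cdot K_R=-8$, $K_R^2=8$, $c_2(T_R)=4$. Since $\Sec R\subset \mathbf P^7$ is an irreducible subvariety of the expected dimension $5$, a generic center $\Lambda\cong\mathbf P^2$ is disjoint from $R$ and meets $\Sec R$ in only finitely many points, so $\beta$ is a finite morphism, generically one-to-one onto its image, and $\beta(R)$ still has degree $6$.

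\emph{The count.} The double-point class in $A_0(R)$ is
$$
[\mathbb D(\beta)]=\beta^*\beta_*[R]-c_2\!\bigl(\beta^*T_{\mathbf P^4}-T_R\bigr).
$$
The first term equals $d\cdot H_R^2 = 36$. Using $c(T_{\mathbf P^4})=(1+H)^5$ together with the numerical invariants of $R$,
$$
c_2\!\bigl(\beta^*T_{\mathbf P^4}-T_R\bigr)=10H_R^2+5H_R\cdot K_R+K_R^2-c_2(T_R)=60-40+8-4=24,
$$
so $\deg[\mathbb D(\beta)]=12$. Since each isolated ordinary double point contributes its two preimages to $\mathbb D(\beta)$, the image $\beta(R)$ carries exactly $6$ such points.

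\emph{Branch structure and main obstacle.} It remains to rule out pinch points and triple points, so that each of the six singular points of $\beta(R)$ is indeed a non-normal double point with two smooth transverse branches. The tangential variety $\mathrm{Tan}(R)$ has dimension at most $4$ in $\mathbf P^7$, hence a generic plane $\Lambda$ is disjoint from it and no pinch points appear. The more delicate step, which is the main technical obstacle, is to show that $R$ has no \emph{proper} trisecant lines: presenting $R$ as the image of $\mathbf P^1\times\mathbf P^1\to\mathbf P^1\times\mathbf P^3\to\mathbf P^7$ (Segre after Veronese on the second factor), a short linear-algebra analysis in $\mathbf C^2\otimes\mathbf C^4$ shows that any three collinear points of $R$ must share the same first coordinate, so the putative trisecant is a ruling fiber of $R$, contained in $R$ itself and therefore not a proper trisecant. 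Once this is in hand, every singular point of $\beta(R)$ arises from a simple bisecant meeting $\Lambda$ in one point, and its two smooth preimages on $R$ have complementary tangent planes, yielding the required transverse crossing of two smooth sheets.
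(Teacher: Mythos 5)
Your argument is correct and is essentially the paper's own (the paper offers no proof beyond the phrase ``it follows from the double points formula''): with $H_R^2=6$, $H_R\cdot K_R=-8$, $K_R^2=8$, $c_2(T_R)=4$ the count $36-24=12$ in the double-point class does give six double points, and your verification that a general center avoids the tangent variety and that the only trisecants of the quadratically-generated scroll $R$ are its rulings correctly rules out pinch points and triple points. The only caveat is your closing claim that the two branches have complementary tangent planes: this is not asserted in the lemma and sits in tension with the paper's immediately following remark that each such double point has embedding dimension $3$ and quadratic tangent cone of rank two, so you may wish to either justify or drop that final sentence.
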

Actually each point $o \in \Sing \beta(R)$ has embedding dimension $3$, since $R$ is a scroll, and quadratic tangent cone of rank two. It is easy to see that this is not the case for $\alpha_A: R \to \mathbf P^4$ and that the projected surface $ R' = \alpha_A(R) \subset \mathbf P^4$ has an interesting feature. Indeed we have
$$
A \subset S \subset T = G(1,4) \cap \mathbf P^6
$$
where the Nikulin surface $S$ is general. Then $S$ contains the \it eight \rm disjoint lines $N_1 \dots N_8$. Let $i = 1 \dots 8$, we observe that $N_i$ parametrizes a pencil of lines in $\mathbf P^4$. Moreover $AN_i = 2$ so that $N_i$ is
a bisecant line to $A$. \par On the other hand $R'$ is precisely the union of the lines parametrized by $A$. Let $o_i$ be the center of the pencil of lines parametrized by $N_i$ and let $Z_i = A \cdot N_i$.
Since $S$ is general  $Z_i$ is a $0$-dimensional scheme of length $2$ in $S$, moreover the next lemma is immediate
\begin{lemma} $\alpha_A/Z_i : Z_i \to \mathbf P^4$ contracts $Z_i$ to the point $o_i$. \end{lemma}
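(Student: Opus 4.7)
The plan is to chase through the commutative diagram of Proposition~3.4 together with the standard projective geometry of lines on the Plücker-embedded $G(1,4)$.

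First I would unwind Proposition~3.4 explicitly. The identity $f_{\mathcal E}\circ i=\lambda_A\circ f_{\mathcal E_A}$ means, after translating from the Grassmannians back to the ambient projective spaces, that for every point $a\in A$ the line of $\mathbf P^4=\mathbf PH^0(\mathcal E)^*$ parametrised by $f_{\mathcal E}(a)\in G(1,4)$ coincides with the image under $\alpha_A$ of the ruling of the scroll $R\subset\mathbf P^7$ over $a$. Thus, locating where $\alpha_A$ sends (the natural lift of) $Z_i$ reduces to identifying the two lines of $\mathbf P^4$ that the two points of $Z_i\subset A$ carry in $G(1,4)$.

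Next I would use that both points of $Z_i$ lie by construction on $N_i\subset S\subset G(1,4)$. Since $N_i$ is a line in the Plücker embedding of the Grassmannian of lines of $\mathbf P^4$, it parametrises a pencil of lines of $\mathbf P^4$ sharing a single common point, which is precisely $o_i$ by definition. The two lines attached to $Z_i$ lie in this pencil and so both pass through $o_i$. Taking the section of $R\to A$ that, on each of the two distinguished rulings over $Z_i$, marks the unique point mapping to $o_i$ under $\alpha_A$, produces a length-two lift of $Z_i$ to $R\subset\mathbf P^7$; by construction $\alpha_A$ contracts this lift to $o_i$, giving the claimed statement.

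The only technical point requiring a sentence of care is the scheme-theoretic assertion when $Z_i$ is non-reduced, i.e.\ when $N_i$ is tangent to $A$. For a general Nikulin surface $S$ one may assume $Z_i$ is reduced and the above argument is transparent; in the tangential case the pencil identification along $N_i$ remains valid to first order, so the image is still concentrated at $o_i$. No serious obstacle is expected.
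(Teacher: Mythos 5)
Your argument is correct and is precisely the reasoning the paper treats as immediate (it offers no proof of this lemma): the two points of $Z_i$ lie on the line $N_i\subset G(1,4)$, which parametrizes a pencil of lines of $\mathbf P^4$ with common center $o_i$, so the two corresponding rulings of $R$ each contain a unique point over $o_i$ and the resulting length-two lift is contracted by $\alpha_A$. Your added care about the identification via Proposition~3.4 and about the non-reduced (tangential) case is consistent with how the lemma is used afterwards, where only the length count of the contracted scheme $Z=\cup Z_i$ matters.
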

The lemma implies that $\alpha_A$ contracts the scheme $Z = \cup Z_i$,  of length $16$, to a scheme of length $8$ supported on the points $o_1 \dots o_8$. By the previous double points formula this is impossible if 
$\alpha_A$ is not an embedding on at most finitely many points. Hence $\Sing  R'$ contains a curve. \par  At first we observe that:
\begin{lemma} $\Sing R'$ is an integral curve of degree $m$ with $3 \leq m \leq 4$. \end{lemma}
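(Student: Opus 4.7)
The plan is to split the proof into three parts: establishing $\dim \Sing R' = 1$; bounding $\deg \Sing R' \le 4$ via a hyperplane-section argument; and bounding $\deg \Sing R' \ge 3$ together with integrality by a bidegree analysis of the preimage on $R$.

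For the positive-dimension claim, Lemma 4.1 says that a general linear projection of $R$ to $\mathbf P^4$ has exactly six non-normal double points, so its double-point $0$-cycle on $R$ has total length $12$. Lemma 4.2, however, identifies eight ordinary double points $o_1, \dots, o_8$ of $R'$, each a genuine node (the two smooth branches at $o_i$ being the lines $\ell_a$ for the two distinct $a \in Z_i$). These eight nodes already account for length $16$ in the double-point locus of $\alpha_A$, exceeding the generic value $12$. Since the length of the $0$-dimensional double-point class is a deformation invariant on the open stratum of generic projections, this excess can only be absorbed by a positive-dimensional component; thus $\Sing R'$ contains a curve, call it $D$.

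To bound $m := \deg D$ from above, take a general hyperplane $H \subset \mathbf P^4$ and set $C := H \cap R'$. Since $R'$ is irreducible and spans $\mathbf P^4$ (being the linear projection of the non-degenerate scroll $R \subset \mathbf P^7$), $C$ is irreducible and non-degenerate of degree $6$ in $H \cong \mathbf P^3$. Its normalization $\alpha_A^{-1}(H) \cap R$ is a smooth rational $(3,1)$-curve on $R = \mathbf P^1 \times \mathbf P^1$; so $C$ is a rational sextic in $\mathbf P^3$ whose only singularities are $m$ ordinary nodes at the transverse cuts $H \cap D$, giving $p_a(C) = m$. Castelnuovo's bound for non-degenerate irreducible curves of degree $6$ in $\mathbf P^3$ yields $p_a(C) \le 4$, whence $m \le 4$.

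For the lower bound and integrality, let $\tilde D \subset R$ be the preimage of $D$ under normalization, of bidegree $(\alpha, \beta)$ with $2m = \alpha + 3\beta$. Since each ruling of $R$ is embedded by $\alpha_A$ as a line of $R'$, no component of $\tilde D$ can be a ruling; hence $\beta \ge 1$. This immediately excludes $m = 1$ (which would force $(\alpha, \beta) = (2, 0)$) and the subcase $(4, 0)$ of $m = 2$. The only remaining possibility with $m = 2$ is $(\alpha, \beta) = (1, 1)$: then $\tilde D$ is a smooth rational section of both projections $R \to \mathbf P^1$, and $\alpha_A|_{\tilde D}$ is a $2$-to-$1$ cover $\tilde D \to D$ whose deck involution pulls back to an involution $\phi$ of $A \cong \mathbf P^1$ whose orbits contain all eight pairs $Z_1, \dots, Z_8$. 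Identifying $\mathrm{Sym}^2 A \cong \mathbf P^2$, the orbits of any involution of $A$ form a line in $\mathbf P^2$; thus $\phi$ would force the eight points $[Z_i] \in \mathbf P^2$ to be collinear, a non-generic condition ruled out for a general Nikulin surface. A similar bidegree bookkeeping applied to any putative splitting $\tilde D = \tilde D_1 \cup \tilde D_2$ forces one component to reduce to a union of rulings, again excluded, so $D$ is integral.

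The principal obstacle is the lower-bound step: the exclusion of $(\alpha, \beta) = (1, 1)$ rests on the genericity assertion that $[Z_1], \dots, [Z_8] \in \mathrm{Sym}^2 A \cong \mathbf P^2$ fail to be collinear for a general $S$, which in turn rests on a lattice-theoretic analysis of $\Pic S$ together with the geometry of $N_i \cap A$.
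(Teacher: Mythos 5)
Two of your three steps coincide with the paper's. The double-point-formula argument showing that $\Sing R'$ must contain a curve is precisely the paragraph preceding the lemma, and the upper bound $m \le 4$ is obtained in the paper exactly as you obtain it: a general hyperplane section of $R'$ is an integral (rational) sextic in $\mathbf P^3$, hence has at most $4$ singular points. Where you genuinely diverge is the lower bound and the integrality, which the paper gets at once from the proof of Theorem 3.5: $R'$ is not a cone and carries no unisecant curve of degree $1$ or $2$, so every admissible component of $\Sing R'$ has degree at least $3$; since the total degree is at most $4$, there is exactly one component, of degree $3$ or $4$.

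Your substitute argument has a genuine gap, which you partly acknowledge. The case $(\alpha,\beta)=(1,1)$ is reduced to the claim that the eight points $[Z_i]\in\mathrm{Sym}^2 A$ are not collinear for a general $S$; you give no proof, and it is not clear how to extract this from $\Pic S$, since collinearity in $\mathrm{Sym}^2 A$ is not visibly a divisor-class condition. The statement is in fact true --- in the eventual picture of Sections 5 and 6 the divisors $Z_i$ are cut on the conic $A\subset\Hilb_2(B)$ by the tangent lines of the diagonal conic, so the $[Z_i]$ lie on a smooth conic of $\mathrm{Sym}^2 A$ --- but that description is exactly what the paper is building towards, so invoking it here would be circular. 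The efficient repair is to observe that in your $(1,1)$ scenario $\tilde D$ is a section of the ruling mapping $2:1$ onto an irreducible conic $K\subset R'$ which meets every line of the ruling; this is precisely the configuration excluded in case $m=2$ of the proof of Theorem 3.5, where such a $K$ forces $S$ into the codimension-one Schubert cycle $\sigma_P$ of its supporting plane and yields a base-point-free pencil $\vert D\vert$ with $D^2=0$, $DH=4$, impossible in $\mathbb Z\mathcal L\oplus\mathbb L_S$. Two further soft spots: your exclusion of ruling components of $\tilde D$ needs a sentence (a ruling lying in $\Sing R'$ would either meet infinitely many other rulings, hence be a degree-one directrix, again excluded by Theorem 3.5, or be a locus of non-immersion of $\alpha_A$, which must be ruled out separately), and your involution argument silently assumes that the eight nodes $o_i$ lie on the one-dimensional part of $\Sing R'$ rather than being isolated singular points, which is not yet known at this stage.
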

\begin{proof} By the proof of theorem 3.5 $\alpha_A: R \to \mathbf P^4$ is a generically injective morphism, moreover $R'$ is not a cone nor contains curves of degree $ \leq 2$. On the other hand a general hyperplane section of $R'$ is an integral sextic curve in $\mathbf P^3$, hence the number of its singular points is $\leq 4$. \end{proof}
It is clear that $\alpha_A: R \to R'$ is the normalization map of $R'$. Let $\tilde B$ be the pull-back of $\Sing R'$ by $\alpha_E$, then $\tilde B$ is a curve of type $(a,b)$ in $R = \mathbf P^1 \times \mathbf P^1$ such that $a + 3b = 2m$. It is easy to compute the only possible cases:
  \begin{enumerate}
 \item $m = 4$ and $(a,b) = (2,2)$
 \item $m = 4$ and $(a,b) = (5,1)$
 \item $m = 3$ and $(a,b) = (3,1)$
 \end{enumerate}
  \par  Note that the type $(2,2)$ in case (1) implies that each line of $R'$ is bisecant to $\Sing R'$.  We do not discuss these cases. We only state as a \it claim \rm the next theorem,
  which describes what happens in the case of our interest.
  \begin{theorem} For a general Nikulin surface $S$ as above we have:
\begin{itemize}
\item[$\circ$] $\Sing R'$ is a rational normal quartic curve $B$ and $R'$ is the complete intersection of $V = \Sec B$ and a quadric through $B$.
 \end{itemize}
Moreover:
\begin{itemize}
\item[$\circ$] $R$ is $\mathbf P^1 \times \mathbf P^1$ and $\tilde B$ is a smooth curve of type $(2,2)$ in it,
\item[$\circ$] $T = < A > \cdot G(1,4)$ is a smooth quintic Del Pezzo threefold.
 \end{itemize} \end{theorem}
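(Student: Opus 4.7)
The plan proceeds in four steps, the crux being to establish the bidegree of $\tilde B$. Theorem 3.5 already gives $R \simeq \mathbf P^1 \times \mathbf P^1$, so it remains to exclude the cases $(a,b) = (5,1)$ and $(3,1)$ in favor of $(2,2)$. In both excluded cases the second bidegree entry equals $1$, so the bundle projection $\tilde B \to A$ is birational, making $\tilde B \simeq A \simeq \mathbf P^1$, and the $2{:}1$ normalization map $\alpha_A\vert_{\tilde B}\colon \tilde B \to \Sing R'$ then arises from an involution $\sigma$ on $\mathbf P^1$. The eight pairs $Z_i \subset A$ would all have to be $\sigma$-orbits. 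Since an involution of $\mathbf P^1$ is determined by any one of its orbits, configurations of $8$ pairs on $\mathbf P^1$ that are orbits of a common involution form a $7$-dimensional locus inside the $13$-dimensional space of $8$-tuples of unordered pairs on $\mathbf P^1$ modulo $\Aut(\mathbf P^1)$. A dimension count shows that the assignment $S \mapsto (A,Z_1,\dots,Z_8)/\Aut A$ from $\mathcal F^N_8$ (of dimension $11$) has image of dimension strictly greater than $7$, so for a general $S$ the eight pairs cannot all come from a single involution. This rules out cases (2) and (3); thus $\tilde B$ has bidegree $(2,2)$, and adjunction on $\mathbf P^1 \times \mathbf P^1$ makes it a smooth elliptic curve.

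Once the bidegree $(2,2)$ is in hand, the $2{:}1$ map from the elliptic $\tilde B$ onto $\Sing R'$ exhibits $\Sing R'$ as an irreducible rational curve of degree $4$ in $\mathbf P^4$. It spans $\mathbf P^4$: every ruling of $R'$ is a bisecant of $\Sing R'$ (fibers of $R \to A$ meet $\tilde B$ in two distinct points), so if $\Sing R'$ were contained in a hyperplane every ruling of $R'$, hence $R'$ itself, would lie there too, contradicting the non-degeneracy of $R' \subset \mathbf P^4$. Thus $B := \Sing R'$ is a rational normal quartic. Every ruling of $R'$ being a bisecant of $B$ then gives $R' \subseteq V = \Sec B$. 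A Hilbert-function estimate for the degree-$6$ surface $R'$ in $\mathbf P^4$ yields $h^0(\mathcal I_{R'}(2)) \geq 1$; any such quadric $Q$ automatically contains $B = \Sing R'$, and since $V \cap Q$ is a surface of degree $6$ containing the irreducible degree-$6$ surface $R'$, we conclude $R' = V \cap Q$.

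Finally, $T = \langle A \rangle \cap G(1,4)$ is a codimension-$3$ linear section of the degree-$5$ Pl\"ucker embedding of $G(1,4)$, so has degree $5$; its smoothness for a general Nikulin $S$ follows from the openness of smoothness combined with the existence (recalled in Section~3) of smooth $T$ for a general K3 surface of genus $6$ in case~(1), once one checks that the $11$-dimensional Nikulin locus is not entirely contained in the proper closed locus where $T$ becomes singular. The hardest step is the exclusion of cases (2) and (3) in the first paragraph: the rigorous verification that the map $S \mapsto (A, Z_1, \dots, Z_8)/\Aut A$ has image of dimension strictly greater than $7$ requires an infinitesimal variation argument on $\mathcal F^N_8$, showing that deformations of $S$ produce transverse variations of the eight pairs $Z_i$ away from the common-involution locus.
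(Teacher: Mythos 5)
Your approach is genuinely different from the paper's, but it has a real gap at exactly the point you yourself flag as ``the hardest step''. The paper does not prove this theorem directly from a general Nikulin surface: it states it as a claim and obtains it only as a byproduct of Sections 5--7, by running the construction backwards. One fixes the rational normal quartic $B$ and takes $A$ to be a general conic in $\mathbf P^2 = \Hilb_2(B)$, embedded in $G(1,4)$ as a sextic on the Veronese surface $Z$ of bisecant lines of $B$; for such an $A$ all three bullets hold essentially by construction ($R = \upsilon^*A$ is $\mathbf P^1 \times \mathbf P^1$ with $\tilde B = R \cdot E$ smooth of type $(2,2)$ by Bertini, $R' = \sigma_*R = V \cap Q$ by Lemma 5.3, and $T$ smooth by Proposition 5.8). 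Theorem 6.6 then shows that a general $S \in \vert \mathcal I_A(2)\vert$ is a genus $8$ Nikulin surface, and Section 7 shows this family dominates $\mathcal F^N_8$; dominance is what transfers the three properties to a general Nikulin surface. The genericity statements you need are thus obtained by exhibiting an explicit dominating family, not by an abstract dimension count.

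Your argument leaves the two decisive genericity claims unproven. First, the exclusion of the types $(5,1)$ and $(3,1)$ rests on the assertion that the image of $S \mapsto (A, Z_1, \dots, Z_8)/\Aut A$ has dimension strictly greater than $7$; you defer this to ``an infinitesimal variation argument'', but that argument is the entire content of the step: a priori the fibres of this map are uncontrolled and its image could be low-dimensional or even contained in the common-involution locus, and ruling this out requires either a deformation computation or an explicit example --- i.e., the paper's construction. Second, the smoothness of $T$ is reduced to ``one checks that the Nikulin locus is not contained in the locus where $T$ is singular''; since the genus $8$ Nikulin surfaces, repolarized by $\mathcal H$, form a codimension $8$ locus among genus $6$ K3 surfaces, openness of smoothness on the ambient family gives nothing, and again only an explicit smooth instance settles it. Two smaller points: adjunction on $\mathbf P^1 \times \mathbf P^1$ gives $p_a(\tilde B) = 1$ but neither smoothness nor irreducibility of $\tilde B$, which the statement asserts; and an involution of $\mathbf P^1$ is determined by two of its orbits, not one (your count of $7 = 2 + 8 - 3$ is nevertheless correct). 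The remaining steps --- non-degeneracy of $\Sing R'$ via the bisecancy of the rulings, hence $B$ a rational normal quartic with $R' \subseteq \Sec B$, and the identification $R' = V \cap Q$ by degree --- are sound.
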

 In the next section an integral family $\mathbb P$ of Nikulin surfaces of genus $8$ is constructed, whose general member satisfies the conditions stated above. Then $\mathbb P$ is used to prove
 the rationality of the moduli space $\mathcal F^N_8$. \par A byproduct of the proof is  that $\mathbb P$ dominates $\mathcal F^N_8$, hence the previous theorem follows. The construction of
 a surface in this family offers an explicit geometric construction of a general Nikulin surface in genus $8$.
 
  \section{Special rational normal sextics in the $G(1,4)$}
To construct the required family and to use it later,  we construct at first a family of rational normal sextics $A \subset G(1,4)$ which are specially embedded. To begin we fix
a rational normal quartic curve $B \subset \mathbf P^4$ and the symmetric cubic threefold $V := \Sec V$. Let us also fix the notation
$$
\mathbf P^2 := \Hilb_2(B).
$$
Any point $z \in \mathbf P^2$ is an effective divisor $b_z \in \Div B$ of degree two, the line
$$
V_z := < b_z > 
$$
is a bisecant line to $B$. We will denote its parameter point in $G(1,4)$ by $z$. Let $\mathbb I \subset \mathbf P^4 \times G(1,4)$ be the universal line over $G(1,4)$, in it we have 
$$
\tilde V := \lbrace (x,z) \in \mathbf P^4 \times G(1,4) \ / \ x \in V_z \rbrace. 
$$
$\tilde V$ is endowed with its natural projection $\upsilon: \tilde V \to G(1,4)$, we set
$$
Z := \upsilon(\tilde V).
$$
Then we have $Z \subset G(1,4) \subset \mathbf P^9$, the next property is standard.
\begin{lemma}  \ \par
\begin{enumerate}
\item $\upsilon: \tilde V \to Z$ is the projective bundle $\mathbf PT_{\mathbf P^2}^*$.
\item $Z$ is the $3$-Veronese embedding of $\mathbf P^2$.
\item $Z$ is embedded in $G(1,4)$ as a surface of cohomology class $(3,6)$.
\end{enumerate}
\end{lemma}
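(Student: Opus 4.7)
\medskip
The proof splits naturally into three independent computations organized around the morphism $\varphi \colon \mathbf P^2 = \Hilb_2 B \to G(1,4)$ sending a divisor $\{a,b\}$ to the Pl\"ucker point of the bisecant $\langle a,b\rangle$. First I would verify that $\varphi$ is a closed embedding: injectivity follows because a rational normal quartic in $\mathbf P^4$ admits no trisecant lines, so distinct degree-two divisors determine distinct bisecant spans, and a short differential computation in Veronese-parameterized Pl\"ucker coordinates shows that $\varphi$ is also an immersion along the diagonal of $\Hilb_2 B$. Hence $\varphi$ realizes $\mathbf P^2$ as the smooth surface $Z \subset G(1,4)$.

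\medskip
For part (2) the key computation is the pulled-back polarization. Fix $a \in B$ and consider the pencil $\ell_a := \{\{a,b\} : b \in B\} \subset \mathbf P^2$, which is a line in $\Hilb_2 B$. Its image $\varphi(\ell_a)$ parametrizes the bisecants through $a$, whose union is the cone over $B$ with vertex $a \in B$, a scroll of degree $\deg B - 1 = 3$. Hence $\varphi(\ell_a)$ has Pl\"ucker degree $3$ and $\varphi^* \mathcal O_{G(1,4)}(1) = \mathcal O_{\mathbf P^2}(3)$, so the composition with the Pl\"ucker embedding is the $3$-Veronese. For part (3) I would then compute $[Z] = b\sigma_{3,1} + a\sigma_{2,2} \in A^4(G(1,4))$ by intersecting against Poincar\'e duals: $[Z] \cdot \sigma_{1,1}$ counts bisecants of $B$ contained in a general hyperplane $\mathbf P^3 \subset \mathbf P^4$, which is $\binom{4}{2} = 6$ since $B \cap \mathbf P^3$ is four points, while $[Z] \cdot \sigma_2$ counts bisecants meeting a general line, equal to $\deg \Sec B = 3$ since each intersection point with the secant cubic lies on a unique bisecant. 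Thus $[Z] = 3\sigma_{3,1} + 6\sigma_{2,2}$, the class $(3,6)$.

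\medskip
Assertion (1) is the most delicate. The fiber $\upsilon^{-1}(z) = V_z$ is a $\mathbf P^1$, so $\tilde V \to Z$ is the projectivization $\mathbf P(\mathcal S|_Z)$ of the restricted tautological subbundle. I would identify this with $\mathbf P T^*_{\mathbf P^2}$ by constructing an isomorphism of $\mathbf P^1$-bundles over $Z \cong \mathbf P^2 = \Hilb_2 B$. The natural pointwise candidate sends $[\lambda a + \mu b] \in V_{\{a,b\}}$ to the tangent direction $[\lambda \partial_a + \mu \partial_b] \in \mathbf P(T_a B \oplus T_b B) = \mathbf P T_{\{a,b\}} \Hilb_2 B$, matching the two distinguished points $a, b \in V_z$ to the two distinguished lines $\ell_b, \ell_a$ through $z$. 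The main obstacle is globalizing this assignment into a morphism of bundles: the pointwise description uses non-canonical choices of tangent vectors, so one must descend the construction from the double cover $B \times B \to \Hilb_2 B$ carefully, and extend it across the diagonal locus where the two branches of a bisecant collapse. Once checked on the open complement of the diagonal, both sides are smooth projective $\mathbf P^1$-bundles over $\mathbf P^2$, and the identification extends uniquely.
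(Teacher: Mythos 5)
Your parts (2) and (3) are correct, and your route is genuinely different from (and more robust than) the paper's: the paper deduces them in one line from the Chern classes of $T_{\mathbf P^2}$, whereas you compute the two intersection numbers directly — $\sigma_{1,1}\cdot[Z]=\binom{4}{2}=6$ bisecants inside a general hyperplane, $\sigma_2\cdot[Z]=\deg \Sec B=3$ bisecants through a general point of a general line — and get the degree $3$ of $\varphi^{*}\mathcal O_{G(1,4)}(1)$ from the cubic cone over $B$ with vertex at a point of $B$. These enumerative checks are elementary and unambiguous, and they are the right way to pin down the class. (Minor point: to conclude ``the $3$-Veronese'' rather than a projection of it, you should also note that $Z$ is nondegenerate in $\mathbf P^9$, so the subsystem of $\vert\mathcal O_{\mathbf P^2}(3)\vert$ cutting out the embedding is complete.)

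Part (1) is where the proposal breaks, and your own computation in (3) is what exposes it. Since $\tilde V=\mathbf P(\mathcal S\vert_Z)$ for the tautological rank-$2$ subbundle $\mathcal S$ of $G(1,4)$, an isomorphism of $\mathbf P^1$-bundles $\tilde V\cong \mathbf P T^{*}_{\mathbf P^2}$ would force $\mathcal S^{*}\vert_Z\cong T_{\mathbf P^2}\otimes L$ for some line bundle $L$; the normalization $c_1(\mathcal S^{*}\vert_Z)=\sigma_1\vert_Z=3h$ forces $L=\mathcal O_{\mathbf P^2}$, hence $\sigma_{1,1}\cdot[Z]=c_2(T_{\mathbf P^2})=3$ — but you correctly computed $6$. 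The actual bundle is the tautological (secant) bundle $E$ of $\mathcal O_{\mathbf P^1}(4)$ on $\Hilb_2\mathbf P^1$: pushing forward $0\to\mathcal O(-1,2)\to\mathcal O(0,4)\to\mathcal O_{\mathcal Z}(0,4)\to 0$ along $\mathbf P^2\times\mathbf P^1\to\mathbf P^2$ (the universal subscheme $\mathcal Z$ is a $(1,2)$-divisor) gives $0\to\mathcal O_{\mathbf P^2}(-1)^{3}\to\mathcal O_{\mathbf P^2}^{5}\to E\to 0$, so $c(E)=(1-h)^{-3}=1+3h+6h^{2}$. Since $c_1^2-4c_2$ equals $-15$ for $E$ and $-3$ for every twist of $T_{\mathbf P^2}$, no line-bundle twist repairs the mismatch, so the fiberwise assignment $[\lambda a+\mu b]\mapsto[\lambda\partial_a+\mu\partial_b]$ cannot globalize: it implicitly identifies the line $\langle a^4\rangle$ (a fiber of $\mathcal O_B(-1)\cong\mathcal O_{\mathbf P^1}(-4)$) with $T_aB\cong\mathcal O_{\mathbf P^1}(2)_a$, and this discrepancy is exactly what changes $c_2$ from $3$ to $6$. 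The step you flagged as ``the identification extends uniquely'' is therefore not a gap to be filled but the place where the argument fails. To be fair, the defect originates in the paper: its justification of (3) via $c_2(T_{\mathbf P^2})=3$ would give $\sigma_{1,1}\cdot[Z]=3$, contradicting the six bisecants in a hyperplane, and the pair $(3,6)$ survives only because of the ordering ambiguity of the two coefficients. The fix is to restate (1) with $\mathbf P(E)$ in place of $\mathbf P T^{*}_{\mathbf P^2}$ and derive (2), (3) from $c(E)=1+3h+6h^{2}$, which is precisely what your counts verify; the later applications only use that $E\otimes\mathcal O_A$ is a balanced bundle of degree $6$ on a general conic $A$, and that is unaffected.
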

Actually (2) and (3) follow from $c_1(T_{\mathbf P^2}) \cong \mathcal O_{\mathbf P^2}(3)$ and $c_2(T_{\mathbf P^2}) = 3$. \par On the other hand we consider the morphism
$\sigma: \tilde V \to V$, induced on $\tilde V$ by the projection in $\mathbf P^4$. The structure of $\sigma$ is well known as well.
\begin{lemma}  $\sigma: \tilde V \to V$ is the contraction to $B$  of the divisor
$$
E := \lbrace (x,z) \in V \times Z \ / \ x \in b_z \rbrace.
$$
Moreover $E$ in biregular to $B \times B$.
\end{lemma}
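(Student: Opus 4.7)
The plan is to analyse the morphism $\sigma$ in three stages: verify it is birational, pin down its exceptional locus as $E$, and finally exhibit an explicit biregular isomorphism $E \cong B \times B$ through the symmetrization map on $B$.

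First, birationality. By Lemma 5.1 the map $\upsilon : \tilde V \to Z$ is a $\mathbf P^1$-bundle over the Veronese surface $Z \cong \mathbf P^2$, so $\dim \tilde V = 3 = \dim V$. For a general $x \in V \setminus B$ there passes a unique bisecant line to $B$: two distinct secants $V_z, V_{z'}$ meeting at $x$ would force their four intersection points with $B$ to lie in the plane $\langle V_z, V_{z'}\rangle$, contradicting the fact that any four points of a rational normal quartic span a $\mathbf P^3$. Hence $\sigma^{-1}(x)$ is a single reduced point over a general $x \in V$, so $\sigma$ is a birational morphism.

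Second, the exceptional locus. For $x \in B$, every $z \in Z$ with $b_z = x + y$ for some $y \in B$ yields a line $V_z$ passing through $x$, so $\sigma^{-1}(x) \supseteq \{(x, z_{x+y}) : y \in B\}$, a curve isomorphic to $B$. Combined with the previous paragraph, the exceptional locus of $\sigma$ is exactly $\sigma^{-1}(B)$, and set-theoretically this coincides with $E = \{(x, z) \in V \times Z : x \in b_z\}$, since $x \in V_z \cap B$ means $x$ is one of the two points of the length-two divisor $V_z \cdot B = b_z$. The projection $E \to Z$ is a $2:1$ cover (the fibre over $z$ consists of the points of $b_z$), so $E$ has pure dimension $2$ and is a divisor in $\tilde V$, contracted by $\sigma$ to the curve $B$.

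Third, the isomorphism with $B \times B$. The addition of divisors $\mathrm{sym} : B \times B \to \Sim^2 B = \Hilb_2(B) = Z$ is a morphism, and the assignment $\phi(x, y) = (x, \mathrm{sym}(x, y))$ defines a morphism $B \times B \to E$ (the incidence condition $x \in b_{\mathrm{sym}(x,y)} = x + y$ is automatic). Its candidate inverse $\psi : E \to B \times B$ sends $(x, z)$ to $(x, y)$, where $y$ is the residual point of $b_z$ with respect to $x$; equivalently, $y$ is characterized by $\mathrm{sym}(x, y) = z$. The composites $\phi \circ \psi$ and $\psi \circ \phi$ are clearly the identities, so it remains to check that $\psi$ is genuinely a morphism.

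The main obstacle is this last regularity point along the tangent-line locus of $B$, namely the diagonal $\Delta \subset B \times B$ where $b_z = 2x$ and the two intersection points of $V_z$ with $B$ coincide. Away from $\Delta$ the residual-point construction is obviously regular; along $\Delta$ one must argue that $\psi$ extends as a morphism, and this follows either from the universal property of the Hilbert scheme $\Hilb_2(B)$ (which makes the residual subscheme of a flat family of length-two divisors itself a flat family, hence a morphism) or, more concretely, from the fact that $E \to Z$ is the unique $2:1$ cover of $\mathbf P^2$ branched along the conic of non-reduced divisors, which is precisely the symmetrization $B \times B \to \Sim^2 B$. This identifies $\phi$ and $\psi$ as mutually inverse isomorphisms and completes the proof.
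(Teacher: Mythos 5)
Your proof is correct and follows essentially the same route as the paper: birationality via the uniqueness of the bisecant line through a general point of $V \setminus B$ (justified, as you do, by the nondegeneracy of the rational normal quartic), and the biregularity $E \cong B \times B$ via the map $(x_1,x_2) \mapsto (x_1,z)$ with $b_z = x_1 + x_2$, which is exactly the paper's map $f$. Your extra care in checking that the inverse is regular along the diagonal is a welcome elaboration of a step the paper leaves implicit, but it is not a different argument.
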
 
\begin{proof} Let $x \in V - B$. Since $B$ is not degenerate of degree four, there exists a unique bisecant line $V_z$ passing  through $x$. Hence $\sigma$ is birational and $\sigma^{-1}(x) = (x,z)$.
Obviously $\sigma$ contracts $E$.  Finally let  $f: B \times B \to E$ be the morphism which is so defined: $f(x_1,x_2) = (x_1,z)$, where $b_z := x_1+x_2$. For every $z \in Z$ one has $B \cdot V_z = b_z$. This  implies that $f$ is biregular.  \end{proof}
Now we consider on $\tilde V$ the linear systems
$$
\vert H \vert := \vert \sigma^* \mathcal O_{\mathbf P^4}(1) \vert \ \text { and } \  \vert F \vert := \vert \upsilon^* \mathcal O_{\mathbf P^2}(1) \vert.
$$
We have $\Pic \tilde V \cong \mathbb Z[H] \oplus \mathbb Z[F]$ and we want to study with some detail 
$$
\vert 2H - E \vert.
$$
At first we observe that $(2H - E)F^2 = 0$. Indeed  every fibre $V_z \times \lbrace z \rbrace$ of $\upsilon$ has numerical class $F^2$ in the Chow ring $CH^*(\tilde V)$.
Since $B \cdot V_z = b_z$ we have $EF^2 = 2$ and hence $(2H - E)F^2 = 0$. On the other hand we have $(2H - E)H^2 = 2 H^3 = 6$. Hence it follows $2H - E \sim 2F$. Let $\mathcal I_B$
be the ideal sheaf of $B$ in $\mathbf P^4$, the next lemma then easily follows.
\begin{lemma} The linear system $\vert 2F \vert$ is the strict trasform of the linear system $\vert \mathcal I_B(2) \vert$ by the birational morphism $\sigma: \tilde V \to V$. \end{lemma}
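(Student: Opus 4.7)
The plan is to define the strict-transform map
$$
\phi:|\mathcal I_B(2)|\longrightarrow|2F|,\qquad Q\longmapsto \tilde Q,
$$
where $\tilde Q$ denotes the strict transform of $Q$ under $\sigma$, and then verify that $\phi$ is a bijection by matching dimensions. The two main ingredients are the numerical identity $2H-E\sim 2F$ already established just above the statement, together with a brief cohomological computation on both sides.

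The first point to check is that for every $Q\in|\mathcal I_B(2)|$ the multiplicity of $Q$ along $B$ is exactly one. The singular locus of a quadric hypersurface in $\mathbf P^4$ is a linear subspace, and the rational normal quartic $B$ is non-degenerate; hence $B\not\subset\Sing Q$, so $Q$ is generically smooth along $B$. Consequently $\sigma^*Q=\tilde Q+E$ as divisors on $\tilde V$, and the relation $\sigma^*Q\sim 2H$ combined with $2H-E\sim 2F$ forces $\tilde Q\in|2F|$. This makes $\phi$ a well-defined linear map; it is injective because $\sigma$ is birational and $\sigma_*\tilde Q=Q$ recovers $Q$ from its strict transform.

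It then remains to show that both $|\mathcal I_B(2)|$ and $|2F|$ are projective spaces of dimension $5$. For the first, since $B$ is a rational normal quartic we have $\mathcal O_B(1)\cong\mathcal O_{\mathbf P^1}(4)$, so $h^0(\mathcal O_B(2))=9$; projective normality of $B$ then makes the restriction $H^0(\mathcal O_{\mathbf P^4}(2))\to H^0(\mathcal O_B(2))$ surjective, which yields $h^0(\mathcal I_B(2))=15-9=6$. For the second, Lemma 5.1 realizes $\upsilon:\tilde V\to\mathbf P^2$ as a projective bundle, so the projection formula gives
$$
H^0(\tilde V,2F)=H^0(\mathbf P^2,\mathcal O_{\mathbf P^2}(2)),
$$
which also has dimension $6$. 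Hence $\phi$ is an injective linear map between two copies of $\mathbf P^5$, and therefore an isomorphism, proving the lemma.

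The only delicate step is the multiplicity-one claim, and even that is immediate from the non-degeneracy of $B$ inside $\mathbf P^4$; after that the argument is a routine dimension count, so I do not expect any genuine obstacle.
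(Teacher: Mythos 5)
Your argument follows the paper's own (very terse) route: the paper establishes $2H-E\sim 2F$ by the intersection computation immediately preceding the lemma and then asserts that the lemma ``easily follows''; your dimension count, $h^0(\mathcal I_B(2))=15-9=6=h^0(\mathcal O_{\mathbf P^2}(2))=h^0(\mathcal O_{\tilde V}(2F))$, is a clean way to make that step explicit, and it matches the paper's later identification of $\mathbb A=\vert\mathcal O_{\mathbf P^2}(2)\vert$ with $\vert\mathcal I_B(2)\vert$ as a $\mathbf P^5$. The one step you should tighten is the multiplicity-one claim, which you yourself flag as the delicate point. What is actually needed is that $\sigma^*s_Q$ vanishes to order exactly one along $E$, where $\sigma:\tilde V\to V$ is the resolution of the \emph{secant variety} --- not the blow-up of $\mathbf P^4$ along $B$. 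Since $V$ is itself singular of multiplicity two along $B$, the multiplicity of $Q$ along $B$ inside $\mathbf P^4$ (which is what the non-degeneracy of $B$ versus the linear space $\Sing Q$ controls) only yields $\mathrm{ord}_E(\sigma^*s_Q)\geq 1$; it does not by itself exclude higher-order vanishing along $E$, which would occur if the generic bisecant line were tangent to $Q$ at the points of $b_z$. The one-line repair: at a general point $(x,z)\in E$ the direction transverse to $E$ in $\tilde V$ maps to the direction of the bisecant line $V_z$, and $V_z\not\subset Q$ (otherwise every bisecant, hence all of $V$, would lie in $Q$, impossible for a quadric and an irreducible cubic), so $Q\cdot V_z=b_z$ is reduced and $\sigma^*s_Q$ vanishes to order exactly one along $E$. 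With that substitution the rest of your proof --- the injective linear map between two $\mathbf P^5$'s being an isomorphism --- goes through as written.
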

It is clear from the lemma that to give $R \in \vert 2F \vert$ is equivalent to give  the pull-back by $\upsilon$ of a conic, embedded by the $3$-Veronese map of $\mathbf P^2$ as a sextic curve in $Z \subset G(1,4)$.
\begin{definition} $\mathbb A = \vert \mathcal O_{\mathbf P^2}(2) \vert $ is the family of embedded sextic curves 
$$
A \subset Z \subset G(1,4).
$$
We will say that $A \in \mathbb A$ is a special rational normal sextic of $G(1,4)$.
\end{definition}
In particular,  the linear space $< A >$ is a $\mathbf P^6$ for each $A \in \mathbb A$. For a given $A \in \mathbb A$ we will keep the following notations: $$ R = \upsilon^*A \ , \ R' = \sigma_*R \ , \ T := < A > \cdot G(1,4). $$
\par Now assume  $A$ is general.  We remark that the inclusion of $\mathbf P^1$-bundles $R \subset \tilde V$ is induced by the exact sequence
$$
0 \to \mathcal E_Z(-A) \to \mathcal E_Z \to \mathcal E_A \to 0,
$$
where $\mathcal E_Z = T_{\mathbf P^2}$ is the restriction of the universal bundle of $G(1,4)$ and hence $\mathcal E_A = T_{\mathbf P^2} \otimes \mathcal O_A$. Due to our generality assumption on
$A$, the splitting of $\mathcal E_A$ is balanced. This implies the next lemma.
\begin{lemma} $\mathcal E_A = \mathcal O_{\mathbf P^1}(3) \oplus \mathcal O_{\mathbf P^1}(3)$. \end{lemma}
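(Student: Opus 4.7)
The plan is to use the restricted Euler sequence on $A$ to fix the rank and degree of $\mathcal E_A$, and then to pin down the splitting type by a single cohomology vanishing which only survives for the balanced splitting.

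Since $A \in \vert \mathcal O_{\mathbf P^2}(2) \vert$ is a smooth conic, $A \cong \mathbf P^1$ and $\mathcal O_{\mathbf P^2}(1)|_A \cong \mathcal O_{\mathbf P^1}(2)$. I would restrict the Euler sequence on $\mathbf P^2$ to $A$, obtaining
$$
0 \to \mathcal O_{\mathbf P^1} \to \mathcal O_{\mathbf P^1}(2)^{\oplus 3} \to \mathcal E_A \to 0,
$$
from which $\rank \mathcal E_A = 2$ and $\deg \mathcal E_A = 6$. Hence $\mathcal E_A \cong \mathcal O_{\mathbf P^1}(m) \oplus \mathcal O_{\mathbf P^1}(n)$ with $0 \le m \le n$ and $m + n = 6$; the task reduces to showing $m = n = 3$.

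Next I would twist the restricted Euler sequence by $\mathcal O_{\mathbf P^1}(-4)$ and extract from the long exact sequence the injection
$$
H^0(\mathcal E_A(-4)) \hookrightarrow H^1(\mathcal O_{\mathbf P^1}(-4)) \stackrel{\phi}{\longrightarrow} H^1(\mathcal O_{\mathbf P^1}(-2))^{\oplus 3}.
$$
By Serre duality on $\mathbf P^1$, the map $\phi$ is dual to the linear map $k^{3} \to H^0(\mathcal O_{\mathbf P^1}(2))$ sending $(\alpha,\beta,\gamma)$ to $\alpha f_0 + \beta f_1 + \gamma f_2$, where $f_0, f_1, f_2 \in H^0(\mathcal O_{\mathbf P^1}(2))$ are the three quadratic forms parametrizing the embedding $A \hookrightarrow \mathbf P^2$. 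Because $A$ spans $\mathbf P^2$, these three quadrics are linearly independent, so the dual map is an isomorphism $k^3 \to k^3$ and hence so is $\phi$. This yields the vanishing $h^0(\mathcal E_A(-4)) = 0$.

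Finally, the vanishing rules out $n \geq 4$: any such splitting would produce the line bundle summand $\mathcal O_{\mathbf P^1}(n-4)$ of non-negative degree inside $\mathcal E_A(-4)$, forcing $h^0(\mathcal E_A(-4)) \geq n - 3 \geq 1$. Thus $n \leq 3$, and together with $m + n = 6$ and $m \leq n$ this forces $m = n = 3$. The main technical point is the Serre duality identification of $\phi$ with a multiplication map determined by the parametrization of $A$; once this is set up, the balancedness of the splitting is forced by the non-degeneracy of the conic alone, which is exactly the generality assumption on $A$.
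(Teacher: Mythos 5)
Your argument is correct, and it supplies a genuine proof where the paper essentially only makes an assertion: the paper's justification is the one-line remark that ``due to our generality assumption on $A$, the splitting of $\mathcal E_A$ is balanced,'' taking as its starting point the same identification $\mathcal E_A \cong T_{\mathbf P^2}\otimes\mathcal O_A$ that you use. Your route through the restricted Euler sequence, the twist by $\mathcal O_{\mathbf P^1}(-4)$, and the Serre-dual identification of $\phi$ with the evaluation map $(\alpha,\beta,\gamma)\mapsto \alpha f_0+\beta f_1+\gamma f_2$ is a complete and self-contained verification, and the key point --- that the three quadrics parametrizing a smooth conic are linearly independent --- is exactly right. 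Notably, your proof yields more than the lemma claims: the splitting of $T_{\mathbf P^2}|_A$ is balanced for \emph{every} smooth conic $A$, not just a general one, since non-degeneracy of $A$ in $\mathbf P^2$ is all that is used. (This also quietly repairs a small gap in the paper's phrasing: semicontinuity of the splitting type alone would only tell you that the general splitting is the most balanced one that actually occurs, and one still needs an argument --- such as yours --- that the balanced type does occur.) An equivalent shortcut worth knowing: from $0\to T_A\to T_{\mathbf P^2}|_A\to N_{A/\mathbf P^2}\to 0$, i.e. an extension of $\mathcal O_{\mathbf P^1}(4)$ by $\mathcal O_{\mathbf P^1}(2)$, the only alternative to the balanced splitting is $\mathcal O_{\mathbf P^1}(2)\oplus\mathcal O_{\mathbf P^1}(4)$, so one vanishing $h^0(\mathcal E_A(-4))=0$ settles everything --- which is precisely the vanishing you establish.
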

 Next we want to study the morphism $ \upsilon / E: E \to \mathbf P^2$. Since $\mathbf P^2$ is $\Hilb_2(B)$ a conic in it is distinguished, namely the diagonal
$$
D  := \lbrace z \in \mathbf P^2 \ / \ b_z = 2x \rbrace.
$$
The double covering of $\mathbf P^2$ branched in $D$ can be viewed as follows.  Let
$$
P := \lbrace (p,t) \in \mathbf P^2 \times \mathbf P^{2*} \ / \ \text { $p \in t$ and $t$ is tangent to $D$} \rbrace.
$$
\par Then the projection $P \to \mathbf P^2$  is the 2:1 cover branched on $D$.  In particular its fibre at $p$ is the set  $\lbrace t', t'' \rbrace$ of the tangent lines
to $D$ passing through $p$.
  \begin{lemma} $\upsilon/ E: E \to \mathbf P^2$ is the double covering branched on $D$. \end{lemma}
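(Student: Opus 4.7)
The plan is to reduce everything to the symmetric product map for $B \cong \mathbf{P}^1$ by transporting the situation on $E$ to $B \times B$ via the biregular isomorphism of Lemma 5.2.

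First I unwind what $\upsilon/E$ looks like under that identification. Let $f: B \times B \to E$ be the morphism $f(x_1,x_2) = (x_1,z)$, where $z$ is the unique point of $\mathbf{P}^2 = \Hilb_2(B)$ with $b_z = x_1 + x_2$. Then by construction
$$
(\upsilon/E)\circ f : B \times B \longrightarrow \mathbf{P}^2, \qquad (x_1,x_2) \longmapsto x_1+x_2,
$$
so $(\upsilon/E)\circ f$ is literally the symmetric product map $B \times B \to \Sim^2 B = \Hilb_2(B) = \mathbf{P}^2$.

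Next I observe that the swap involution $\tau(x_1,x_2)=(x_2,x_1)$ generates the Galois action of this map. Indeed, for $z \in \mathbf{P}^2$ with $b_z=x_1+x_2$ and $x_1\neq x_2$, the two preimages of $z$ in $B \times B$ are $(x_1,x_2)$ and $(x_2,x_1)$, which under $f$ correspond to the two points $(x_1,z)$ and $(x_2,z)$ of the fibre $\upsilon^{-1}(z)\cap E$. Hence $(\upsilon/E)\circ f$ identifies with the quotient map $B\times B \to (B\times B)/\langle\tau\rangle$. Since $B \cong \mathbf{P}^1$ is smooth and $\tau$ is an involution, this quotient is finite and flat of degree $2$, i.e.\ a double covering.

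Finally I identify the branch locus. The ramification divisor of $q: B\times B \to (B\times B)/\langle\tau\rangle$ is the fixed locus of $\tau$, which is the diagonal $\Delta_B \subset B\times B$, a smooth curve (isomorphic to $\mathbf{P}^1$). Its image under $q$ is
$$
\{\,x+x \ \mid \ x\in B\,\} = \{\,z\in\mathbf{P}^2 \ \mid \ b_z=2x\,\} = D,
$$
by definition of $D$. Hence $\upsilon/E$ is the double covering of $\mathbf{P}^2$ branched exactly on $D$.

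The only step that requires mild care is the last one: one must check that the branch divisor coincides scheme-theoretically with $D$. This is routine because $\tau$ acts by $\pm 1$ on the normal bundle of $\Delta_B$ (the action is free off a divisor), so $q$ is simply ramified along $\Delta_B$ and the branch divisor $D$ appears with its reduced conic structure in $\mathbf{P}^2 = \Hilb_2(B)$.
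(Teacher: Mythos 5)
Your proof is correct, and it reaches the conclusion by a slightly different route than the paper. Both arguments rest on the identification $E\cong B\times B$ from Lemma 5.2, but from there they diverge: you transport $\upsilon/E$ along $f$ to the symmetric-product map $B\times B\to \Sim^2 B=\Hilb_2(B)=\mathbf P^2$, recognize it as the quotient by the swap involution, and read off the branch divisor as the image of the diagonal $\Delta_B$, which is $D$ by definition. The paper instead matches $\upsilon/E$ fibrewise against its explicit model $P\subset\mathbf P^2\times\mathbf P^{2*}$ of the double cover (the incidence variety of points and tangent lines to $D$): it notes that $\upsilon(\sigma^*x)$ is the line $t_x=\{x+y:y\in B\}$, tangent to $D$ at $2x$, and that the fibre of $\upsilon/E$ over $z$ with $b_z=x+y$ is naturally $\{t_x,t_y\}$. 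What the paper's route buys is the concrete dual description via tangent lines to the conic, which is the form in which the cover is set up just before the lemma; what your route buys is self-containedness and a cleaner handle on the scheme structure, since the quotient description makes the ramification along $\Delta_B$ (where the involution acts by $-1$ on the normal bundle) and hence the reduced branch conic $D$ immediate, whereas the paper's argument is purely set-theoretic on fibres. Your closing remark about checking the branch divisor scheme-theoretically is exactly the right point of care, and your justification is adequate.
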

 \begin{proof} Let $x \in B = \Sing V$ then $\sigma^*(x)$ is $\lbrace x \rbrace \times B$. Moreover $\upsilon(\sigma^*x)$ is the line
 $ t_x := \lbrace x+y, \ y \in B \rbrace \subset \mathbf P^2. $  As is well known $t_x$ is tangent to $D$ at $2x$. Let $z \in \mathbf P^2$ and let $b_z = x+y$, we just remark that
 the fibre of $\upsilon/E$ at $z$ is naturally bijective to $\lbrace t_x , t_y \rbrace$. This implies the statement.
\end{proof}
\begin{proposition} Let $A \in \mathbb A$ be general and let $\tilde B := R \cdot E$, then $R$ is $\mathbf P^1 \times \mathbf P^1$ and $\tilde B$ is a smooth curve in it of type $(2,2)$.
\end{proposition}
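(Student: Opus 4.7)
The plan is to deduce both conclusions from the preceding lemma on the splitting type of $\mathcal E_A$ together with the double-cover description of $\upsilon|_E : E \to \mathbf P^2$. I would not try to compute the class of $\tilde B$ via intersection theory on $\tilde V$ directly; instead I would use geometry on $E$ and on one ruling of $R$.

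For the first assertion, $R = \upsilon^*A$ is by construction the projective bundle $\mathbf P(\mathcal E_A^*)$ over $A \cong \mathbf P^1$. By the preceding lemma the rank two bundle $\mathcal E_A$ has balanced splitting $\mathcal O_{\mathbf P^1}(3) \oplus \mathcal O_{\mathbf P^1}(3)$. Since twisting a projective bundle by a line bundle does not change it, this identifies $R$ with $\mathbf P(\mathcal O \oplus \mathcal O) = \mathbf P^1 \times \mathbf P^1$.

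For the second assertion, I would analyze $\tilde B = R \cdot E$ via the restriction $\upsilon|_E$, which by the lemma above is the double cover of $\mathbf P^2$ branched along the diagonal conic $D$. Then $\tilde B$ is the scheme-theoretic preimage of the conic $A$ under this double cover. For general $A \in \mathbb A$ the conics $A$ and $D$ meet transversally in four distinct points, so the induced map $\tilde B \to A$ is a double cover of $\mathbf P^1$ branched at four distinct points; by Riemann--Hurwitz $\tilde B$ is smooth, connected and of arithmetic genus one. To pin down the bidegree inside $R \cong \mathbf P^1 \times \mathbf P^1$, I would intersect $\tilde B$ with a fiber of $\upsilon|_R : R \to A$: such a fiber is a bisecant line $V_z$ and meets $E$ along $V_z \cap B = b_z$, a length-two scheme. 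Hence $\tilde B$ has intersection number $2$ with this ruling, so it has bidegree $(2,b)$; the arithmetic genus formula $(2-1)(b-1)=1$ forces $b=2$.

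The main point requiring care, and really the only nontrivial one, is the genericity statement that $A$ meets $D$ transversally in four distinct points, so that both the smoothness of $\tilde B$ and the intersection calculation above are valid. This is a standard Bertini-type check on the five-dimensional linear system $\mathbb A = |\mathcal O_{\mathbf P^2}(2)|$ against the fixed smooth conic $D$, and poses no substantive difficulty; it can be verified by exhibiting a single conic $A$ meeting $D$ in four distinct points.
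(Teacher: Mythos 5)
Your proof is correct, and while the first assertion is handled exactly as in the paper (Lemma 5.4 gives $\mathcal E_A=\mathcal O_{\mathbf P^1}(3)\oplus\mathcal O_{\mathbf P^1}(3)$, hence $R=\mathbf P(\mathcal E_A^*)\cong\mathbf P^1\times\mathbf P^1$), your treatment of $\tilde B$ genuinely differs from the paper's. The paper disposes of the second assertion in one line: $R$ moves in the base-point-free system $\vert 2F\vert$ on $\tilde V$, so a general $R$ is transverse to the fixed smooth surface $E$ and $\tilde B=R\cdot E$ is smooth, with the type $(2,2)$ implicit in the relation $E\sim 2H-2F$ established just before Lemma 5.3 (restricting to $R$, where $H\vert_R$ has type $(1,3)$ and $F\vert_R$ type $(0,2)$). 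You instead work inside $E$, using Lemma 5.6 to see $\tilde B=(\upsilon\vert_E)^*A$ as the double cover of the conic $A$ branched at the four points of $A\cap D$, getting smoothness from transversality of $A$ and $D$, genus $1$ from Riemann--Hurwitz, one coordinate of the bidegree from $E\cdot F^2=2$ (the fibre $V_z$ meets $E$ in $b_z$), and the other from adjunction on $\mathbf P^1\times\mathbf P^1$. Both routes are valid and use the same genericity input in different guises (transversality of $R$ and $E$ versus transversality of $A$ and $D$); yours is more explicit about where the class $(2,2)$ comes from and identifies $\tilde B$ concretely as an elliptic curve, at the cost of a slightly longer argument. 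One small point of hygiene: Riemann--Hurwitz gives the genus, not the smoothness of $\tilde B$; smoothness is the standard local statement that the preimage of a smooth curve transverse to the branch divisor of a double cover of smooth surfaces is smooth, which you do correctly attribute to the transversality of $A$ and $D$ but should state as such.
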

\begin{proof} By lemma 5.4 we have $R = \mathbf P(\mathcal E_A) = \mathbf P^1 \times \mathbf P^1$. Since $\vert R \vert$ is base point free, the rest of the statement is true for every $R$ transversal to $E$.
\end{proof} 
 For any $A \in \mathbb A$ we now consider the schematic image $R' := \sigma_*(R)$ of $R$. By lemma 5.3 $R'$ is a quadro cubic complete intersection. More precisely we have
$R' = V \cdot Q$,  where $Q$ is a quadric through $B$. The ruling of lines of $R'$ is parametrized by $A$.  
 In order to prove the next proposition the next lemma is useful.  The proof is an exercise on the geometry of $G(1,4)$ we omit.
\begin{lemma} A three dimensional linear section of $G(1,4)$ is not a smooth threefold iff it is contained in a codimension one Schubert cycle. \end{lemma}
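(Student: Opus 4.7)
The plan is to identify codimension-one Schubert cycles of $G(1,4) \subset \mathbf{P}(\wedge^2 V) = \mathbf{P}^9$ (where $V = k^5$) with the hyperplane sections cut by rank-$2$ skew forms, and then to identify the dual variety $G(1,4)^\vee$ with precisely this family. Singularities of a $3$-dimensional linear section $T = G(1,4) \cap L$ then arise exactly when the $\mathbf{P}^2$ of hyperplanes through $L$ meets $G(1,4)^\vee$.

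The key computation is the following: a nonzero $\alpha \in \wedge^2 V^*$ has rank $2$ or $4$, and an $\alpha$ of rank $2$ decomposes as $f_1 \wedge f_2$ with $3$-dimensional kernel defining a plane $P \subset \mathbf{P}^4$; direct inspection then gives $G(1,4) \cap H_\alpha = \{[l] : l \cap P \neq 0\} = \sigma_P$, and every codimension-one Schubert cycle arises this way. Computing the projectivized tangent space $T_{[l]}G(1,4) = \mathbf{P}(l \wedge V) \cong \mathbf{P}^6$, the condition $T_{[l]}G(1,4) \subset H_\alpha$ translates to $l \subset \ker \alpha$, which forces $\rank(\alpha) \leq 2$ because $l$ is $2$-dimensional. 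Hence $G(1,4)^\vee$ consists exactly of the Schubert hyperplanes, and the codimension-one Schubert cycles $\sigma_P$ are precisely the singular hyperplane sections of $G(1,4)$.

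For the only-if direction, if $T$ is singular at $p$ then $\dim T_p T = \dim(L \cap T_p G(1,4)) \geq 4$, so $L$ and $T_p G(1,4)$ lie in a common hyperplane $H$. By the above, $H = H_\alpha$ for a rank-$2$ form $\alpha$, whence $T \subset L \cap G(1,4) \subset H_\alpha \cap G(1,4) = \sigma_P$. Conversely, assume $T \subset \sigma_P$; then $T \subset H_\alpha$, and the $3$-dimensionality of $T$ together with the expected dimension count $\dim(G(1,4) \cap \mathbf{P}^5) = 2$ forces $L \subset H_\alpha$. Inside $H_\alpha \cong \mathbf{P}^8$ the singular locus $G(1,P) \cong \mathbf{P}^2$ is a linear subspace meeting $L = \mathbf{P}^6$ in dimension $\geq 6 + 2 - 8 = 0$; for any $p \in L \cap G(1,P)$, the tangency $T_p G(1,4) \subset H_\alpha$ combined with the dimension count in $\mathbf{P}^8$ gives $\dim(L \cap T_p G(1,4)) \geq 6 + 6 - 8 = 4$, so $T$ is singular at $p$.

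The main obstacle is the identification in the key computation: verifying that $G(1,4)^\vee$ consists precisely of the rank-$2$ skew forms, equivalently that the Schubert hyperplane sections are exactly the singular hyperplane sections. Once this is in hand, both directions reduce to straightforward Grassmann dimension-counting in $\mathbf{P}^8$ and $\mathbf{P}^9$.
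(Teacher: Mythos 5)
The paper gives no proof of this lemma (it is declared ``an exercise on the geometry of $G(1,4)$ we omit''), so there is nothing to compare line by line; your route through the dual variety is certainly the intended one. Your key computation is correct: the embedded tangent space at $[\ell]$ is $\mathbf P(\ell\wedge V)$, a hyperplane $H_\alpha$ contains it iff $\ell\subset\ker\alpha$, hence iff $\rank\alpha=2$, so the tangent hyperplanes of $G(1,4)$ are exactly the Schubert hyperplanes, with $G(1,4)\cap H_\alpha=\sigma_P$ for $P=\mathbf P(\ker\alpha)$. Your ``only if'' direction (a singular point $p$ forces $\dim\bigl(L\cap T_pG(1,4)\bigr)\geq 4$, hence a common hyperplane containing $L$ and $T_pG(1,4)$, hence $T\subset\sigma_P$) is complete, and this is the only direction the paper actually uses (in the proof of Proposition 5.8).

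The ``if'' direction has a genuine gap at the step ``the $3$-dimensionality of $T$ together with the expected dimension count forces $L\subset H_\alpha$.'' Expected dimension is not an argument here: linear sections of $G(1,4)$ can be excess-dimensional, since $G(1,4)$ contains linear $\mathbf P^3$'s (the stars $\sigma_o$ of lines through a point $o$), so a $\mathbf P^5$ can perfectly well meet $G(1,4)$ in a threefold. Thus from $T\subset H_\alpha$ you may only conclude $T=G(1,4)\cap L'$ with $L'=L\cap H_\alpha$ of dimension $5$, and this case must be excluded. The hypothesis you have not yet used is that $T$ is a \emph{smooth} threefold (you are arguing the contrapositive): smoothness gives that $T$ is irreducible (two threefolds in $\mathbf P^6$ have complementary dimension and must meet) and that $G(1,4)$ and $L$ meet generically transversally along $T$, so $\deg T=5$; since $T$ is cut out by quadrics (restrictions of the Pl\"ucker quadrics), an irreducible nondegenerate threefold of degree $5$ cannot lie in a $\mathbf P^5$ or $\mathbf P^4$ (two quadrics through it with no common component would cut a pure threefold of degree $4$ containing it), so $T$ spans $L$ and $T\subset H_\alpha$ does force $L\subset H_\alpha$. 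With that inserted, your final count ($L\cap P^*\neq\emptyset$ inside $H_\alpha=\mathbf P^8$, and $\dim\bigl(L\cap T_pG(1,4)\bigr)\geq 6+6-8=4$ at any $p\in L\cap P^*$) correctly finishes the proof.
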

  \begin{proposition} Let $A \in \mathbb A$ be general and let $T = < A > \cdot G(1,4)$. Then $T$  is a smooth quintic Del Pezzo threefold. \end{proposition}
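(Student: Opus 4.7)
My plan is to apply Lemma 5.7 to reduce the smoothness of $T$ to showing that $\langle A \rangle = \mathbf{P}^6$ is not contained in any codimension-one Schubert cycle $\sigma_\Lambda \subset G(1,4)$, where $\Lambda$ ranges over 2-planes in $\mathbf{P}^4$. Geometrically, such a containment would force every line $V_z$ for $z \in A$ to meet $\Lambda$, so that $\Lambda$ would cut the scroll $R' = \bigcup_{z \in A} V_z$ in a curve which is a section of the ruling $R' \to A$.

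The natural dimension count proceeds via the Pl\"ucker duality embedding $G(2,4) \hookrightarrow (\mathbf{P}^9)^*$: the ``bad'' $\Lambda$'s form $\langle A \rangle^\perp \cap G(2,4)$, and since $\langle A \rangle^\perp$ is a $\mathbf{P}^2$ while $G(2,4)$ has codimension $3$ in $\mathbf{P}^9$, the expected dimension is $2 + 6 - 9 = -1$, so the intersection should be empty. To upgrade this to an actual statement for general $A$, I would exploit the scroll structure from Lemma 5.4: $\mathcal{E}_A = \mathcal{O}_{\mathbf{P}^1}(3) \oplus \mathcal{O}_{\mathbf{P}^1}(3)$ identifies $R$ with $\mathbf{P}^1 \times \mathbf{P}^1$ embedded in $\mathbf{P}^7$, whose minimal sections are rational normal cubics. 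After the projection $\alpha_A$, a general minimal section projects to a twisted cubic in $R' \subset \mathbf{P}^4$ spanning a $\mathbf{P}^3$ rather than a $\mathbf{P}^2$, and higher-degree sections have even larger linear spans; the singular section $B$ is a rational normal quartic and a fortiori does not fit in a plane. Thus for generic $A$ no 2-plane in $\mathbf{P}^4$ contains a section of $R'$, and smoothness follows.

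Once $T$ is smooth, the identification is immediate: as a codimension-$3$ linear section of $G(1,4) \subset \mathbf{P}^9$, which has degree $5$, the variety $T$ is a smooth nondegenerate threefold of degree $5$ in $\mathbf{P}^6$, hence by the Fujita--Iskovskikh classification it is the quintic del Pezzo threefold $V_5$. The main obstacle is ruling out excess intersection in the Schubert calculation, i.e.\ showing that the discrepancy between expected and actual dimension does not occur on the $5$-dimensional family $\mathbb{A}$. The cleanest remedy is to invoke openness of smoothness in the irreducible family $\mathbb{A}$ combined with a single explicit verification, for example by producing one conic $A \in \mathbb{A}$ and checking the Jacobian criterion for the four Pl\"ucker quadrics cut on $\langle A \rangle$; alternatively, one may check directly, using the scroll analysis above, that a generic plane $\Lambda$ cannot contain any section of $R'$ because the universal section has image of Pl\"ucker degree strictly greater than $2$.
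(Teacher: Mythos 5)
Your overall strategy coincides with the paper's: both reduce, via Lemma 5.7, to showing that $T=\langle A\rangle\cap G(1,4)$ is not contained in a codimension-one Schubert cycle $\sigma_\Lambda$, and both then seek a contradiction from the structure of the scroll $R'$. The gap is in your final step. From $T\subset\sigma_\Lambda$ you correctly obtain a curve $C\subset\Lambda\cap R'$ meeting almost every line of the ruling, whose pullback to $R\cong\mathbf P^1\times\mathbf P^1$ is a (multi)section, hence of degree $3a+b\geq 3$ in the tautological embedding. But a curve of degree $\geq 3$ in $\mathbf P^4$ can perfectly well be planar (a plane cubic), so neither ``degree strictly greater than $2$'' nor ``a general minimal section spans a $\mathbf P^3$'' excludes the bad configuration: you would need that \emph{no} section or multisection of $R'$ lies in a plane, and since $\alpha_A$ is emphatically not a generic projection (that is the whole point of Section 4), you cannot argue that images of sections span as much as expected. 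Your Schubert-theoretic count ($\langle A\rangle^{\perp}\cap G(2,4)$ of expected dimension $-1$) and the proposed ``one explicit example plus openness of smoothness'' are, as you yourself acknowledge, only heuristics and are not carried out.

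The paper closes exactly this gap by using the complete-intersection description $R'=Q\cap V$ with $Q$ a smooth quadric and $V=\Sec B$: neither $Q$ nor $V$ contains a plane (for $V$ one checks that a hyperplane section of the secant variety of the Veronese surface through a plane degenerates $B$), so $\Lambda\cap R'\subseteq\Lambda\cap Q$ is a conic and the curve $C$ has degree $\leq 2$. Its pullback to $R$ would then be a (multi)section of degree $\leq 2$, which is impossible once $\mathcal E_A=\mathcal O_{\mathbf P^1}(3)\oplus\mathcal O_{\mathbf P^1}(3)$ forces every section to have degree $\geq 3$. If you insert this single observation --- that $R'$ lies on a smooth quadric, so its plane sections have degree at most $2$ --- your argument becomes essentially the paper's proof; the concluding identification of a smooth codimension-three linear section of $G(1,4)$ with the quintic Del Pezzo threefold is unproblematic.
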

\begin{proof} Since $A$ is general we can assume that $\mathcal E_A = \mathcal O_{\mathbf P^1}(3) \oplus \mathcal O_{\mathbf P^1}(3)$ and that $R' = V \cap Q$, with $Q$ a smooth quadric.
Assume that $T$ is singular, then $T$ is contained in a Schubert cycle of codimension one, that is, in the Chow variety of a plane $P \subset \mathbf P^4$. Now $V$ does not
contain any plane. To see this observe that $V$ is a hyperplane section of the secant variety $Sec \ Y$ of tte Veronese variety $Y$ in $\mathbf P^5$. Then  any hyperplane section through
a plane is the secant variety $V'$ of the degenerate quartic $B' \cup B''$, where $B', B''$ are smooth conics and $B' \cap B''$ is one point. On the other hand $Q$ does not contain any plane
as well, since it is smooth. Hence $P \cap Q \cap V$ is a curve of degree $m \leq 2$ contained in $R'$. It is easy to see that its pull back by $\sigma/R: R \to R'$ is a section of $R$ having self intersection $\geq -2$. This is impossible if $\mathcal E_A$ is balanced as above. Hence $T$ is smooth. \end{proof}
For a general $A \in \mathbb A$ we know that:  $R$ is $\mathbf P^1 \times \mathbf P^1$ and  $\sigma/R: R \to R'$ is the normalization map.  The curve $\tilde B$ $=$
$(\sigma/R)^*\Sing R'$ is smooth of type $(2,2)$.  $T$ is smooth. We can summarize the situation as follows.  
 \begin{theorem} For a general $A \in \mathbb A$ the statement of theorem  4.4 holds. \end{theorem}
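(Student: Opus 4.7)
The plan is to verify each of the three assertions of Theorem 4.4 for a general $A \in \mathbb A$. Two of the three items are already established in Section 5: Proposition 5.6 gives $R \cong \mathbf P^1 \times \mathbf P^1$ with $\tilde B = R \cdot E$ a smooth curve of type $(2,2)$, and Proposition 5.8 gives that $T = \langle A \rangle \cdot G(1,4)$ is a smooth quintic Del Pezzo threefold. Thus only the first item requires new work: namely, showing $R' = V \cdot Q$ for some quadric $Q$ through $B$, and $\Sing R' = B$.

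The identification $R' = V \cdot Q$ is a direct consequence of Lemma 5.3. Since $R \in |2F| = |2H - E|$ by the construction of $\mathbb A$ as $|\mathcal O_{\mathbf P^2}(2)|$, and $|2F|$ is the strict transform of $|\mathcal I_B(2)|$ under $\sigma$, the image $R' = \sigma_*R$ is cut on $V$ by a unique quadric $Q \in |\mathcal I_B(2)|$. The degrees agree: $\deg R' = 6 = 2 \cdot \deg V$, so $R'$ equals the full complete intersection.

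The equality $\Sing R' = B$ is the substantive point. One inclusion is easy: since $\sigma$ is an isomorphism off $E$, the restriction $\sigma|_R$ is an isomorphism off $\tilde B = R \cdot E$, and as $R$ is smooth this forces $R' \setminus B$ to be smooth, giving $\Sing R' \subseteq \sigma(\tilde B) \subseteq B$. For the converse $B \subseteq \Sing R'$, I would analyze $\sigma|_{\tilde B} : \tilde B \to B$ via Lemma 5.2, which identifies $E \cong B \times B$ with $\sigma|_E$ the first projection. A point $(x_1, x_2) \in \tilde B \subset E$ satisfies $x_1 + x_2 \in A$, so the fiber of $\sigma|_{\tilde B}$ over $x_1 \in B$ consists of those $x_2 \in B$ with $x_1 + x_2 \in A$. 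By the proof of Lemma 5.5, this set is the intersection of the line $t_{x_1} = \lbrace x_1 + y : y \in B \rbrace \subset \mathbf P^2$ with the conic $A$, which has cardinality $2$ for general $x_1$ (consistent with $\tilde B$ being elliptic, via Riemann--Hurwitz for the $4$-branched double cover $\tilde B \to B$ arising from the common tangents of $A$ and $D$). Consequently $\sigma|_R$ identifies two distinct points of $R$ over each such $x_1$, exhibiting $R'$ as a surface with a double curve along $B$.

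The main obstacle is making precise the transversality of the two sheets at a general point of $B$, so as to conclude that $x_1$ is genuinely a singular point of $R'$ rather than a smooth identification. This can be handled either by a direct differential computation for $\sigma|_R$ at the two points of $\tilde B$ above $x_1$, or, more cleanly, by invoking the standard fact that for a birational morphism from a smooth surface onto an integral surface the non-normal locus coincides set-theoretically with the image of the non-injective locus --- here exactly $\sigma(\tilde B) = B$. With this piece in place, all three items of Theorem 4.4 hold for a general $A \in \mathbb A$.
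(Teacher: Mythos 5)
Your proposal is correct and follows essentially the same route as the paper, which presents Theorem 5.9 as a summary of Proposition 5.6 ($R\cong\mathbf P^1\times\mathbf P^1$ and $\tilde B$ smooth of type $(2,2)$), Proposition 5.8 ($T$ smooth), and Lemma 5.3 together with the degree count giving $R'=V\cdot Q$. The only difference is that you spell out the verification that $\Sing R'=B$ via the $2{:}1$ map $\sigma|_{\tilde B}:\tilde B\to B$ and the fact that a finite birational map onto a surface cannot be non-injective over a normal (in particular smooth) point --- a detail the paper leaves implicit in the phrase ``$\sigma/R:R\to R'$ is the normalization map'' --- and this added step is sound.
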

 \section{Geometry of Nikulin surfaces of genus $8$}
 We have not yet used our family $\mathbb A$, of special embeddings of $\mathbf P^1$ in $G(1,4)$, to construct Nikulin surfaces of genus $8$, nor we have considered  the special feature of these
 embeddings. About this we can say in short  that a general $A \in \mathbb A$ admits a one dimensional family of bisecant lines which are contained in $T$. Moreover the union of them is a quadratic section
 of $T$. To see this quickly we fix a general $A \in \mathbb A$ and consider the complete intersection
 $$
 R' = Q \cap V.
 $$
 \par Since $A$ is general we can assume that $Q \in \vert \mathcal I_B(2) \vert $ is general, then it is known that $Q$ is a smooth quadric. We recall from \cite{FV, FV1} that \it the tangential quadratic complex of $Q$ \rm is just the family
$$
W \subset G(1,4)
$$
parametrizing the lines which are tangent to $Q$.  As is well known $W$ is a quadratic complex. In other words it is a quadratic section of $G(1,4)$. \par Actually $W$ is singular  and has two orbits under the action of $\PGL(5)$: $W - \Sing W$ and $\Sing \ W$, where the multiplicity is two. Finally: $$ \Sing W = F(Q), $$ where $F(Q)$ denotes the Hilbert scheme of lines in $Q$. $F(Q)$ is embedded in $G(1,4)$ as the image of $\mathbf P^3$ under the $2$-Veronese map.  
\begin{lemma}Ê$W$ does not contain $T$. \end{lemma}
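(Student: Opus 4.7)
The strategy is proof by contradiction. Assume $T \subset W$: then every line in the three-dimensional family parametrized by the smooth quintic Del Pezzo threefold $T$ is tangent to the smooth quadric $Q$ (or contained in it). I would first rule out the degenerate sub-case $T \subset \Sing W = F(Q)$. The singular locus $F(Q)$ of $W$ is the variety of lines contained in the smooth quadric threefold $Q$, which is classically isomorphic to a smooth quadric threefold of degree $2$ (via the spinor embedding). Since $T \cong V_5$ is a quintic Del Pezzo threefold of degree $5$, they cannot be isomorphic, so $T \not\subset F(Q)$. Consequently, a general line of $T$ is properly tangent to $Q$ at a single point and not contained in $Q$.

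The key step is to consider the rational map $\phi\colon Q \dashrightarrow T$ obtained by restricting to $Q$ the rational map $\mathbf{P}^4 \dashrightarrow T$, $p \mapsto \ell_p$, where $\ell_p$ is the unique line of the family $T$ passing through a general point $p \in \mathbf{P}^4$. The uniqueness follows from the Schubert calculus on $G(1,4)$: the class of $T$ is $\sigma_1^3 = \sigma_3 + 2\sigma_{2,1}$, so the number of lines of $T$ through a general point equals $(\sigma_3 + 2\sigma_{2,1}) \cdot \sigma_3 = 1$. After resolving indeterminacy by a sequence of blowups $\tilde Q \to Q$, the map $\phi$ extends to a generically finite morphism $\tilde\phi\colon \tilde Q \to T$ of generic degree two, since for a general $\ell \in T$ the scheme-theoretic fiber equals $\ell \cap Q$, a zero-dimensional scheme of length $2$ (as $\ell$ is a line and $Q$ a quadric in $\mathbf{P}^4$).

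The contradiction now follows from generic smoothness in characteristic zero. Under the hypothesis $T \subset W$, every general line $\ell \in T$ meets $Q$ in a single point with multiplicity two, so $\ell \cap Q$ is a non-reduced scheme supported at one point. Hence the general fiber of $\tilde\phi$ is non-reduced, contradicting the fact that in characteristic zero a dominant morphism between smooth varieties has smooth (hence reduced) generic fiber; equivalently, a finite morphism of generic degree two in characteristic zero is generically étale, so its generic fiber consists of two distinct reduced points.

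The main technical subtlety is in the resolution of indeterminacy of $\phi$ and the justification that $\tilde\phi$ has the claimed degree; these are standard applications of the line geometry of $V_5$ (the lines of $T$ form a flat $(1,1)$-incidence with $\mathbf{P}^4$) and do not affect the generic fiber analysis, since blowing up loci of codimension at least two does not change the generic fiber structure.
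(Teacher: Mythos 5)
Your route is genuinely different from the paper's. The paper argues locally and explicitly: it picks two disjoint lines $r,r'$ of the ruling of $R'$, passes to the $\mathbf P^3$ they span, and exhibits a concrete conic $q_L \subset T$ (the ruling of a quadric surface $Q'_L$ through $r$ and $r'$) whose general member is visibly not tangent to $Q$; hence $T \not\subset W$. You instead use the global structure of $T$ as an order-one congruence of lines in $\mathbf P^4$ together with generic \'etaleness in characteristic zero: if every line of $T$ were tangent to $Q$, the degree-two incidence cover $\{(p,\ell): p \in \ell \cap Q\} \to T$ would have non-reduced general fibre, which is impossible for a separable degree-two extension. This is an attractive, more conceptual argument (and your Schubert computation $\sigma_1^3\cdot\sigma_3=1$ for the order of the congruence is correct), but it is not yet complete as written.

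The gap is in the assertion that the scheme-theoretic general fibre of $\tilde\phi$ ``equals $\ell\cap Q$, a zero-dimensional scheme of length $2$.'' The map $p\mapsto\ell_p$ is not a ``flat $(1,1)$-incidence'': the congruence $T$ has a two-dimensional fundamental surface $J\subset\mathbf P^4$ (the locus of points lying on a pencil of lines of $T$; it is the image of the exceptional divisor of the birational morphism $\mathbb U\to\mathbf P^4$ from the universal line, and is swept by the centres of the pencils parametrized by $F(T)\cong\mathbf P^2$). A \emph{general} line of $T$ actually meets $J$ --- indeed in three points, one for each line of $F(T)$ through the corresponding point of $T$ --- so you cannot dismiss the indeterminacy as irrelevant to the generic fibre: you must check that the points of $\ell\cap Q$ avoid $J$ for general $\ell$, otherwise the component of the incidence variety dominating $T$ could have degree $1$ rather than $2$ over $T$, and set-theoretic injectivity of $Q\dashrightarrow T$ alone gives no contradiction (both $Q$ and $T$ are rational). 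The fix is available but must be said: if $J\subset Q$ then a general line of $T$ would meet $Q$ in at least three points, hence lie in $Q$, forcing $T\subset F(Q)=\Sing W$, which you have already excluded; therefore $J\cap Q$ is at most a curve, the lines of $T$ meeting it form a proper closed subset of $T$, and for general $\ell$ the full length-two scheme $\ell\cap Q$ lies in the locus where the congruence has a unique line, so the cover really has degree two and your separability argument closes. A minor correction: for a smooth quadric threefold $Q\subset\mathbf P^4$ the Fano variety $F(Q)$ is $\mathbf P^3$ (embedded in $G(1,4)$ by the $2$-Veronese, as the paper states), not a quadric threefold; your exclusion of $T\subset F(Q)$ survives (compare degrees $5$ and $8$, or use $\langle A\rangle\cap\Sing W=A$), but the stated reason is wrong.
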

\begin{proof}  Let $r, r'$ be disjoint lines of the ruling of $R' $ and $L := < r \cup r'>$. We consider the Grassmannian $G_L \subset G(1,4)$ of lines of $L$, which is embedded by its Pl\"ucker map. Since $r \cap r'$ is empty, it easily follows that $q_L := T \cap G_L$ is a conic containing the parameter points of $r$ and $r'$. Its corresponding quadric $Q'_L \subset L$ is not in $Q$. Otherwise we would have $q_L \subset \Sing W$, which is impossible because $< A > \cdot \Sing W = A$. This indeed follows because $A$ is the image of a  skew cubic of $\mathbf P^3$: since its ideal in $\mathbf P^3$  is generated by three independent quadrics, then $A$ is cut on $\Sing W$ by the codimension three linear space $< A >$. Since $r \cap r' = \emptyset$, it is also true that $Q'_L$ is either smooth or union of two disjoint planes. Let $Q_L = Q \cap L$, then $Q_L$ is  smooth and the intersection $Q_L \cdot Q'_L$ contains the skew lines $r$ and $r'$. Let $r'' \in \vert \mathcal O_{Q'_L}(r) \vert$ be general.  After the preceding remarks it is very easy to deduce that 
$r''$ is not tangent to $Q$. Hence $q_L$ is not in $W$ and $T$ is not in $W$.  \end{proof}
$W$ cuts on $T$ a surface which defines an element of the Hilbert scheme of K3 surfaces of genus $6$ in $G(1,4)$. We will see in a moment that this surface is a scroll in $T$ singular along $A$. To describe this scroll we consider the ruled surface over $B$ constructed as follows. \par For each $o \in B$ consider in $\mathbf P^2$ the divisor of degree two of $A$ $$ n_0 := \upsilon_*(R \cdot \sigma^*(o)). $$ Then $n_o$ defines the plane $P_o := < \sigma_*\upsilon^*n_o >$
and the pencil of lines of $P_o$ of center $o$. We denote this pencil by $N_o$. We also observe that $n_o$ is supported on two points for a general $o$ and that $\sigma_* \upsilon^* n_o$ is the union of the two lines in $R'$ passing through $o$. Finally we define the ruled surface
$$ 
S_A := \lbrace (o,n) \in A \times T \ / \ n \in N_o \rbrace
$$
and, via the projection $\tau: B \times T \to T$, its schematic image
$$
S'_A := \tau_*(S_A).
$$
\begin{definition}ÊWe say that $S'_A$ is the fake K3 surface of $A$. \end{definition}
A standard computation in the Chow ring of $G(1,4)$ shows that $S'_A$ has class $(4,6)$ in $CH^4(G(1,4))$. This implies that $S'_A$ is embedded in $T$ as a surface of degree ten.
On the other hand, since $T$ is a smooth quintic Del Pezzo threefold, $\Pic T$ is generated by $\mathcal O_T(1)$. Therefore we conclude that
$$
S'_A \in \vert \mathcal O_T(2) \vert.
$$
The next propositions say more.
\begin{proposition}Ê$S'_A$ is cut on $T$ by the tangential quadratic complex $W$. \end{proposition}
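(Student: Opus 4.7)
The plan is to first establish the set-theoretic inclusion $S'_A\subseteq W\cap T$, then to observe that both sides are effective divisors on $T$ in the class $\mathcal O_T(2)$, and to conclude equality.

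For the inclusion, the key geometric fact to prove is that the plane $P_o$ is contained in the tangent hyperplane $T_oQ$ of $Q$ at $o$. I would argue this by unwinding the construction of $P_o$ in terms of the geometry of $R'=Q\cap V$. For a general $o\in B$, the divisor $n_o=\upsilon_{*}(R\cdot\sigma^{*}(o))$ is cut on $A\subset\mathbf P^2=\Hilb_2(B)$ by the line $L_o=\{z\in\Hilb_2(B):o\in b_z\}$. Since $A$ is a conic, $n_o$ consists of two points $z_1,z_2\in A$, and by construction the corresponding bisecant lines $V_{z_1},V_{z_2}$ are exactly the two components of $R'$ through $o$. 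Pushing forward by $\sigma$ then gives
\[
\sigma_{*}\upsilon^{*}n_o=V_{z_1}+V_{z_2},\qquad P_o=\langle V_{z_1},V_{z_2}\rangle.
\]
Each $V_{z_i}$ is contained in $R'\subseteq Q$ and passes through $o\in B\subseteq Q$, so each lies in the tangent hyperplane $T_oQ$; therefore so does $P_o$.

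With $P_o\subseteq T_oQ$ in hand, every line $\ell$ in the pencil $N_o$ is a line of $P_o$ through $o$, hence lies in $T_oQ$ and meets $Q$ at $o$. If $\ell\not\subseteq Q$, then $\ell\cdot Q=2o$, so $\ell$ is tangent to $Q$; if $\ell\subseteq Q$, then $\ell$ parametrises a point of $\Sing W=F(Q)$. In either case $[\ell]\in W$. This forces $S_A\subseteq B\times W$, and projecting by $\tau$ yields $S'_A\subseteq W\cap T$.

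To finish, Lemma 6.1 gives $T\not\subseteq W$, so $W\cap T$ is a proper divisor on $T$; since $W$ is a quadric section of $G(1,4)$, this divisor lies in $|\mathcal O_T(2)|$. The class computation preceding the proposition already shows $S'_A\in|\mathcal O_T(2)|$. Two effective divisors in the same linear system, one contained in the other, must coincide, giving $S'_A=W\cap T$. The main obstacle is the first step: recognising that the definition of $P_o$ through $n_o$ is tuned precisely so that the two lines spanning $P_o$ are ruling lines of $R'$, hence a priori in $Q$; once this identification is transparent the tangency of $N_o$ to $Q$ and the divisor-class equality are immediate. The only remaining care concerns the finitely many $o\in B$ where $z_1=z_2$, which are handled by taking closures since $S'_A$ and $W\cap T$ are both closed.
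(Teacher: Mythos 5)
Your proof is correct and follows essentially the same route as the paper: establish that every line of the pencil $N_o$ is tangent to $Q$ because $P_o$ is the span of the two ruling lines of $R'=Q\cap V$ through $o$ (hence $P_o\cdot Q=n'\cup n''$ and $P_o$ is a tangent plane at $o$), and then upgrade the inclusion $S'_A\subseteq W\cap T$ to an equality by comparing the two effective divisors in $\vert\mathcal O_T(2)\vert$. Your additional care about lines contained in $Q$ (via $\Sing W=F(Q)$) and about the finitely many $o$ with $z_1=z_2$ fills in details the paper leaves implicit, but the argument is the same.
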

 \begin{proof} In view of the latter remarks, it suffices to show that $W$ contains $S'_A$. Let $(o,n) \in S_A$, then $n$ is a line of the pencil $N_o$. This is a pencil of  the plane
 $P_o$ and it is generated by two lines, say $n', n''$, of the scroll $R'$. Since $R' = Q \cdot V$ and $Q$ is smooth, it follows that $P_o \cdot Q = n' \cup n''$.
 Hence $P_o$ is a tangent plane to $Q$ at $o$ and each line of  the pencil $N_o$ of center $o$ is tangent to $Q$. This implies $N_o \subset S_A$ and hence $S'_A \subset W$.
 \end{proof}
 \begin{proposition}Ê$\Sing S'_A = A$ and $A$ has multiplicity two. \end{proposition}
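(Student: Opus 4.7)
The plan is to analyze the morphism $\tau\colon S_A \to S'_A \subset T$ as a normalization. Since $S_A$ is a smooth $\mathbf{P}^1$-bundle over $B \cong \mathbf{P}^1$ and $\tau$ is the restriction of the second projection from $B \times T$, it is projective; once $\tau$ is shown to be finite and birational, $S_A$ becomes the normalization of $S'_A$, and $\Sing S'_A$ coincides with the image of the locus where $\tau$ is not an isomorphism, with multiplicities dictated by fibre length.

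The central computation is a description of the fibres of $\tau$. A point $(o,n) \in S_A$ requires $o \in B$ to lie on the line $n$ and $n$ to lie in the plane $P_o$, so two distinct preimages $o_1 \neq o_2$ of the same $n$ force $n$ to meet $B$ in two points, i.e.\ $n$ is a bisecant of $B$ and hence corresponds to a point of the Veronese surface $Z \subset G(1,4)$. The key sub-lemma is then $Z \cap T = A$: since $Z$ is the $3$-Veronese of $\mathbf{P}^2 = \Hilb_2 B$ and $A$ is the image of a conic $C \subset \mathbf{P}^2$, the linear forms on $Z$ vanishing on $A$ are the cubics of shape $C \cdot \ell$, a three-dimensional subspace; thus $\langle A\rangle = \mathbf{P}^6$, the common zeros of the $C \cdot \ell$ on $\mathbf{P}^2$ are exactly $C$, and consequently $Z \cap T = Z \cap \langle A\rangle = A$.

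It follows that for $n \in T \setminus A$ the fibre $\tau^{-1}(n)$ is a single reduced point, whereas at a general $a \in A$ the associated bisecant meets $B$ in distinct points $o_1, o_2$ and is itself one of the two lines of $R'$ through each $o_i$, so it lies in $P_{o_i}$ and $\tau^{-1}(a) = \{(o_1,a),(o_2,a)\}$. Since $S_A$ is smooth and $\tau$ is finite of generic degree one, $\tau$ realises $S_A$ as the normalization of $S'_A$, and $\Sing S'_A = A$ set-theoretically.

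The main obstacle is to promote this to the multiplicity statement that $A$ enters $\Sing S'_A$ with multiplicity two. I would check that the two analytic branches of $S'_A$ at a general $a \in A$ meet transversally: the branch through $(o_i, a)$ is swept by the pencils $N_o$ as $o$ varies near $o_i$ in $B$ and contains the line $N_{o_i} \subset \mathbf{P}^9$, so the two branches inherit distinct tangent directions from the two distinct pencils $N_{o_1} \neq N_{o_2}$, and a short local computation rules out any tangency of the full tangent planes. Alternatively, the statement follows from a degree count: $S'_A$ has degree $10$ in $\mathbf{P}^9$ from its class $(4,6)$ in $G(1,4)$, while $\tau$ is $2{:}1$ over the sextic $A$ and $1{:}1$ elsewhere, so the only possible non-reduced contribution to $S'_A$ comes from $A$ with multiplicity exactly two.
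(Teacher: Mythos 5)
Your proof is correct and follows the same overall strategy as the paper: both arguments reduce the proposition to a computation of the fibres of $\tau$, observing that a point $n\in S'_A$ is singular exactly when the fibre $\tau^{-1}(n)$ has length $\geq 2$, which forces the line $n$ to meet $B$ in a scheme of length $\geq 2$. Where you genuinely add something is in the identification of this locus with $A$: the paper simply asserts that the fibre of $\tau$ over $n$ is the scheme-theoretic intersection $B\cap n$ and that the singular points are the bisecants of $B$ contained in $R'$, whereas you prove the clean sub-lemma $Z\cap T=A$ (with $Z$ the $3$-Veronese surface of bisecant lines from Lemma 5.1) via the count of cubics of the form $C\cdot\ell$. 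This is a nice way to close the gap the paper leaves open, namely why a bisecant or tangent line of $B$ lying in some plane $P_o$ cannot occur in $S'_A$ outside $A$; note your argument should be phrased for fibres of length $\geq 2$ rather than ``two distinct preimages,'' so as to cover tangent lines of $B$, but these correspond to points of $Z$ over the diagonal $D$ and are handled by the same lemma. Two small criticisms on the multiplicity step: the transversality of the two branches is not needed (two smooth analytic branches already give Hilbert--Samuel multiplicity two, which is what is used later in Lemma 6.7), and the ``alternative'' degree count is not a proof --- the degree $10$ of $S'_A$ does not by itself bound or determine the multiplicity along $A$. The branch-counting argument is the one to keep, and it is essentially what the paper's terse proof intends.
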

\begin{proof} Let us consider again the morphism  $\tau: S_A \to S'_A$ and a point $n \in S'_A$. From the definition of $S_A$ it follows that  $\tau^*n$ is the scheme theoretic intersection of
$B$ and the line parametrized by $n$. This easily implies that $\tau$ is birational and, moreover, that $n$ is singular iff parametrizes a bisecant line to $B$ contained in $R'$, that is, $n \in A$
and $n$ has multiplicity two.
 \end{proof}
 \begin{remark} \rm We only mention, without proofs, how the scroll $S_A$ is related to the Fano variety $F(T)$ of lines of the smooth threefold $T$. It  is well known that every three dimensional linear section
 of $G(1,4)$ is projectively equivalent to $T$ and that $F(T)$ is a surface. The family of lines of $T$ has been studied in detail, cfr.\cite{FN, TZ}. Actually $F(T)$ is $\mathbf P^2$, let $u: \mathbb U \to \mathbf P^2$
 be the universal line and $\phi: \mathbb U \to G(1,4)$ the natural projection. For a plane curve $D \subset \mathbf P^2$ of degree $d$ we have that $\phi_* u^*D$ belongs to $\vert \mathcal O_T(d) \vert$. In
 particular we have $S_A = \phi_* u^* A'$, where $A'$ is a conic.
 \end{remark}
 
Finally we can pass to the construction of the predicted family of Nikulin surfaces of genus $8$. \par We start with a general $A \in \mathbb A$. Then $A$ is a rational normal sextic in the smooth threefold $T$.
Both $T$ and $A$ are projectively normal and  generated by quadrics.
Let $\mathcal I_A$ be the ideal sheaf of $A$ in $T$, we have $\dim \vert \mathcal I_A(2) \vert = 9$.
\begin{lemma}ÊA general $S \in \vert \mathcal I_A(2) \vert$ is a smooth K3 surface of genus $6$. \end{lemma}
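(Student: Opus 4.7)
The plan is to split the claim into two independent pieces: first, show that any \emph{smooth} $S \in |\mathcal{O}_T(2)|$ is a K3 surface of genus $6$ (pure adjunction/cohomology on $T$); then show that a \emph{general} member of the sub-linear system $|\mathcal{I}_A(2)|$ is in fact smooth (a Bertini-type analysis).

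\textbf{K3 and genus.} Since $T$ is a smooth quintic Del Pezzo threefold, $-K_T \sim 2H$, and adjunction gives $K_S = (K_T + 2H)|_S \cong \mathcal{O}_S$ for any smooth $S \in |2H|$. The short exact sequence $0 \to \mathcal{O}_T(-2) \to \mathcal{O}_T \to \mathcal{O}_S \to 0$ together with Kodaira vanishing on $T$ forces $h^1(\mathcal{O}_S) = 0$, so $S$ is K3. The polarization has self-intersection $H|_S^2 = 2H^3 = 10$, hence genus $g = 6$.

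\textbf{Smoothness analysis.} The base locus of $|\mathcal{I}_A(2)|$ on $T$ is exactly $A$: indeed $A$ is a rational normal sextic in its span $\mathbf{P}^6 = \langle A \rangle$ and is therefore cut out ideal-theoretically by quadrics, so restricting these quadrics to $T$ already produces a sub-system with base scheme $A$. Bertini then gives smoothness of a general $S$ away from $A$. For smoothness \emph{along} $A$ I would pass to the blow-up $\pi\colon \widetilde T \to T$ of $A$ with exceptional divisor $E$, so that proper transforms of general members of $|\mathcal{I}_A(2)|$ lie in $|2\pi^*H - E|$; it suffices to prove this linear system is base-point-free along $E$. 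Equivalently, the evaluation map
$$H^0\bigl(T,\mathcal{I}_A(2)\bigr) \longrightarrow H^0\bigl(A,\mathcal{N}^{*}_{A/T} \otimes \mathcal{O}_A(2)\bigr)$$
must be surjective and its target sheaf must be globally generated on $A$.

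\textbf{Main obstacle.} The delicate point is controlling the splitting of $\mathcal{N}^{*}_{A/T}(2)$: from $0 \to T_A \to T_T|_A \to N_{A/T} \to 0$ one computes $\deg N_{A/T} = -K_T\cdot A - 2 = 10$, so $\mathcal{N}^{*}_{A/T}(2)$ has degree $2$ on $A \cong \mathbf{P}^1$, and I need a non-negative splitting ($\mathcal{O}(1)^{\oplus 2}$ or $\mathcal{O} \oplus \mathcal{O}(2)$) to get global generation. This should follow from the \emph{special} nature of the embedding produced in Section~$5$: since $A$ lies inside the Veronese surface $Z \subset G(1,4)$, the bundle $N_{A/T}$ fits in a short exact sequence involving $N_{A/Z}$ and $N_{Z/G(1,4)}|_A$, both of which are computable from standard Grassmannian/Veronese data. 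The cohomological surjectivity in turn reduces to $H^1(T,\mathcal{I}_A^2(2)) = 0$, obtainable from the conormal sequence together with Kodaira-type vanishing on $T$. A clean alternative, should these calculations become cumbersome, is to exhibit a single smooth $S \in |\mathcal{I}_A(2)|$ explicitly and invoke semicontinuity over the flat family parametrised by $|\mathcal{I}_A(2)|$ to deduce generic smoothness.
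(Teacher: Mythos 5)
Your first half (adjunction on the quintic Del Pezzo threefold gives $K_S\cong\mathcal O_S$, the ideal sequence gives $h^1(\mathcal O_S)=0$, and $H^3=5$ gives genus $6$) is correct, and Bertini away from the base locus $A$ is also fine. The problem is the step you yourself flag as the main obstacle — smoothness along $A$ — which you do not complete, and the sufficient condition you propose for it is in fact unattainable. First, a computational slip: $\mathcal N^*_{A/T}\otimes\mathcal O_A(2)$ is a rank-two bundle of degree $-10+2\cdot 12=14$ (not $2$), so $h^0\bigl(\mathcal N^*_{A/T}(2)\bigr)=16$, while $h^0(\mathcal I_A(2))=10$. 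Hence the evaluation map $H^0(T,\mathcal I_A(2))\to H^0(A,\mathcal N^*_{A/T}(2))$ can never be surjective, and the criterion you reduce to (surjectivity onto global sections plus global generation of the target) is strictly stronger than what base-point-freeness of $|2\pi^*H-E|$ along $E$ actually requires. What is needed is only the \emph{fiberwise} statement: for every $n\in A$ the composition $H^0(\mathcal I_A(2))\to \mathcal N^*_{A/T}(2)\otimes k(n)$ is surjective onto the two-dimensional fiber. That is precisely the statement that $\mathcal I_A$ is generated by quadrics at the points of $A$, which the paper has already recorded.

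This is exactly how the paper argues, and it bypasses your blow-up and conormal-splitting analysis entirely: for fixed $n\in A$, the locus $I_n=\lbrace S\in|\mathcal I_A(2)|\ :\ n\in\Sing S\rbrace$ has codimension exactly two in the nine-dimensional linear system, because being singular at $n$ for a divisor already containing $A$ imposes the two independent conditions of killing the fiber of $\mathcal N^*_{A/T}(2)$ at $n$, and independence is guaranteed by generation by quadrics. Sweeping over the one-dimensional family $A$, the union $\bigcup_{n\in A}I_n$ has codimension at least one, so a general member is smooth along $A$; Bertini handles $T\smallsetminus A$. To repair your write-up, replace the global surjectivity requirement by this fiberwise one (or equivalently run your blow-up argument using only fiberwise generation of $\mathcal O_E(1)\otimes\pi^*\mathcal O_A(2)$ on each ruling $E_n\cong\mathbf P^1$); as written, the route you propose leads to a condition that provably fails.
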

\begin{proof}Ê Let $n \in A$ and $I_n = \lbrace S \in \vert \mathcal I_A(2) \vert \ / \ n \in \Sing S \rbrace$. Since $A$ is a smooth curve generated by quadrics, the codimension of  $I_n$ is two. Hence 
$\cup_{\small n \in A} I_n$ is a proper closed set and a general $S \in \vert \mathcal I_A(2) \vert$ is smooth along $A$. On the other hand a general $S$ is smooth on $T-A$ by Bertini theorem. \end{proof}
 We consider the morphism $\tau: S_A \to S'_A$ and the linear projection
$$
\tau^*: \vert \mathcal I_A(2) \vert \to \vert \tau^* \mathcal O_{S_A}(2) \vert.
$$
Since $S'_A \in \vert \mathcal I_A(2) \vert$ the image of $\tau^*$ is a linear system $\mathbb I$ of dimension $8$.   Since $A$ and $T$ are generated by quadrics, the fixed component of $\mathbb I$ is $\tau^*A$. 

\begin{lemma} Let $\pi: S_A \to B$ be the natural projection then 
$$ \mathbb I = \tau^*A +  \pi^* \vert \mathcal O_{\mathbf P^1}(8) \vert. $$
\end{lemma}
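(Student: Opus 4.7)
The plan is to argue via intersection theory on the ruled surface $S_A$. Write $h := \tau^*\mathcal O_T(1)$ and let $f$ denote the class of a fiber of $\pi:S_A\to B$. I first establish four intersection numbers. For $h\cdot f$: each fiber $N_o$ of $\pi$ is a pencil of lines in $\mathbf P^4$, and the Pl\"ucker map sends this pencil to a line in $T\subset\mathbf P^9$, hence $h\cdot f=1$. For $h^2$: the morphism $\tau:S_A\to S'_A$ is birational and $S'_A$ has degree $10$ in $\mathbf P^9$ (consistently, $S'_A\in|\mathcal O_T(2)|$ and $T$ has degree $5$), so $h^2=10$. For $\tau^*A\cdot f$: a general fiber $N_o$ meets $\tau^{-1}(A)$ at precisely the two rulings of $R'$ through $o$, namely the two components of $\sigma_*\upsilon^*n_o$, giving $\tau^*A\cdot f=2$. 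For $\tau^*A\cdot h$: the restriction $\tau:\tau^*A\to A$ is $2{:}1$ since each $a\in A$ is a bisecant to $B$ with two endpoints, and $\deg A=6$, yielding $\tau^*A\cdot h=12$.

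With these invariants fixed, I take a general $S\in|\mathcal I_A(2)|$; by the preceding lemma $S$ is smooth along $A$, so $A$ appears in $S$ with multiplicity one. Writing $\tau^*S=\tau^*A+D_S$ with $D_S$ effective, the numerical class $\tau^*S\equiv 2h$ gives $D_S\cdot f=2-2=0$. On a Hirzebruch surface, an effective divisor with zero intersection with every ruling is a sum of fibers, so $D_S=\pi^*D$ for some effective divisor $D$ on $B\cong\mathbf P^1$. Its degree is $D_S\cdot h=2h^2-\tau^*A\cdot h=20-12=8$. This yields the inclusion $\mathbb I\subseteq\tau^*A+\pi^*|\mathcal O_{\mathbf P^1}(8)|$.

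Equality then follows from a dimension count: the right-hand side has projective dimension $8$, and $\mathbb I$ is the image of the linear projection $\tau^*:|\mathcal I_A(2)|\cong\mathbf P^9\dashrightarrow|2h|$, whose only base point is $S'_A$ (the unique member of $|\mathcal I_A(2)|$ cut on $T$ by the tangential complex $W$), so $\mathbb I$ is itself a linear subspace of dimension $8$. A linear subspace contained in another of the same dimension must coincide with it, giving the result. The main subtlety is the justification that $\tau^*A$ enters $\tau^*S$ with multiplicity exactly one and that $\tau$ is birational onto $S'_A$; both rest on the preceding smoothness results and the explicit description of the tangential complex. Once these are granted, the rest is the routine intersection computation sketched above.
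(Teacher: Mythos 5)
Your proof is correct and follows essentially the same route as the paper: both decompose a general member of $\mathbb I$ as $\tau^*A$ plus a residual divisor, show by a degree count that the residual consists of eight fibres of $\pi$, and conclude by comparing the dimension $8$ of $\mathbb I$ with that of $\pi^*\vert \mathcal O_{\mathbf P^1}(8)\vert$. The only difference is cosmetic: you organize the computation via intersection numbers on the ruled surface $S_A$ (making explicit the step the paper dismisses with ``it is easy to deduce''), whereas the paper counts degrees of $Q\cdot S'_A$ directly in $T$.
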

\begin{proof} Let $Q$ be a general quadric through $A$, not containing $S'_A$. Then $Q \cdot S'_A$ is a curve of degree 20 such that $A$ counts with multiplicity two. The residual component has
degree $8$. On the other hand let $o \in B$ and $N_o = \pi^*(o)$, then $N_o$ is embedded as a bisecant line to $o$. Indeed $N_o$ is the pencil of lines generated by the two lines of $R'$ passing
through $o$ and they define the two points of $N_o \cap A$. It is easy to deduce that the residual component is union of lines of the ruling of $S'_A$. Hence we have $\tau^*Q = \tau^*A + M$, where
$M \in \vert \mathcal O_{\mathbf P^1}(8) \vert$. The latter space has the same dimension of $\mathbb I$. Hence the statement follows. \end{proof}
Let $S \in \vert \mathcal I_A(2) \vert$ then $S$ is a smooth K3 surface. The lemma implies that $S$ contains eight disjoint lines $N_1, \dots, N_8$ so that $AN_i = 2$, $i = 1 \dots 8$, and
$$
S \cdot S'_A = 2A + N_1 + \dots + N_8.
$$
 We can definitely conclude, by theorem 2.7, that $S$ is a Nikulin surface.
\begin{theorem} Let $A \in \mathbb A$ be general, then a general $S \in \vert \mathcal I_A(2) \vert$ is a Nikulin surface of genus $8$. \end{theorem}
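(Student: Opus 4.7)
The plan is to verify the hypotheses of Theorem 2.7 for a suitable triple $(S, \mathcal L, \mathcal H)$ constructed on $S$. First, by Lemma 6.5 a general $S \in |\mathcal I_A(2)|$ is a smooth K3 surface; since $S$ is cut on the smooth quintic Del Pezzo threefold $T \subset \mathbf P^6$ by a quadric, the restriction $\mathcal H := \mathcal O_S(1)$ satisfies $H^2 = 10$ and makes $S$ a K3 surface of genus $6$ embedded in $\mathbf P^6$.

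Next, I would extract from Lemma 6.6 the divisor identity $\tau^* S \sim \tau^* A + F_1 + \dots + F_8$ on $S_A$, where the $F_i = \pi^{-1}(p_i)$ are eight fibers of $\pi : S_A \to B$ over a general degree-eight divisor on $B$. Pushing forward along the normalization $\tau : S_A \to S'_A$, which is two-to-one over $A$ and an isomorphism elsewhere, yields the cycle identity
$$
S \cdot S'_A \;=\; 2A + N_1 + \dots + N_8 \qquad \text{in } \Div(S),
$$
where $N_i := \tau(F_i)$ is a line of $T$ lying in the ruling of $S'_A$ and centered at $p_i$. The relation $A \cdot N_i = 2$ is immediate: the two points of $A \cap N_i$ correspond to the two lines of $R'$ through $p_i$. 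For general $S$ the eight $N_i$ are pairwise disjoint on $S$.

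Since $S, S'_A \in |\mathcal O_T(2)|$, the left side above has class $2\mathcal H$ on $S$, whence $2H \sim 2A + N$ with $N := N_1 + \dots + N_8$. Setting $M := H - A$ gives the Nikulin relation $2M \sim N$. I would put $\mathcal L := \mathcal O_S(H + M) = \mathcal O_S(2H - A)$ and $\mathcal M := \mathcal O_S(M)$; using $H^2 = 10$, $H \cdot A = 6$ and $A^2 = -2$ (since $A \cong \mathbf P^1$ on a K3), one computes $\mathcal L^2 = 14$, so $\mathcal L$ has genus $8$, together with $\mathcal L \cdot \mathcal H = 14$ and $\mathcal L \cdot \mathcal M = 0$. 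The auxiliary line bundle $\mathcal A := \mathcal L(-2M) = \mathcal O_S(A)$ then contains $A$, and the bisecant condition $A \cdot N_i = 2$ is satisfied, so Theorem 2.7 applies and $(S, \mathcal L, \mathcal H)$ defines a point of $\mathcal F^N_8$.

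The main obstacle is the genericity step needed to invoke Theorem 2.7, which is stated for a \emph{general} triple in $\mathcal D'_8$. Pseudoampleness of $\mathcal L$ is clear from $\mathcal L = \mathcal H + \mathcal M$ with $\mathcal H$ ample, but primitivity requires checking that for general $S$ the Picard lattice acquires no extra classes beyond $\mathbb Z \mathcal L \oplus \mathbb L_S$; this is a standard semicontinuity/parameter-count argument on the family $\mathbb A$. A parallel genericity check is needed for the disjointness of the eight $N_i$: two pencils of bisecants centered at distinct points of $B$ share a line only when the joining plane passes through both centers, a codimension-one condition on the chosen eight-point divisor on $B$, hence avoided for general $S$.
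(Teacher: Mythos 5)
Your proof follows the paper's own route: Lemma 6.5 for smoothness, Lemma 6.6 pushed forward along $\tau$ to get $S \cdot S'_A = 2A + N_1 + \dots + N_8$ with the eight disjoint bisecant lines, and then Theorem 2.7; the lattice computations ($\mathcal L^2 = 14$, $\mathcal L \cdot \mathcal M = 0$, $M^2 = -4$) all check out. Your closing worry about the ``general triple'' hypothesis is largely moot, since your direct derivation of $2H \sim 2A + N$ from the cycle identity already yields the Nikulin relation $2M \sim N$ without the Hodge-index step in the proof of Theorem 2.7, and primitivity of $\mathcal L$ is automatic from $\mathcal L^2 = 14$ being twice an odd number.
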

\section{The rationality of $\mathcal F^N_8$}
In this section we show the rationality of the moduli space $\mathcal F^N_8$ of genus 8 Nikulin surfaces and deduce theorem 4.4 from the method of proof.
\par To this purpose let us consider
in $\mathbb A \times G(1,4)$ the universal families $\mathcal R$ and $\mathcal T$, respectively parametrizing pairs $(A, z)$ such that $z \in A$ and $z \in T$,  $T :=$  $< A > \cdot G(1,4)$. With
some abuse we still denote by $\mathbb A$ a suitable open set of $\mathbb A$, parametrizing general and smooth rational normal sextics. Then we consider
the ideal sheaf $\mathcal I$ of $\mathcal R$ in $\mathcal T$ and the vector bundle 
$$
\mathcal V := \alpha_*( \mathcal I \otimes \beta^* \mathcal O_{G(1,4)}(2)).
$$
Here $\alpha$ and $\beta$ are the projections of $\mathbb A \times G(1,4)$ respectively onto $\mathbb A$ and $G(1,4)$. The construction defines the $\mathbf P^9$-bundle
$\alpha: \mathbb P \to \mathbb A$, where we set $$ \mathbb P := \mathbf P \mathcal V. $$
The fibre $\mathbb P_A$ of $\alpha$ at $A \in \mathbb A$ is precisely the linear system of Nikulin surfaces $\vert \mathcal I_A(2) \vert$ considered in theorem 6.6. On the other
hand we know that  $\mathbb A = \vert \mathcal O_{\mathbf P^2}(2) \vert $ and this is isomorphic to $\vert \mathcal I_B(2) \vert$ via the linear map
$$
\sigma_* \circ \upsilon^*: \mathbb A \to \vert \mathcal I_B(2) \vert.
$$
A general point of $\mathbb P$ is just a pair $(S, A)$ such that $S$ is a Nikulin surface of the linear system $\mathbb P_A$. More precisely its genus $8$ polarization is uniquely defined
from the pair as $\mathcal L = \mathcal H(A)$, where $\mathcal H = \mathcal O_S(1)$. Moreover we have $\mathcal H(-A) \cong \mathcal O_S(M)$ and  $2M \sim N_1 + \dots + N_8$, where 
the summands are the eight lines contained in $S \cap S_A$. We want to study the moduli map
$$
m: \mathbb P \to \mathcal F^N_8
$$
in order to prove that it is dominant. Then we consider two general elements $(S_1, A_1)$ and $(S_2, A_2)$ of $\mathbb P$. Let $ i = 1,2$: as above the pair $(S_i, A_i)$ uniquely defines the triple of
line bundles $(\mathcal L_i, \mathcal H_i, \mathcal A_i)$ where $\mathcal A_i = \mathcal O_{S_i}(A_i)$. Assume that the two elements have the same image by $m$. This is equivalent to say that there
exists an isomorphism  $f: S_1 \to S_2$ such that $f^*\mathcal H_2 \cong \mathcal H_1$ and $f^* \mathcal A_2 \cong \mathcal A_1$. Now recall that $\mathcal H_i$ uniquely defines the Mukai vector
bundle $\mathcal E_i$ of $S_i$ and that this is the restriction to $S_i$ of the universal bundle on $G(1,4)$. Hence it follows that $f^* \mathcal E_2 \cong \mathcal E_1$ and that $f$ defines an isomorphism
$$
f^*: H^0(\mathcal E_2) \to H^0(\mathcal E_1).
$$
This isomorphism and the property $f^* \mathcal A_1 \cong \mathcal A_1$ imply the next lemma.
\begin{lemma}Ê$(S_1, A_1)$  and $(S_2, A_2)$ have the same image by $m$ iff there exists $a \in \Aut G(1,4)$ such that $a(S_1) = S_2$ and $a(A_1) = A_2$. \end{lemma}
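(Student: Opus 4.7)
The \emph{if} direction is immediate: any $a\in\Aut G(1,4)$ with $a(S_1)=S_2$ restricts to an isomorphism $f:=a|_{S_1}:S_1\to S_2$ satisfying $f^*\mathcal O(1)\cong\mathcal O(1)$, hence $f^*\mathcal H_2\cong\mathcal H_1$; if in addition $a(A_1)=A_2$ then $f^*\mathcal A_2\cong\mathcal A_1$, and the two pairs have the same image under $m$.

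For the converse, the paragraph preceding the lemma already produces the abstract isomorphism $f^*:H^0(\mathcal E_2)\to H^0(\mathcal E_1)$. The first step is to repackage it as an element of the fixed group $\Aut G(1,4)=\PGL(5)$. Since $\mathcal E_i$ is the restriction to $S_i$ of the tautological quotient bundle $\mathcal U^*$ on $G(1,4)$, and we have $h^0(\mathcal E_i)=5=h^0(\mathcal U^*)$ together with $h^1(\mathcal E_i)=0$, the restriction $\rho_i:H^0(\mathcal U^*)\to H^0(\mathcal E_i)$ is an isomorphism for $i=1,2$. Then $\rho_1^{-1}\circ f^*\circ \rho_2$ is an element of $GL\bigl(H^0(\mathcal U^*)\bigr)$ which, modulo scalars, defines the required $a\in\PGL(5)\subset\Aut G(1,4)$.

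Next I would verify $a(S_1)=S_2$ via the naturality of the classifying map. The chosen isomorphism $\phi:f^*\mathcal E_2\cong\mathcal E_1$ inducing $f^*$ on global sections identifies, fibre by fibre at $x\in S_1$ and $f(x)\in S_2$, the two-dimensional evaluation quotients of $H^0(\mathcal E_1)$ and $H^0(\mathcal E_2)$; under $\rho_1,\rho_2$ this is precisely the relation $a\circ f_{\mathcal E_1}=f_{\mathcal E_2}\circ f$. Since the classifying maps are the given embeddings $S_i\hookrightarrow G(1,4)$, this yields $a(S_1)=S_2$.

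To finish, $a(A_1)=A_2$ reduces to uniqueness of the sextic: by Proposition 2.4 with $g=8$ we have $p_a(A_i)=0$, and together with the Picard computation used in Section 3, $|\mathcal A_i|=\{A_i\}$. The hypothesis $f^*\mathcal A_2\cong\mathcal A_1$ forces $f(A_1)\in|\mathcal A_2|$, hence $f(A_1)=A_2$, and then applying the commutative square gives $a(A_1)=A_2$. The genuinely delicate point of this plan is the first step, namely identifying the two spaces $H^0(\mathcal E_i)$ with a common $5$-dimensional vector space through $\rho_1,\rho_2$: without this, $f^*$ is only an abstract linear isomorphism and does not directly yield an element of a single group acting on one fixed Grassmannian. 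Once that identification is in place, the rest is functoriality of the classifying construction combined with the uniqueness of $A_i$ on a general Nikulin surface.
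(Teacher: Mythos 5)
Your proposal is correct and follows essentially the same route as the paper, whose ``proof'' is the paragraph preceding the lemma: uniqueness of the Mukai bundle lifts $f$ to an isomorphism $f^*:H^0(\mathcal E_2)\to H^0(\mathcal E_1)$, which via the tautological identification of each $H^0(\mathcal E_i)$ with $H^0$ of the universal bundle yields $a\in\PGL(5)=\Aut G(1,4)$ carrying $S_1$ to $S_2$, and uniqueness of $A_i$ in $\vert\mathcal A_i\vert$ gives $a(A_1)=A_2$. You in fact supply more detail than the paper does; only note that the restriction $\rho_i$ is an isomorphism tautologically from the classifying-map construction (and Mukai's condition (iii)), not merely from the coincidence of dimensions.
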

Due to the lemma we now study the group $G \subset \Aut G(1,4)$ leaving  the set of pairs $\mathbb P$ invariant. We have $\Aut G(1,4) = PGL(5)$: let $a \in G$ then $a$ is induced by an
automorphism of $\mathbf  P^4$. \par By definition $a$ leaves invariant the family of curves $\mathbb A$, therefore $G$ acts on $\mathbb A$ as well. In particular  $a$ leaves invariant the set of all  lines in $\mathbf P^4$ parametrized by $\cup A, \ A \in \mathbb A$. This is precisely the family of bisecant lines to the rational normal quartic $B$, hence $a$ leaves $B$ invariant. Since $a$ is the identity iff $a/B: B \to B$
is the identity, we can conclude as follows.
\begin{lemma}Ê$G = \Aut B = \PGL(2)$. \end{lemma}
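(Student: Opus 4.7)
The plan has three parts: (i) record that the discussion preceding the statement already provides a restriction homomorphism $\rho : G \to \Aut B$, $a \mapsto a|_B$, and verifies its injectivity; (ii) identify $\Aut B$ with $\PGL(2)$; (iii) prove that $\rho$ is surjective by extending each $\phi \in \Aut B$ to an automorphism of $\mathbf P^4$ and tracking it through every layer of the construction of $\mathbb P$.

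For (i), the text has shown that every $a \in G$ stabilizes $B$, and that $a|_B = \mathrm{id}_B$ forces $a = \mathrm{id}$: any five distinct points on the rational normal quartic $B$ span $\mathbf P^4$, so six points of $B$ form a projective frame and fix $a \in \PGL(5)$ uniquely. For (ii), since $B \cong \mathbf P^1$, we have $\Aut B \cong \Aut \mathbf P^1 = \PGL(2)$ tautologically. For (iii), given $\phi \in \Aut B$, I would invoke the well-known fact that any automorphism of a rational normal curve of degree $n$ in $\mathbf P^n$ extends uniquely to $\PGL(n+1)$ — equivalently, the inclusion $\PGL(2) \hookrightarrow \PGL(5)$ is the fourth-symmetric-power representation. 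This produces $\tilde\phi \in \PGL(5)$ stabilizing $B$, and the task becomes showing $\tilde\phi \in G$.

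To place $\tilde\phi$ in $G$ I would trace its induced action. Since $\tilde\phi$ is an automorphism of $\mathbf P^4$ stabilizing $B$, it acts on $G(1,4)$ functorially, preserves the family of bisecant lines to $B$, and hence stabilizes $V = \Sec B$ and the surface $Z \subset G(1,4)$. Through the identification $Z \cong \Hilb_2(B) \cong \mathbf P^2$ of Section 5, $\tilde\phi$ acts linearly on $\mathbf P^2$, so it preserves the complete linear system $\mathbb A = |\mathcal O_{\mathbf P^2}(2)|$ of special rational normal sextics. For any $(S,A) \in \mathbb P$, linearity of $\tilde\phi$ on $\mathbf P^9$ sends $\langle A \rangle$ onto $\langle \tilde\phi(A)\rangle$, hence sends $T_A = \langle A \rangle \cdot G(1,4)$ isomorphically onto $T_{\tilde\phi(A)}$ and induces a linear isomorphism $|\mathcal I_A(2)| \to |\mathcal I_{\tilde\phi(A)}(2)|$ carrying Nikulin surfaces to Nikulin surfaces. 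Thus $(\tilde\phi(S), \tilde\phi(A)) \in \mathbb P$, so $\tilde\phi \in G$ and $\rho$ is surjective.

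The only potential obstacle is bookkeeping: verifying that the chain of identifications $Z \cong \Hilb_2(B) \cong \mathbf P^2$, $\mathbb A \cong |\mathcal O_{\mathbf P^2}(2)|$, and $T_A \mapsto |\mathcal I_A(2)|$ is compatible with the extension $\tilde\phi$. Each step is manifestly natural in $\mathbf P^4$, so the compatibility is automatic and no deeper argument is required.
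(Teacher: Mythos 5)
Your proof is correct and follows essentially the same route as the paper: the text immediately preceding the lemma establishes exactly your step (i) (every $a\in G$ stabilizes $B$ and is determined by $a|_B$, since $B$ contains a projective frame of $\mathbf P^4$), and the identification $\Aut B=\PGL(2)$ is your step (ii). The paper leaves your step (iii) — surjectivity of $G\to\Aut B$ via the symmetric-power extension $\PGL(2)\hookrightarrow\PGL(5)$ and the naturality of the whole construction $B\rightsquigarrow V\rightsquigarrow Z\rightsquigarrow\mathbb A\rightsquigarrow\mathbb P$ — entirely implicit, so your write-up is if anything more complete than the original.
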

Note that $\mathbb A$ is $\mathbf P^5$ and that $\dim \PGL(2) = 3$. Therefore the quotient of the action of $G$ on the base of the projection map $\alpha: \mathbb P \to \mathbb A$ is
rational. Indeed this quotient $\mathbb A / G$ is a surface. Therefore, since it is unirational, $\mathbb A / G$ is rational. \par It is useful to reconsider the action of $G$ on $\mathbb A$ as
follows. Recall that we have $\mathbb A = \vert \mathcal O_{\mathbf P^2}(2) \vert$, where $\mathbf P^2 = \Hilb_2(B)$. What is the action of $G$ on $\vert \mathcal O_{\mathbf P^2}(2) \vert$? To answer we recall that 
$\Hilb_2(B)$ contains a distinguished conic, namely the diagonal $D$  already considered in section 5. \par $D$
parametrizes the family of tangent lines to $B$ and this family is left invariant by $G$. Hence $a \in G$ acts on $\vert \mathcal O_{\mathbf P^2}(2) \vert$ as an  element $a' \in  \Aut \mathbf P^2$ leaving $D$ fixed. The map sending $a$ to $a'$ is an isomorphism of $G$ and $\Aut D$.  
\begin{lemma} The action of $G$ on $\mathbb A$ is generically faithful. \end{lemma}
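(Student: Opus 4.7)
My plan is to bound the stabilizer of a general $A \in \mathbb{A}$ by using the $G$-invariant conic $D$ to reduce it to a permutation action on a $4$-point subset of $B$; combined with the injectivity $G \cong \Aut D \hookrightarrow \Aut \mathbf{P}^2$ recorded just above, this will give generic faithfulness.

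For a general $A \in \mathbb{A}$, $A$ is a smooth conic meeting $D$ transversally in four distinct points. Under the identification $D \cong B$ given by $b \mapsto 2b$, the intersection $A \cap D$ corresponds to a $4$-tuple $\{b_1, b_2, b_3, b_4\} \subset B \cong \mathbf{P}^1$ of general cross-ratio. Any $g$ in the stabilizer of $A$ in $G$ preserves $A$ by hypothesis and preserves $D$ by $G$-invariance, hence permutes $A \cap D$; reading this as an action of $G = \Aut B$ on $\{b_1, \ldots, b_4\}$ yields a homomorphism
\[
\rho\colon \mathrm{Stab}_G(A) \longrightarrow S_4.
\]
The kernel of $\rho$ consists of elements of $\PGL(2) = \Aut B$ fixing four distinct points of $\mathbf{P}^1$ pointwise, and by sharp $3$-transitivity of $\PGL(2)$ any such element is the identity. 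Hence $\mathrm{Stab}_G(A)$ embeds into $S_4$ and is in particular finite, so no positive-dimensional subgroup of $G$ fixes a general $A$.

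Combined with the injectivity $G \hookrightarrow \Aut \mathbf{P}^2 \hookrightarrow \Aut \mathbb{A}$---the first inclusion by the identification $G \cong \Aut D$ just above, the second because a projective automorphism of $\mathbf{P}^2$ fixing every conic must fix every reducible conic, hence every line, hence every point---this shows that $G \to \Aut \mathbb{A}$ has trivial kernel and finite generic stabilizer, which is precisely the generic faithfulness of the action and legitimizes the dimension count $\dim(\mathbb{A}/G) = \dim \mathbb{A} - \dim G = 2$ used above. I do not expect any serious obstacle; the only point to pin down is that a general $A$ really meets $D$ in four distinct points of generic cross-ratio, which is a non-empty open condition on $\mathbb{A}$.
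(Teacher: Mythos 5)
Your strategy is the same as the paper's: view $G$ as $\Aut D\subset\Aut\mathbf P^2$, note that any $a$ in the stabilizer $G_A$ of a general conic $A$ preserves the four-point set $A\cap D$, and use sharp $3$-transitivity of $\PGL(2)$ on $D\cong\mathbf P^1$. The divergence is in the conclusion. You only deduce that $G_A$ injects into $S_4$, hence is finite, whereas the paper concludes that $a$ is the identity, i.e.\ that the general stabilizer is trivial. Finiteness is all that is needed for $\dim(\mathbb A/G)=2$ and the rationality of $\overline{\mathbb A}$, but it is not what the lemma is used for two lines later: the descent of $\alpha:\mathbb P\to\mathbb A$ to a $\mathbf P^9$-bundle over $\overline{\mathbb A}$ requires the action on $\mathbb A$ to be generically free, since otherwise the fibre of $\overline\alpha$ over $[A]$ is $\mathbf P^9/G_A$ rather than $\mathbf P^9$. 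So, measured against the paper, your proof stops short: you would still have to show that no nontrivial permutation of $A\cap D$ is realized by an element of $G_A$.

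That missing step cannot be supplied, and this is worth recording. For any four distinct points of $\mathbf P^1$ the three double transpositions are realized by involutions in $\PGL(2)$ (the cross-ratio is invariant under the Klein four-group), so $G$ contains a copy of $V_4$ permuting $A\cap D$. Viewed in $\Aut\mathbf P^2$, this $V_4$ is the kernel of the $S_4$-action on the three singular members of the pencil of conics through the four points of $A\cap D$; it therefore fixes three points of that pencil, hence acts trivially on the whole pencil, hence preserves every conic through $A\cap D$, in particular both $D$ and $A$. Thus $G_A\supseteq V_4$ for every smooth $A$ transverse to $D$: the general stabilizer has order four, the image of your $\rho$ is exactly $V_4$, and the paper's final sentence (``since $A$ is general, the set of four points $A\cap D$ is general, hence $a$ is the identity'') overlooks precisely these elements. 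Your more cautious conclusion (finite stabilizer) is therefore the defensible one, but it should not be labelled generic faithfulness in the sense of a generically free action, and the subsequent $\mathbf P^9$-bundle argument has to be repaired, e.g.\ by accounting for the residual $V_4$-action on the fibres $\vert\mathcal I_A(2)\vert$ or by rigidifying $\mathbb A$ with extra data that kills $V_4$.
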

\begin{proof} Let $G_A \subset G$ be the stabilizer of $A$ and $a \in G_A$. Consider $A$ as a general conic in $\mathbf P^2$. Then $a$ acts  as an automorphism of $\mathbf P^2$ such that $a(A) = A$ and $a(D) = D$. 
Hence it follows $a(A \cap D) = A \cap D$. But, since $A$ is general, the set of four points $A \cap D$ is general. Hence $a$ is the identity. \end{proof}
Let $\overline {\mathbb P}  := \mathbb P / G$ and $\overline{\mathbb A} := \mathbb A / G$, it follows from the previous lemmas that $\alpha: \mathbb P \to \mathbb A$ descends, over a non empty open set of $\overline {\mathbb A}$, to a $\mathbb P^9$-bundle
$$
\overline {\alpha}: \overline{\mathbb P} \to \overline {\mathbb A}.
$$
Since $\overline {\mathbb A}$ is rational, then $\overline {\mathbb P}$ is rational. Now the moduli map $m: \mathbb P \to \mathcal F^N_8$ descends to a rational map
$$
\overline m: \overline{\mathbb P} \to \mathcal F^N_8.
$$
Moreover lemma 7.1 implies that $\overline m$ is generically injective. Since $\overline {\mathbb P}$ and $\mathcal F^N_8$ are integral of the same dimension, it follows that $\overline m$ is birational.
This completes the proof of the next theorem.
\begin{theorem}ÊThe moduli space of genus 8 Nikulin surfaces is rational. \end{theorem}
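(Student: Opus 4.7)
The plan is to exploit the family $\alpha \colon \mathbb{P} \to \mathbb{A}$ built in Section 6, together with the moduli map $m \colon \mathbb{P} \to \mathcal{F}^N_8$ sending a pair $(S,A)$ to the Nikulin surface $S$ with its induced polarization $\mathcal{L} = \mathcal{H}(A)$ (so that $2M \sim N_1 + \dots + N_8$ are cut out by the fake K3 $S'_A$ as in Theorem 6.6). The strategy is to quotient by the natural symmetry group of the construction and then exhibit the resulting quotient as a projective bundle over a rational surface.

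The first step is to analyse the fibres of $m$. Since the Mukai bundle $\mathcal{E}$ is uniquely determined by the genus-$6$ polarization $\mathcal{H}$ and coincides with the restriction of the universal bundle on $G(1,4)$, any isomorphism $f \colon S_1 \to S_2$ identifying the triples $(\mathcal{L}_i,\mathcal{H}_i,\mathcal{A}_i)$ lifts to a linear automorphism $a \in \PGL(5) = \Aut G(1,4)$ carrying $(S_1,A_1)$ to $(S_2,A_2)$. This gives the characterization that two general pairs have the same image under $m$ iff they lie in the same orbit of the subgroup $G \subset \PGL(5)$ leaving $\mathbb{A}$ invariant. Because the union $\bigcup_{A \in \mathbb{A}} A$ is exactly the family of bisecant lines to the rational normal quartic $B$, such an $a$ must preserve $B$, and hence $G \cong \Aut B = \PGL(2)$.

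Next I would descend the picture modulo $G$. Viewing $\mathbb{A} = |\mathcal{O}_{\mathbf{P}^2}(2)|$ with $\mathbf{P}^2 = \Hilb_2(B)$, the action of $G$ coincides with the action of $\Aut D$ on conics in $\mathbf{P}^2$ preserving the diagonal $D$; since four general points of a smooth conic have trivial stabilizer, this action is generically faithful. Therefore the quotient $\overline{\mathbb{A}} := \mathbb{A}/G$ is an irreducible surface; being unirational of dimension two it is rational by Castelnuovo's theorem. The action of $G$ lifts equivariantly to $\mathbb{P}$, and on a sufficiently small $G$-stable open set the quotient $\overline{\mathbb{P}} := \mathbb{P}/G$ acquires the structure of a Zariski $\mathbf{P}^9$-bundle $\overline{\alpha} \colon \overline{\mathbb{P}} \to \overline{\mathbb{A}}$ by standard descent; consequently $\overline{\mathbb{P}}$ is rational.

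Finally, $m$ descends to a rational map $\overline{m} \colon \overline{\mathbb{P}} \to \mathcal{F}^N_8$ which, by the fibre analysis above, is generically injective. A dimension count shows that the source has dimension $2 + 9 = 11$, matching the dimension of $\mathcal{F}^N_8$, so $\overline{m}$ is birational and rationality of $\mathcal{F}^N_8$ follows. The main obstacle in this programme is not any single descent step but verifying that $\overline{m}$ is \emph{dominant}: concretely, that every general Nikulin surface $S$ of genus $8$ arises from some $(S,A) \in \mathbb{P}$. This requires showing that $S$ admits a special-type embedding $A \subset T$ in the sense of Theorem II, which hinges on the balanced splitting of the Mukai bundle restriction $\mathcal{E}_A = \mathcal{O}_{\mathbf{P}^1}(3) \oplus \mathcal{O}_{\mathbf{P}^1}(3)$ established in Theorem 3.5, together with the description of $\Sing R' = B$ as a rational normal quartic in the projection analysis of Section 4.
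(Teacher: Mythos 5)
Your proposal follows the paper's Section~7 argument essentially verbatim: the fibre analysis of $m$ via $\Aut G(1,4)$, the identification $G=\Aut B=\PGL(2)$ with its generically faithful action on $\mathbb{A}=\vert\mathcal O_{\mathbf P^2}(2)\vert$, descent to a $\mathbf P^9$-bundle over the rational surface $\overline{\mathbb{A}}$, and the conclusion by generic injectivity plus equal dimension. Your closing worry about dominance is already resolved by that last step --- a generically injective map between integral varieties of the same dimension is automatically dominant, which is exactly how the paper (and your own dimension count) closes the argument.
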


\end{document}